%
%
%
%
%

\documentclass[12pt]{amsart}

\usepackage[margin=1in,marginparwidth=0.8in, marginparsep=0.1in]{geometry}

\pdfoutput=1

\usepackage{amsthm,amsfonts,amsmath,amscd,amssymb,color,xcolor,graphicx}
\usepackage{epsfig}
\usepackage[all]{xy}
\usepackage{enumerate}
\usepackage{pinlabel}
\usepackage{hyperref}

\theoremstyle{plain}
\newtheorem{theorem}{Theorem}[section]

\newtheorem{proposition}[theorem]{Proposition}
\newtheorem{lemma}[theorem]{Lemma}
\theoremstyle{remark}
\newtheorem{remark}[theorem]{Remark}

\theoremstyle{definition}
\newtheorem{definition}[theorem]{Definition}

\def\R{\mathbb{R}}
\def\A{\mathcal{A}}
\def\M{\mathcal{M}}
\def\Z{\mathbb{Z}}
\def\d{\partial}
\def\LCH{\operatorname{LCH}}
\def\ix{\operatorname{ind}}
\def\F{\mathcal{F}}
\def\coker{\operatorname{coker}}
\def\Cord{\operatorname{Cord}}

\def\B{F}
\def\C{C}
\definecolor{orange}{HTML}{FFA500}
\hyphenation{Wein-stein}

\title{A complete knot invariant from contact homology}
\author{Tobias Ekholm}
\author{Lenhard Ng}
\author{Vivek Shende}

\begin{document}

\begin{abstract}
We construct an enhanced version of knot contact homology,
and show that we can deduce from it the group ring of the knot group together with
the peripheral subgroup.   In particular, it completely
determines a knot up to smooth isotopy.  The enhancement consists of
the (fully noncommutative) Legendrian contact homology associated to the union of the conormal torus of the knot and a disjoint cotangent fiber sphere, along with a product on a filtered part of this homology.
As a corollary, we obtain a
new, holomorphic-curve proof of a result of the third author that the
Legendrian isotopy class of the conormal torus is a complete knot invariant. 
\end{abstract}

\maketitle

\section{Introduction}
\label{sec:intro}

\subsection{Conormal tori and knot contact homology}

A significant thread in recent research in symplectic and contact topology has concerned the study of smooth manifolds through the symplectic structures on their cotangent bundles. 
In this setting, one can also study a pair of manifolds, one embedded in the other---in particular, a knot in a $3$-manifold---via the conormal construction. If $K \subset \R^3$ is a knot,
then its unit conormal bundle, the conormal torus $\Lambda_K$,
is a Legendrian submanifold of the contact
cosphere bundle $ST^*\R^3$.
Isotopic knots produce conormal tori that are isotopic as Legendrian submanifolds,
i.e., the Legendrian isotopy type of the conormal torus is a knot invariant.
The fact that this invariant is nontrivial depends essentially on
the contact geometry:  the conormal tori of any two knots are smoothly isotopic, even if the knots themselves are not isotopic.

Symplectic field theory \cite{EGH} provides an algebraic knot invariant associated to this geometric invariant:
the Legendrian contact homology of $\Lambda_K$, also known as the
knot contact homology of $K$.
This is the homology of a differential graded algebra generated by Reeb chords of $\Lambda_K$ with differential given by counting holomorphic disks. In the past few years, there have been indications that knot contact homology and its higher genus generalizations are related via string theory to other knot invariants such as the A-polynomial, HOMFLY-PT polynomial, and possibly various knot homologies: the cotangent bundle equipped with Lagrangian branes along the conormal of the knot and the 0-section is the setting for an open topological string theory that has conjectural relations to all of these invariants. The physical account considers the holomorphic disks that go into knot contact homology, and also crucially takes into account higher genus information; this last part has not yet been fully developed in the mathematical literature, but some beginnings can be found in \cite{AENV}. In particular, it is explained there how certain quantum invariants should be conjecturally recovered from a quantization of knot contact homology arising from the consideration of non-exact Lagrangians. In any case, it appears that knot contact homology should be a very strong invariant, in the sense that it encodes a great deal of information about the underlying knot.

Recent work of the third author \cite{Shende} shows that the Legendrian conormal torus $\Lambda_K$ is in fact a complete invariant of $K$: two knots with Legendrian isotopic conormals must in fact be isotopic. Since $\Lambda_K$ is the starting point for knot contact homology,
this can be viewed as evidence for, or in any case is consistent with,
the possibility that knot contact homology itself is a complete invariant. Other
evidence in this direction is provided by the fact that knot contact homology recovers enough of the knot group (the fundamental group of the knot complement) to detect the unknot \cite{Ngframed} and torus
knots, among others \cite{GLid}. These results use the ring structure on the ``fully noncommutative'' version of knot contact homology, where the algebra is generated by
Reeb chords along with homology classes in $H_1(\Lambda_K)$ that do not commute with Reeb chords; see \cite{Ngsurvey,CELN}. However, the question of whether knot contact homology is a complete invariant remains open.

\subsection{Main results}

In this paper, we present an extension of knot contact homology
 by slightly enlarging the set of holomorphic disks that are counted.
We will show that this extension, which we call \textit{enhanced knot contact homology},
contains the knot group along with the peripheral subgroup, and this in turn is enough to completely determine the knot \cite{Wal,GL}. As a corollary, we have a new proof of the result from \cite{Shende}, using holomorphic curves rather than constructible sheaves.

For our purposes, we need the Legendrian contact homology of not just $\Lambda_K$
but the union of $\Lambda_K$ and a cotangent fiber $\Lambda_p$ of
$ST^*\R^3$; the inclusion of the latter is analogous to choosing a
basepoint for the fundamental group. This new invariant,
$\LCH_*(\Lambda_K \cup \Lambda_p)$, is a ring that
contains the knot contact homology of $K$ as a quotient. Using
the ``link grading'' of Mishachev \cite{Mishachev}, we can write:
\[
\LCH_*(\Lambda_K \cup \Lambda_p) \cong
(\LCH_*)_{\Lambda_K,\Lambda_K} \oplus
(\LCH_*)_{\Lambda_K,\Lambda_p} \oplus
(\LCH_*)_{\Lambda_p,\Lambda_K} \oplus
(\LCH_*)_{\Lambda_p,\Lambda_p},
\]
where $(\LCH_*)_{\Lambda_i,\Lambda_j}$ denotes the homology of the
subcomplex generated by composable words of Reeb chords ending on
$\Lambda_i$ and beginning on $\Lambda_j$; see Section~\ref{sec:enhanced}
for details.

From this set of data, we pick out what we
call the \textit{KCH-triple} $(R_{KK},R_{Kp},R_{pK})$ associated to
$\Lambda_K \cup \Lambda_p$, defined by:
\begin{align*}
R_{KK} &= (\LCH_0)_{\Lambda_K,\Lambda_K}, &
R_{Kp} &= (\LCH_0)_{\Lambda_K,\Lambda_p}, &
R_{pK} &= (\LCH_1)_{\Lambda_p,\Lambda_K}.
\end{align*}
Of these, $R_{KK}$ is precisely the degree $0$ knot contact
homology of $K$ and contains a subring $\Z[l^{\pm 1},m^{\pm 1}]$ once we
equip $K$ with an orientation and framing (which we choose to be the
Seifert framing), where $l,m$ denote the longitude and meridian of $K$; $R_{Kp}$ and $R_{pK}$ are left and right modules,
respectively, over $R_{KK}$. We remark that $(\LCH_*)_{\Lambda_K,\Lambda_K}$, $(\LCH_*)_{\Lambda_K,\Lambda_p}$, and $(\LCH_*)_{\Lambda_p,\Lambda_K}$ turn out to be supported in degrees $\geq 0$, $0$, and $1$, respectively, and so the KCH-triple is comprised of the lowest-degree summand of each.

We need one further piece of data in addition to
the KCH-triple: a product $\mu :\thinspace R_{Kp} \otimes R_{pK} \to
R_{KK}$. While the differential in $\LCH_*(\Lambda_K \cup \Lambda_p)$
counts holomorphic disks in the symplectization $\R \times ST^*\R^3$
with boundary on $\R\times (\Lambda_K \cup \Lambda_p)$ and one
positive puncture at a Reeb chord of $\Lambda_K \cup \Lambda_p$, the
product $\mu$ counts holomorphic disks with \textit{two} positive punctures, at
mixed Reeb chords of $\Lambda_K \cup \Lambda_p$. Extending
Legendrian contact homology to ``Legendrian Rational Symplectic Field Theory''
by counting disks with multiple positive punctures has not yet been
successfully implemented in general;
the difficulty comes from
boundary breaking for holomorphic disks, which contributes to the codimension-$1$ strata of moduli spaces. However, partial results in this direction have been
obtained by the first author \cite{E08} in the case of
multiple-component Legendrian links when boundary breaking can be avoided for topological reasons, and (with less relevance for our
purposes) by the second author \cite{NgSFT} in complete generality in the case of Legendrian
knots in $\R^3$. In particular, the fact that $\mu$ is well-defined and invariant follows from \cite{E08}.

Our main result is now as follows:

\begin{theorem}
Let $K \subset \R^3$ be an oriented knot and $p \in \R^3\setminus K$ be a point,
and let $\Lambda_K$, $\Lambda_p$ denote the Legendrian submanifolds of
$ST^*\R^3$ given by the unit conormal torus to $K$ and the unit
cotangent fiber over $p$.
\label{thm:main}
Then the KCH-triple $(R_{KK},R_{Kp},R_{pK})$ constructed from the
Legendrian contact homology of $\Lambda_K \cup \Lambda_p$, equipped with
the product $\mu :\thinspace R_{Kp} \otimes R_{pK} \to R_{KK}$,
is a complete invariant for $K$.

More precisely, if there is an isomorphism between the KCH-triples for
two oriented knots $K_0,K_1$ that preserves $\mu$, then:
\begin{enumerate}
\item
$K_0$ and $K_1$ are smoothly isotopic
 \label{it:main1}
up to mirroring and orientation reversal;
\item
if the isomorphism from $R_{K_0K_0}$ to $R_{K_1K_1}$ restricts to the
identity map on the subring
$\Z[l^{\pm 1},m^{\pm 1}]$,
\label{it:main2}
then $K_0$ and $K_1$ are smoothly isotopic
as oriented knots.
\end{enumerate}
\end{theorem}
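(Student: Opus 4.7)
The plan is to build a topological model of the KCH-triple together with $\mu$, identify it with the group ring $\Z[\pi_1(\R^3\setminus K, p)]$ equipped with its peripheral subgroup, and invoke the theorems of Waldhausen \cite{Wal} and Gordon--Luecke \cite{GL}.

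First I would generalize the cord-algebra description of degree $0$ knot contact homology from \cite{CELN, Ngframed} to the two-component Legendrian $\Lambda_K \cup \Lambda_p$. Topologically the generators should be cords: paths in $\R^3$ with both endpoints on $K$ for $R_{KK}$, from $K$ to $p$ for $R_{Kp}$, and from $p$ to $K$ for $R_{pK}$, modulo the standard skein relations pushing cord endpoints across $K$, together with extra generators from $H_1(\Lambda_K)$ decorating the endpoints on $K$. Concatenating a $Kp$-cord with a $pK$-cord at $p$ yields a $KK$-cord, and this should match the geometric product $\mu$. The identification of this topological model with the holomorphic invariant is to be achieved by a neck-stretching/SFT argument along cospheres above $K$ and $p$, extending \cite{CELN} to mixed-endpoint chords and to disks with two positive punctures along the lines of \cite{E08}.

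Once the topological model is in place, the algebraic extraction of the knot group becomes manageable: a $pK$-cord concatenated with a $Kp$-cord at $K$ is a based loop at $p$ in $\R^3\setminus K$, so $\Z[\pi_1(\R^3\setminus K, p)]$ can be recovered from $R_{pK}$ and $R_{Kp}$ using their bimodule structure over $R_{KK}$ and the product $\mu$, with the cord skein relations translating into the group-theoretic relations identifying the small meridian with $m$. The peripheral subgroup then sits inside as the image of the subring $\Z[l^{\pm 1}, m^{\pm 1}] \subset R_{KK}$ under the natural ring map $R_{KK} \to \Z[\pi_1]$. An isomorphism of KCH-triples preserving $\mu$ therefore yields an isomorphism of knot groups carrying peripheral subgroup to peripheral subgroup, and by Waldhausen \cite{Wal} together with Gordon--Luecke \cite{GL} this is induced by a homeomorphism of $S^3$ taking $K_0$ to $K_1$ up to mirror and orientation reversal. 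Fixing $l$ and $m$ in the isomorphism pins down orientation and chirality, giving part (ii).

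The main obstacle will be the holomorphic-curve content of the second step. I need to analyze moduli spaces of disks on $\Lambda_K \cup \Lambda_p$ with one or two positive punctures and show that in the stretched limit they reduce to the combinatorial cord configurations above. The product $\mu$ counts disks with two positive punctures, and the generic SFT obstruction to such a product being well-defined is boundary-disk bubbling at mixed Reeb chords; here it should be excludable because $\Lambda_K$ and $\Lambda_p$ are disjoint and carry distinct homology classes, which is essentially the setting treated by \cite{E08}. A secondary subtlety will be formulating the cord skein relations for the mixed modules $R_{Kp}$ and $R_{pK}$ in exactly the form needed for the algebraic extraction of $\pi_1$ in the third step; this is where the specific homological degrees $0,0,1$ picking out the KCH-triple, and the decoration of $K$-endpoints by $H_1(\Lambda_K)$, enter in an essential way.
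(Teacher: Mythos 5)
Your overall route---cord-algebra model of the KCH-triple, identification with the group ring of $\pi_1(\R^3\setminus K)$ via concatenation at $p$, then Waldhausen---is the same as the paper's. But there is a genuine gap at the pivotal algebraic step, hidden in the word ``therefore'' in your sentence ``An isomorphism of KCH-triples preserving $\mu$ therefore yields an isomorphism of knot groups carrying peripheral subgroup to peripheral subgroup.'' What the KCH-triple with $\mu$ gives you is a \emph{ring} isomorphism $\Z[\pi_1(\R^3\setminus K_0)]\to\Z[\pi_1(\R^3\setminus K_1)]$ (via $\Z\oplus R_{pp}\cong\Z[\pi_1]$); a ring isomorphism of group rings does not in general come from a group isomorphism. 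The paper closes this gap by citing that knot groups are left orderable (Howie--Short) and that for a left-orderable group $G$ the units of $\Z[G]$ are exactly $\pm g$ (Higman), so the ring isomorphism forces a group isomorphism $\psi$ with $\psi_{pp}(\gamma)=\pm\psi(\gamma)$. Without some such input your argument does not produce a group isomorphism at all, and nothing in your sketch supplies it.

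A second, related gap is the recovery of the peripheral structure. You assert the peripheral subgroup ``sits inside as the image of $\Z[l^{\pm1},m^{\pm1}]\subset R_{KK}$,'' but $R_{KK}$ maps onto the proper subring $\hat R+R(1-m)$ of $R=\Z[\pi_1]$, not onto $R$ itself, and the induced group isomorphism $\psi$ need not a priori respect this subring in a way that pins down $m$ and $l$. The paper extracts the peripheral data by an explicit computation: writing $\psi_{Kp}(1)=x$, $\psi_{pK}(1-m_0)=x'(1-m_1)$, using $\mu$ and the absence of zero divisors in $\Z[\pi_1]$ (again from left-orderability) to get $xx'=1$, hence $x=\pm\gamma$, and then comparing the binomials $\psi_{pp}(l_0^\alpha(1-m_0))$ for $\alpha=0,1$ to conclude $\psi(m_0)=\gamma^{-1}m_1\gamma$, $\psi(l_0)=\gamma^{-1}l_1\gamma$. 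For part (1), where the isomorphism is not assumed to fix $l,m$, an additional lemma is needed (if $z(1-m)=1-g$ in $\Z[G]$ with $G$ left orderable then $g=m^n$), combined with the Loop Theorem, to force $m_0\mapsto m_1^{\pm1}$, $l_0\mapsto l_1^{\pm1}$. Note also that because this argument identifies the meridians themselves, Waldhausen alone suffices and Gordon--Luecke is not needed. The holomorphic-curve portions of your sketch (neck-stretching to cords, excluding boundary breaking for the two-positive-puncture disks because the punctures are mixed) do match the paper.
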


A Legendrian isotopy between $\Lambda_{K_0} \cup \Lambda_p$ and $\Lambda_{K_1} \cup \Lambda_p$ induces an isomorphism between the KCH-triples that respects the product $\mu$. Since $\R^3$ is noncompact, any Legendrian isotopy between the conormal tori $\Lambda_{K_0}$ and $\Lambda_{K_1}$ can be extended to an isotopy between $\Lambda_{K_0} \cup \Lambda_p$ and $\Lambda_{K_1} \cup \Lambda_p$ by pushing $p$ away from the (compact) support of the isotopy. Thus we deduce from Theorem~\ref{thm:main} a new proof of the following result.

\begin{theorem}[\cite{Shende}]
Let $K_0,K_1$ be smooth knots in $\R^3$
\label{thm:complete}
and let $\Lambda_{K_0},\Lambda_{K_1}$ denote their conormal tori.
\begin{enumerate}
\item \label{it:cor1}
If $\Lambda_{K_0}$ and $\Lambda_{K_1}$ are Legendrian isotopic, then $K_0$ and $K_1$ are smoothly isotopic up to mirroring and orientation reversal.
\item \label{it:cor2}
If $\Lambda_{K_0}$ and $\Lambda_{K_1}$ are parametrized Legendrian isotopic, then $K_0$ and $K_1$ are smoothly isotopic as oriented knots.
\end{enumerate}
\end{theorem}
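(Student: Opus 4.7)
The plan is to derive Theorem~\ref{thm:complete} as a direct corollary of Theorem~\ref{thm:main}; the maneuver, already sketched in the paragraph preceding the statement, is to promote a Legendrian isotopy of conormal tori to a Legendrian isotopy of the full enhanced configuration $\Lambda_K \cup \Lambda_p$. Concretely, given an ambient contact isotopy $\Phi_t$ of $ST^*\R^3$ realizing $\Phi_1(\Lambda_{K_0}) = \Lambda_{K_1}$, I would use that $K_0, K_1$ and the support of the isotopy are all compact subsets of $\R^3$ to pick a base point $p \in \R^3$ lying over a region untouched by $\Phi_t$. Then $\Lambda_p \subset ST^*\R^3$ remains disjoint from $\Phi_t(\Lambda_{K_0})$ for all $t$, and $\Phi_t$ carries $\Lambda_{K_0} \cup \Lambda_p$ to $\Lambda_{K_1} \cup \Lambda_p$ through a Legendrian isotopy fixing $\Lambda_p$ pointwise.

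The second step is to pass this isotopy through the invariance of the enhanced data. The standard isotopy invariance of $\LCH_*$ respects the link grading of \cite{Mishachev}, hence yields an isomorphism of each link-graded summand, and in particular of the KCH-triple $(R_{KK}, R_{Kp}, R_{pK})$. I then need the induced isomorphism to intertwine the product $\mu$. This is invariance for a moduli of holomorphic disks with two positive punctures, which cannot be set up in general, but both punctures are here of mixed type (one on $\Lambda_K$, one on $\Lambda_p$), so the potentially obstructing boundary-bubble degenerations on a single component are ruled out by the link grading and invariance falls squarely into the framework of \cite{E08}. I expect this to be the main technical point: all necessary ingredients are already in place in the cited work, but it is the one input that is not a black-box consequence of already-established Legendrian contact homology invariance.

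With these two steps in hand, part~\eqref{it:cor1} is immediate from Theorem~\ref{thm:main}\eqref{it:main1}. For part~\eqref{it:cor2}, I would additionally observe that a parametrized Legendrian isotopy, by definition, induces the identity map on $H_1(\Lambda_K) \cong \Z^2$ with respect to the longitude-meridian basis fixed by the knot orientation and the Seifert framing. Since these homology classes are identified with the generators $l, m$ of the distinguished subring $\Z[l^{\pm 1}, m^{\pm 1}] \subset R_{KK}$, the induced isomorphism restricts to the identity on $\Z[l^{\pm 1}, m^{\pm 1}]$, so the hypothesis of Theorem~\ref{thm:main}\eqref{it:main2} is met and the oriented smooth isotopy between $K_0$ and $K_1$ follows.
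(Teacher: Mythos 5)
Your proposal is correct and follows essentially the same route as the paper: the paper likewise deduces Theorem~\ref{thm:complete} from Theorem~\ref{thm:main} by pushing $p$ away from the compact support of the isotopy so that it extends to a Legendrian isotopy of $\Lambda_{K_0}\cup\Lambda_p$ to $\Lambda_{K_1}\cup\Lambda_p$, and then invokes the invariance of the KCH-triple and of $\mu$ (Propositions~\ref{prop:KCH-triple} and the $\mu$-invariance statement in Section~\ref{ssec:product}, resting on \cite{E08}). Your identification of the product invariance as the one non-black-box input, and your handling of the parametrized case via the $l,m$ basis, match the paper's treatment.
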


\noindent
Here ``parametrized Legendrian isotopic'' means the following: each conormal torus $\Lambda_K$ of an oriented knot $K$ has two distinguished classes in $H_1(\Lambda_K)$ given by the meridian and Seifert-framed longitude, and a parametrized Legendrian isotopy between conormal tori is an isotopy that sends meridian and longitude to meridian and longitude. 

Our proof of Theorem \ref{thm:main} depends crucially on the
results of \cite{CELN}, which relates knot contact homology to string topology. It is shown there that one can construct an isomorphism from degree $0$ knot contact homology, $\LCH_0(\Lambda_K)$, to a certain string homology constructed from paths (``broken strings'') on the singular Lagrangian given by the union, inside the cotangent bundle, of the zero section and the conormal. This isomorphism is induced by mapping a Reeb chord to the chain of boundaries of all holomorphic disks asymptotic to the Reeb chord with boundary on the singular Lagrangian.

In this paper, we extend the isomorphism from \cite{CELN} to show that the KCH-triple can also be computed using broken strings.
Using this presentation, we prove a ring isomorphism
\[
\Z[\pi_1(\R^3\setminus K)] \cong \Z \oplus (R_{pK} \otimes_{R_{KK}} R_{Kp})
\]
where multiplication on the right is induced by the product $\mu :\thinspace R_{Kp} \otimes R_{pK} \to R_{KK}$ (see Section~\ref{ssec:kchtriple} for details).
Knot groups are known to be left orderable \cite{HowieShort}, i.e., they have a total ordering invariant under left multiplication, and left orderable groups are determined by their group ring \cite{Higman}; it follows that we can recover the knot group itself from the KCH-triple. A further consideration of the subring $\Z[l^{\pm 1},m^{\pm 1}]$, which sits naturally in enhanced knot contact homology (more precisely, in $R_{KK}$), shows that we can also recover the longitude and meridian inside the knot group, and thus by \cite{Wal} we have a complete knot invariant.

We emphasize that the extra cosphere fiber is critical for our
argument. It is shown in \cite{CELN} that knot contact homology
$\LCH_0(\Lambda_K)$ is isomorphic to a certain subring of
$\Z[\pi_1(\R^3\setminus K)]$, and this can be used to prove that
$\LCH_0(\Lambda_K)$ detects the unknot and torus knots, as mentioned
earlier. It is not clear whether this subring suffices to give a
complete invariant. By contrast, the extra cosphere fiber allows the
direct recovery of $\Z[\pi_1(\R^3\setminus K)]$ and thus
$\pi_1(\R^3\setminus K)$.

\subsection{Relation to sheaves}

We conclude this introduction by sketching a Floer-theoretic path from the arguments
of \cite{Shende} to those of the present work.  The body of the paper does not depend
on any of the claims below; we include them solely for motivational purposes and conceptual clarity.
These claims could be established rigorously by a variant of \cite{BEE} in the partially wrapped context (a special case of \cite[Conjecture 3]{EL}), together with a 
proof of Kontsevich's localization conjecture \cite{kontsevich2009symplectic}.  
A significantly more detailed sketch of the following arguments appears in 
section 6 of the arXiv version of the present paper, \href{http://arxiv.org/abs/1606.07050}{arXiv:1606.07050}.

In \cite{Shende} the basic tool is the category of sheaves on $\R^3$, constructible with respect
to the stratification by the knot $K$ and its complement $\R^{3}\setminus K$.  This category is identified with the \emph{infinitesimal Fukaya category}
whose objects are, roughly, exact Lagrangians in $T^* \R^3$ asymptotic to the conormal torus $\Lambda_{K}\subset ST^{\ast}\R^{3}$, and whose
morphisms are the intersections between the Lagrangians after perturbing infinitesimally along the Reeb flow at infinity \cite{Nadler-Zaslow, Nad}.  

There is another Floer-theoretic category one can associate to the same geometry, the \emph{partially wrapped category with wrapping stopped by $\Lambda_{K}$}. Here the objects are exact Lagrangians 
asymptotic to Legendrian submanifolds in $ST^{\ast} \R^{3}$, in the complement of the conormal torus $\Lambda_{K}$. The morphisms are computed by wrapping using 
a Reeb flow which stops at $\Lambda_{K}$ in the sense of \cite{Sylvan}. (A cut and paste model of the Reeb flow is obtained by attaching $T^{\ast}([0,\infty)\times \Lambda_{K})$ to $ST^{\ast}\R^{3}$ along $\Lambda_{K}$, see \cite[Section B.3]{EL}.) 

The infinitesimally wrapped category embeds into the partially wrapped category: pushing a Lagrangian asymptotic to $\Lambda_{K}$ slightly backwards along the Reeb flow gives a Lagrangian with trivial wrapping at infinity. To see this, note that the Reeb flow starting at the shifted $\Lambda_{K}$ arrives immediately at the stop $\Lambda_{K}$ and hence will flow no further.  The image
of this embedding is expected to be categorically characterized as the ``pseudo-perfect modules''.  
In particular, the partially wrapped category should know at least as much as the sheaf category.  

Two notable objects of the partially wrapped category are the cotangent fiber $\B$ at a point not on the knot and the Lagrangian disk $\C$ which fills a small ball linking the conormal torus. (In the cut and paste model, $\C$ is a cotangent fiber in $T^{\ast}([0,\infty)\times\Lambda_{K})$.) Taking Hom with these Lagrangians gives functors
from the partially wrapped category, hence by restriction followed by the Nadler-Zaslow isomorphism,
from the sheaf category, to chain complexes.  

In fact, the partially wrapped category also has a conjectural identification with a certain
category of sheaves \cite{kontsevich2009symplectic, Nad-wrapped}.  Under these identifications, the functor associated to $\B$ is computing the stalk at the point away from the knot, and the functor associated to $\C$ is computing the microsupport of the sheaf at the knot.  
These are the main operations used in \cite{Shende} and having both is crucial to the argument there.  

It is also expected 
that the Lagrangians $\B$ and $\C$ generate the partially wrapped Fukaya category, i.e., the partially wrapped category can be identified with the category
of perfect modules over the endomorphism algebra of $F\cup C$. This means that the partially wrapped Floer cohomology $HW^{\ast}(\B\cup \C,\B\cup \C)$ of these two disks should contain all the information of the sheaf category, and
moreover in a way which makes the information needed in the arguments of \cite{Shende} immediately accessible.

Both wrapped Floer cohomology and Legendrian contact homology are algebras on Reeb chords; a precise relation between
them is established in \cite{BEE} and generalized to the partially wrapped context in \cite[Conjecture 3]{EL}. Specifically, the partially wrapped Floer cohomologies of the disks $\B$ and $\C$ can be computed from contact homology algebras and in the notation above, we have:
\begin{align*}
HW^1(\C,\C) &\cong R_{KK}, \\
HW^1(\C,\B) &\cong R_{pK}, \\
HW^1(\B,\C) &\cong R_{Kp}, \\
HW^1(\B,\B) &\cong \Z \oplus (R_{pK} \otimes R_{Kp}).
\end{align*}
Moreover, the product $\mu :\thinspace R_{Kp} \otimes R_{pK} \to R_{KK}$ is identified with
the ordinary pair-of-pants product $HW^{1}(\B,\C)\otimes HW^{1}(\C,\B)\to HW^{1}(\C,\C)$ in wrapped Floer cohomology. Then the KCH-triple and product determines a ring structure on $HW^1(\B,\B)$, and our results show that $HW^1(\B,\B)$ is ring isomorphic to the group ring $\Z[\pi_1(\R^3\setminus K)]$.

In fact, this ring isomorphism can be induced from moduli spaces of holomorphic disks as follows. Applying Lagrange surgery to $\R^3\cup L_K$ (i.e., removing the interiors of small disk bundle neighborhoods of $K$ in $\R^{3}$ and $L_{K}$ and joining the resulting boundary fiber circles over $K$ by a family of $1$-handles), we obtain a Lagrangian $M_K$ with the topology of $\R^{3}\setminus K$. The disk $\B$ intersects $M_K$ transversely in one point and the map above is induced from moduli spaces of holomorphic disks with one positive puncture at a Reeb chord of $\B$, two Lagrangian intersection punctures at $M_K\cap \B$, and boundary on $\B\cup M_K$. This map is then directly analogous to the corresponding map in the cotangent bundle of a closed manifold and, as there, it gives an isomorphism of rings, intertwining the pair of pants product in wrapped Floer chomology with the Pontryagin product on chains of loops. 

\subsection{Outline of the paper}

In Section~\ref{sec:enhanced}, we introduce enhanced knot contact homology and the KCH-triple, along with the product map $\mu$. We reformulate these structures in terms of string topology in Section~\ref{sec:stringtop} and then in terms of the knot group in Section~\ref{sec:homotopy}, leading to a proof of Theorem~\ref{thm:main} in Section~\ref{sec:mainpf}. In Section~\ref{sec:wrapped}, we discuss the relation to wrapped Floer cohomology and sheaves. Please note that Section~\ref{sec:wrapped}, which is speculative in nature, is included in the arXiv version of this paper, but not in the published version.

\subsection*{Acknowledgments}

TE was supported by the Knut and Alice Wallenberg Foundation and the Swedish Research Council.
LN was partially supported by NSF grant DMS-1406371 and a grant from the Simons Foundation (\# 341289 to Lenhard Ng).
VS was partially supported by NSF grant DMS-1406871 and a Sloan Fellowship.


\section{Enhanced Knot Contact Homology}
\label{sec:enhanced}
In this section we present the ingredients of enhanced knot contact homology. In Section \ref{ssec:LCHlink} we discuss the structure of the contact homology algebra of a two component Legendrian link, in Section \ref{ssec:enhanced} we specialize to the case of a link consisting of the conormal of a knot and the fiber sphere over a point. Finally, in Section \ref{ssec:product} we introduce the product operation on enhanced knot contact homology.

\subsection{Legendrian contact homology for a link}
\label{ssec:LCHlink}

Let $V = J^1(M)$ be the $1$-jet space of a compact manifold $M$ with
the standard contact structure, and let $\Lambda \subset V$ be a connected
Legendrian submanifold.
The Legendrian contact homology of $\Lambda$, which we will write as $\LCH_*(\Lambda)$, is the homology of a
differential graded algebra $(\A_\Lambda,\d)$, where $\A_\Lambda$ is a
noncommutative unital algebra generated by Reeb chords of $\Lambda$
and homology classes in $H_1(\Lambda)$, with the differential given by
a count of certain holomorphic curves in the symplectization $\R\times V$ with
boundary on $\R\times\Lambda$. 

\begin{remark}
For a Legendrian submanifold $\Lambda$ of a general contact manifold $V$ the Legendrian algebra $\A_{\Lambda}$ is an algebra generated by both Reeb chords and closed Reeb orbits, where the orbits generate a (super)commutative subalgebra. In the case of a 1-jet space there are no closed Reeb orbits and the algebra and its differential involves chords only.
\end{remark}

\begin{remark}\label{r:coeffs}
Legendrian contact homology is often defined with coefficients in
the group ring of $H_2(V,\Lambda)$ rather than $H_1(\Lambda)$, the
difference being whether one associates to a holomorphic disk its
relative homology class in $H_2(V,\Lambda)$ or the homology class of
its boundary in $H_1(\Lambda)$. In the case of knot contact
homology, our setup amounts to specializing to $U=1$ in the language
of \cite{EENStransverse,Ngtransverse,Ngsurvey} or $Q=1$ in the language of \cite{AENV}.
Also, as mentioned in the introduction, the version of the DGA that we
consider here is the fully noncommutative DGA, in which homology
classes in $H_1(\Lambda)$ do not commute with Reeb chords. To get loops rather than paths we fix a base point in each component of $\Lambda$ and capping paths connecting the base point to each Reeb chord endpoint, see Figure \ref{fig:disks}.
\end{remark}

If $\Lambda \subset V$ is a disconnected Legendrian submanifold, then
there is additional structure on the DGA of $\Lambda$ first
described by Mishachev \cite{Mishachev}; in modern language this is
the ``composable algebra'', and we follow the treatment from
\cite{BEE,EENS,NRSSZ}. For simplicity we restrict to
the case $\Lambda = \Lambda_1 \cup \Lambda_2$. For $i,j=1,2$, let
$\mathcal{R}^{ij}$ denote the set of Reeb chords that \textit{end} on
$\Lambda_i$ and \textit{begin} on $\Lambda_j$. The composable algebra
$\A_{\Lambda_1 \cup \Lambda_2}$ is the noncommutative $\Z$-algebra
generated by Reeb chords of $\Lambda_1 \cup \Lambda_2$, elements of
$\Z[H_1(\Lambda_1)]$, elements of $\Z[H_1(\Lambda_2)]$, and two
idempotents $e_1,e_2$, subject to
the relations (where $\delta_{ij}$ is the Kronecker delta):
\begin{itemize}
\item
$e_i e_j = \delta_{ij}$
\item
$e_{i'} a = \delta_{ii'} a$ and $a e_{j'} = \delta_{jj'} a$ for $a \in \mathcal{R}^{ij}$
\item
$e_j \alpha = \alpha e_j = \delta_{ij} \alpha$ for $\alpha \in
\Z[H_1(\Lambda_i)]$.
\end{itemize}
Note that $\A_{\Lambda_1 \cup \Lambda_2}$ is unital with unit
$e_1+e_2$. For $i,j=1,2$, define
$\A_{\Lambda_i,\Lambda_j} = e_i \A_{\Lambda_1 \cup \Lambda_2} e_j$;
then
\[
\A_{\Lambda_1 \cup \Lambda_2} = \bigoplus_{i,j\in\{1,2\}}
\A_{\Lambda_i,\Lambda_j}.
\]
In more concrete terms, $\A_{\Lambda_1 \cup \Lambda_2}$ is generated
as a $\Z$-module by monomials of the form
\[
\alpha_0 a_1 \alpha_1 a_2 \cdots a_n \alpha_n,
\]
where there is some sequence $(i_0,\ldots,i_n)$ with $i_k \in \{1,2\}$
such that $\alpha_k \in \Z[H_1(\Lambda_{i_k})]$ and $a_k \in
\mathcal{R}^{i_{k-1}i_k}$ for all $k$ (and one empty monomial $e_{i}$ for each component $\Lambda_{i}$). Monomials of this form are the ``composable
words''. Generators of
$\A_{\Lambda_i,\Lambda_j}$ are of the same form but specifically with $i_0=i$ and $i_n=j$.
Note that multiplication $\A_{\Lambda_i,\Lambda_j} \otimes
\A_{\Lambda_{i'},\Lambda_{j'}}\to \A_{\Lambda_{i},\Lambda_{j'}}$ is concatenation if $j=i'$ and $0$ otherwise.

\begin{figure}
\labellist
\small\hair 2pt
\pinlabel $a_1$ at 19 137
\pinlabel $a_2$ at 138 137
\pinlabel $a_3$ at  317 137
\pinlabel $a_4$ at 93 9
\pinlabel $a_5$ at 184 9
\pinlabel $a_6$ at 252 9
\pinlabel $a_7$ at 318 9
\pinlabel $a_8$ at 381 9
\pinlabel $\alpha_0$ at -6 99
\pinlabel $\alpha_1$ at 92 88
\pinlabel $\alpha_2$ at 123 37
\pinlabel $\alpha_3$ at 182 90
\pinlabel $\alpha_4$ at 265 89
\pinlabel $\alpha_5$ at 283 34
\pinlabel $\alpha_6$ at 347 40
\pinlabel $\alpha_7$ at 371 86
\pinlabel ${\color{blue} \Lambda_1}$ at 26 107
\pinlabel ${\color{blue} \Lambda_1}$ at 136 74
\pinlabel ${\color{blue} \Lambda_1}$ at 181 49
\pinlabel ${\color{blue} \Lambda_1}$ at 298 93
\pinlabel ${\color{blue} \Lambda_1}$ at 302 57
\pinlabel ${\color{blue} \Lambda_1}$ at 336 93
\pinlabel ${\color{green} \Lambda_2}$ at 126 102
\pinlabel ${\color{green} \Lambda_2}$ at 334 66
\endlabellist
\centering
\includegraphics[width=0.8\textwidth]{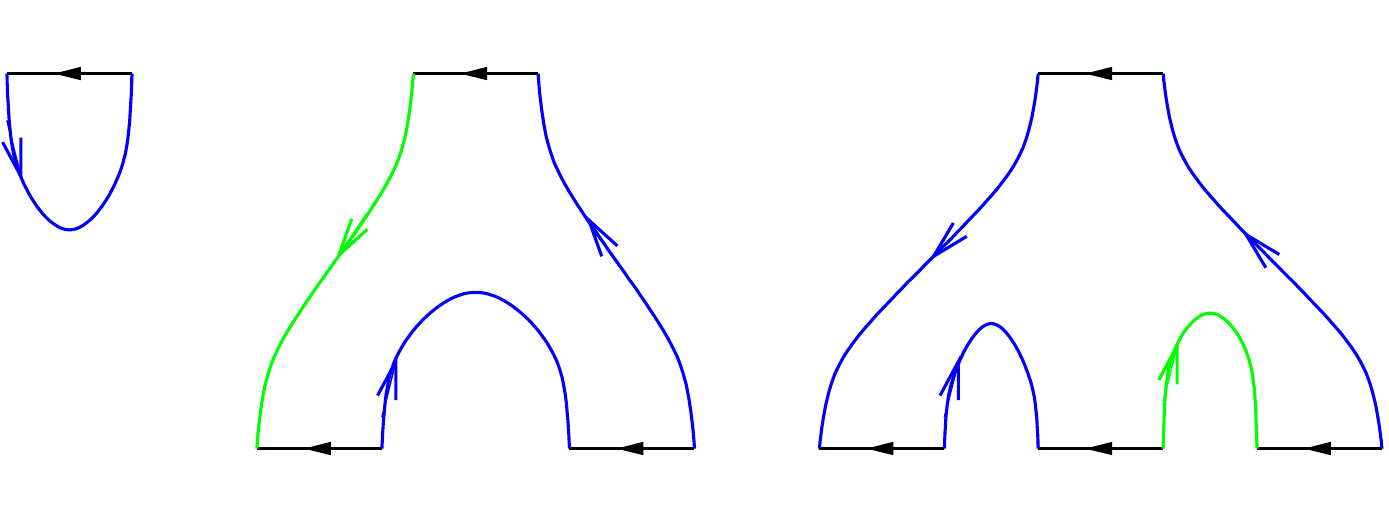}
\caption{
Terms contributing to the differential of $a_1 \in \mathcal{R}^{11}$, $a_2 \in \mathcal{R}^{21}$, $a_3 \in \mathcal{R}^{11}$: $\alpha_0$, $\alpha_1 a_4 \alpha_2 a_5 \alpha_3$, $\alpha_4 a_6 \alpha_5 a_7 \alpha_6 a_8 \alpha_7$, respectively. Here $a_5,a_6 \in \mathcal{R}^{11}$, $a_4,a_8 \in \mathcal{R}^{21}$, $a_7 \in \mathcal{R}^{12}$, $\alpha_1,\alpha_6 \in H_1(\Lambda_1)$, and $\alpha_0,\alpha_2,\alpha_3,\alpha_4,\alpha_5,\alpha_7 \in H_1(\Lambda_2)$. The boundaries of the disks lie on $\R\times \Lambda_1$ and $\R\times \Lambda_2$ as shown, and represent the homology classes indicated after closing up with suitably oriented capping paths, see Remark \ref{r:coeffs}. Small arrows denote orientations on Reeb chords.
}
\label{fig:disks}
\end{figure}

The differential $\partial$ on $\A_{\Lambda_1 \cup \Lambda_2}$
is defined to be $0$ on $e_i$ and on elements of $\Z[H_1(\Lambda_i)]$
and is given by a holomorphic-disk count for Reeb chords of $\Lambda_1
\cup \Lambda_2$. For a Reeb chord $a\in\mathcal{R}^{ij}$ the disks counted in $\partial a$ are maps into $\R\times V$ and have boundary on $(\R\times\Lambda_1)\cup(\R\times\Lambda_2)$, one positive puncture where it is asymptotic to $\R\times a$ and several negative punctures. The contribution to the differential is the composable word of homology classes and Reeb chords in the complement of the positive puncture along the boundary of the disk, see Figure~\ref{fig:disks}.
The differential thus respects the direct-sum decomposition $\bigoplus_{i,j\in\{1,2\}}
\A_{\Lambda_i,\Lambda_j}$, and this decomposition descends to the
homology:
\[
\LCH_*(\Lambda_1 \cup \Lambda_2) = H_*(\A_{\Lambda_1 \cup
  \Lambda_2},\partial) =
\bigoplus_{i,j\in\{1,2\}} (\LCH_*)_{\Lambda_i,\Lambda_j},
\]
where $(\LCH_*)_{\Lambda_i,\Lambda_j} = H_*(\A_{\Lambda_i,\Lambda_j},\partial)$.
Recall that Legendrian isotopies induce isomorphisms on Legendrian contact homology via counts of holomorphic disks similar to the differential, see \cite{E08,EES2,EHK}. It follows that a Legendrian isotopy between $2$-component Legendrian links induces a quasi-isomorphism between the DGAs that also respects the decomposition.

We can further refine the structure of $\A_{\Lambda_1 \cup \Lambda_2}$
by considering the filtration
\[
\A_{\Lambda_1 \cup \Lambda_2} = \F^0 \A_{\Lambda_1 \cup \Lambda_2}
\supset \F^1 \A_{\Lambda_1 \cup \Lambda_2} \supset \F^2 \A_{\Lambda_1
  \cup \Lambda_2} \supset \cdots
\]
where $\F^k \A_{\Lambda_1 \cup \Lambda_2}$ is the subalgebra generated
as a $\Z$-module by words involving at least $k$ mixed chords (Reeb
chords either from $\Lambda_1$ to $\Lambda_2$ or from $\Lambda_2$ to
$\Lambda_1$). This also gives a filtration on the
summands of $\A_{\Lambda_1 \cup \Lambda_2}$:
\begin{align*}
\A_{\Lambda_1,\Lambda_1} &= \F^0 \A_{\Lambda_1,\Lambda_1}
\supset \F^2 \A_{\Lambda_1,\Lambda_1} \supset
\F^4 \A_{\Lambda_1,\Lambda_1} \supset \cdots \\
\A_{\Lambda_1,\Lambda_2} &= \F^1 \A_{\Lambda_1,\Lambda_2}
\supset \F^3 \A_{\Lambda_1,\Lambda_2} \supset
\F^5 \A_{\Lambda_1,\Lambda_2} \supset \cdots \\
\A_{\Lambda_2,\Lambda_1} &= \F^1 \A_{\Lambda_2,\Lambda_1}
\supset \F^3 \A_{\Lambda_2,\Lambda_1} \supset
\F^5 \A_{\Lambda_2,\Lambda_1} \supset \cdots \\
\A_{\Lambda_2,\Lambda_2} &= \F^0 \A_{\Lambda_2,\Lambda_2}
\supset \F^2 \A_{\Lambda_2,\Lambda_2} \supset
\F^4 \A_{\Lambda_2,\Lambda_2} \supset \cdots.
\end{align*}

We note two properties of the filtration. First, it is compatible with
multiplication: the product of elements of $\F^{k_1}$ and $\F^{k_2}$ is
an element of $\F^{k_1+k_2}$. Second, the differential $\partial$ respects the
filtration, since the differential of any mixed chord is a sum of
words that each includes a mixed chord. As a consequence of this
second property, there is an induced filtration on $\LCH_*(\Lambda_1
\cup \Lambda_2)$ as well as its summands $(\LCH_*)_{\Lambda_i,\Lambda_j}(\Lambda_1
\cup \Lambda_2)$.

We abbreviate successive filtered quotients as follows: for $k$ even when $i=j$ and $k$ odd when $i\neq j$, write
\[
\A_{\Lambda_i,\Lambda_j}^{(k)} := \F^k \A_{\Lambda_i,\Lambda_j} / \F^{k+2} \A_{\Lambda_i,\Lambda_j}.
\]
Then $\A_{\Lambda_i,\Lambda_j}^{(k)}$ is generated as a $\Z$-module by words with exactly $k$ mixed chords. We will especially be interested in the following filtered quotients with their induced differentials:
\begin{itemize}
\item
$\A_{\Lambda_i,\Lambda_i}^{(0)}$, which is the DGA of $\Lambda_i$ itself;
\item
$\A_{\Lambda_i,\Lambda_j}^{(1)}$ with $i\neq j$, which is generated by words with exactly $1$ mixed chord.
\end{itemize}
Note that for $i\neq j$, the DGAs of $\Lambda_i$ and
of $\Lambda_j$ act on $\A_{\Lambda_i,\Lambda_j}^{(1)}$ on the left and right, respectively, by
multiplication, and this gives $\A_{\Lambda_i,\Lambda_j}^{(1)}$ the structure of a differential bimodule.

\subsection{Legendrian contact homology for the conormal and fiber}
\label{ssec:enhanced}

We now restrict to the case where $V$ is the contact manifold
$ST^*\R^3 = J^1(S^2)$. If $K \subset \R^3$ is a knot and $\Lambda_{K}
\subset V$
is the unit conormal bundle of $K$, then the knot contact
homology of $K$ is defined to be the Legendrian
contact homology of $\Lambda_K$:
\[
\LCH_*(\Lambda_K) = H_*(\A_{\Lambda_K},\d).
\]
The conormal $\Lambda_K$ is topologically a $2$-torus and has trivial Maslov class. The triviality of the Maslov class gives a well-defined integer grading on $\A_{\Lambda_{K}}$, by the Conley--Zehnder index, see \cite{EES2,EENS}. A choice of
orientation for $K$ gives a distinguished set of generators $l,m \in
\pi_1(\Lambda_K) \cong \Z^2$, where $m$ is the meridian and $l$ is the
Seifert-framed longitude. The group ring $\Z[H_1(\Lambda_K)] \cong
\Z[l^{\pm 1},m^{\pm 1}]$ is a subring of $\A_{\Lambda_K}$ in degree
$0$, and there is an induced map $\Z[l^{\pm 1},m^{\pm 1}] \to HC_0(K)$
that is injective as long as $K$ is not the unknot (see \cite{CELN}).

Since the Reeb flow on $ST^*\R^3$ is the geodesic flow, Reeb
chords correspond under the projection $ST^*\R^3 \to \R^3$
in a one to one fashion to oriented binormal chords of $K$: for the flat metric on $\R^{3}$ these are simply oriented line segments with endpoints on $K$ that
are perpendicular to $K$ at both endpoints. Furthermore the Conley--Zehnder grading of such a chord agrees with the Morse index for the corresponding critical point of the distance function $K \times K \to \R$, and hence takes on only the values $0,1,2$, see \cite[Section 3.3.3]{EENS}.

Next suppose that in addition to the knot $K$, we choose a point $p
\in \R^3 \setminus
K$. Then we can form the Legendrian link $\Lambda = \Lambda_K \cup
\Lambda_p \subset ST^*\R^3$, where $\Lambda_K$ is the unit conormal to $K$ as
before and $\Lambda_p$ is the unit cotangent fiber of $ST^*\R^3$ at
$p$.

Let $(\A_{\Lambda_K \cup \Lambda_p},\partial)$ be the DGA associated
to the link $\Lambda_K \cup \Lambda_p$. Then $\A_{\Lambda_K \cup \Lambda_p}$ is generated by Reeb
chords of $\Lambda_K \cup \Lambda_p$ (along with homology classes). There are no Reeb chords from
$\Lambda_p$ to itself, and so the Reeb chords of $\Lambda_K \cup
\Lambda_p$ come in three types:
from $\Lambda_K$ to itself, to $\Lambda_K$ from
$\Lambda_p$, and to $\Lambda_p$ from $\Lambda_K$. These all correspond
to binormal chords of $K \cup \{p\}$, where the normality condition is
trivial at $p$.

We now discuss the grading on $\A_{\Lambda_K \cup \Lambda_p}$. Homology classes are graded by $0$. The
grading on pure Reeb chords from $\Lambda_K$ to itself is as for
$\A_{\Lambda_K}$. In order to define the grading of mixed Reeb chords
of a two-component
Legendrian submanifold of Maslov index $0$ such as
$\Lambda_K\cup\Lambda_p$, it is customary to choose a path connecting
the two Legendrians, along with a continuous field of Legendrian
tangent planes along this path (i.e., isotropic 2-planes in the contact hyperplanes along the path) interpolating between the tangent
planes to the Legendrians at the two endpoints. There is a $\Z$'s
worth of homotopy classes of such fields of tangent planes, and
different choices affect the grading of mixed chords, shifting the
grading of chords from $\Lambda_K$ to $\Lambda_p$ up by some uniform
constant $k$ and shifting the grading of chords from $\Lambda_p$ to
$\Lambda_K$ down by $k$. Note here that the usual dimension formulas
for holomorphic disks hold and are independent of the path chosen
since for any actual disk the path is traversed algebraically zero
times.

To assign a specific grading to mixed chords, it is convenient to
place $K$ and $p$ in a specific configuration in $\R^3$. Let $(x,y,z)$ be linear coordinates on $\R^{3}$. The unit
circle in the $xy$ plane is an unknot, and we can braid $K$ around this
unknot so that it lies in a small tubular neighborhood of the circle;
also, choose $p$ to lie in the $xy$ plane, outside a disk containing the projection of $K$. If we view
$\Lambda_K,\Lambda_p \subset J^1(S^2)$ as fronts in $J^{0}(S^{2})=S^2\times\R$,
then the front of $\Lambda_p$ is the graph of the function $v \mapsto
p \cdot v$ for $v \in S^2$, and in particular the tangent planes to
this front over the equator $\{z=0\}\cap S^{2}$ are horizontal. On the
other hand, if the braid for $K$ has $n$ strands, then the front of
$\Lambda_K$ has $2n$ sheets near the equator, $n$ with positive $\R$-coordinate and $n$ with negative, and the tangent planes to these sheets are nearly horizontal. We can now take the connecting
path between $\Lambda_p$ and $\Lambda_K$ as follows: choose a point $v$ in the equator with $p \cdot v < 0$, and over $v$ join the unique point in the front of $\Lambda_p$ to any of the $n$ points in $\Lambda_K$ with negative $\R$-coordinate, see Figure \ref{fig:mixedchords}. The tangent planes are horizontal at the $\Lambda_p$ endpoint and nearly horizontal at the $\Lambda_K$ endpoint; choose the path to consist of nearly horizontal planes over $v$ joining these without rotation.

\begin{figure}
\labellist
\small\hair 2pt
\pinlabel ${\color{green} \Lambda_p}$ at 191 318
\pinlabel ${\color{blue} \Lambda_K}$ at 7 356
\pinlabel $S^2$ at 154 290
\pinlabel $c'$ at 106 249
\pinlabel $c$ at 106 259
\pinlabel $c'''$ at 380 233
\pinlabel $c''$ at 380 247
\pinlabel ${\color{red} \ast_p}$ at 110 235
\pinlabel ${\color{red} \ast_K}$ at 19 235
\pinlabel ${\color{red} \gamma_0}$ at 83 229
\pinlabel ${\color{green} p}$ at 308 35
\pinlabel ${\color{blue} K}$ at 93 71 
\pinlabel $\gamma'$ at 194 56
\pinlabel $\gamma$ at 194 21
\pinlabel $\gamma'''$ at 246 56
\pinlabel $\gamma''$ at 246 21
\endlabellist
\centering
\includegraphics[width=0.65\textwidth]{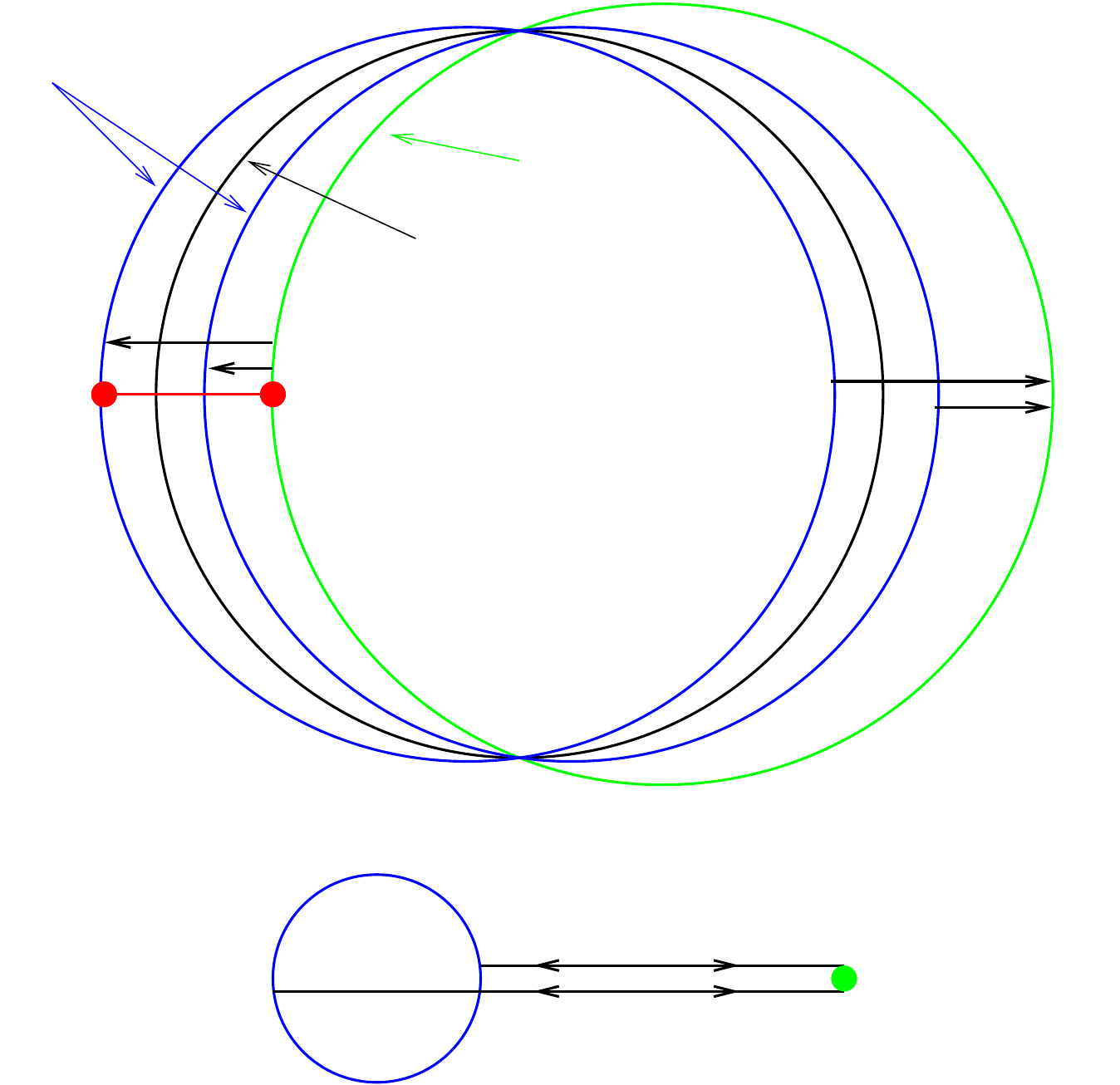}
\caption{Degrees of mixed chords $c,c',c'',c'''$ between the fronts of $\Lambda_K$ and $\Lambda_p$ in $S^2\times\R$, and the corresponding binormal chords $\gamma,\gamma',\gamma'',\gamma'''$ in $\R^3$. The connecting path $\gamma_0$ between $\Lambda_K$ and $\Lambda_p$ is also shown, with endpoints $\ast_K,\ast_p$.}
\label{fig:mixedchords}
\end{figure}

\begin{proposition}
With this choice of configuration, the Reeb chords of $\Lambda_K \cup \Lambda_p$ have grading as follows. Let $\gamma$ be a binormal chord of $K \cup \{p\}$ corresponding to a Reeb chord $c$ of $\Lambda_K \cup \Lambda_p$. Let ``$\;\ix$'' denote the Morse index of the critical point corresponding to $\gamma$ for the distance function on $K \cup
\{p\}$. Then:
\label{prop:degrees}

\begin{itemize}
\item if $c \in \mathcal{R}^{KK}$ ($c$ goes to $\Lambda_{K}$ from $\Lambda_{K}$)  then
\[
|c|=\ix(\gamma);
\]
\item if $c \in\mathcal{R}^{Kp}$ ($c$ goes to $\Lambda_{K}$ from $\Lambda_{p}$) then
\[
|c|=\ix(\gamma);
\]
\item if $c \in \mathcal{R}^{pK}$ ($c$ goes to $\Lambda_{p}$ from $\Lambda_{K}$) then
\[
|c|=\ix(\gamma)+1.
\]
\end{itemize}
\end{proposition}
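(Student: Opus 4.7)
The plan is to compute $|c|$ as the Conley--Zehnder/Maslov index of a closed path of Lagrangian subspaces of the contact $4$-plane, built from (i) the linearized Reeb flow along $c$; (ii) capping paths in $\Lambda_{K}$ and $\Lambda_{p}$ from the chord endpoints to the basepoints $\ast_{K},\ast_{p}$; and (iii) the fixed Legendrian plane field along $\gamma_{0}$. The configuration of $K$ and $p$ and the horizontal choice of interpolation along $\gamma_{0}$ make the tangent Legendrian planes at $\ast_{K},\ast_{p}$ and the interpolation itself all horizontal in $S^{2}\times\R$, so (iii) contributes $0$ to the total Maslov count, and the entire grading comes from (i) and (ii). The pure $\mathcal{R}^{KK}$ case is the grading computation of knot contact homology carried out in \cite[Section 3.3.3]{EENS}, which I would quote directly and then concentrate on the two mixed cases.

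For $c\in\mathcal{R}^{Kp}$ over a binormal segment $\gamma$ from $p$ to $x\in K$, I would work in a symplectic trivialization of the contact distribution along the straight segment from $p$ to $x$ in $\R^{3}$. In such a frame the linearized Reeb flow for the flat metric is an explicit unipotent Jacobi shear; the Legendrian tangent planes $T\Lambda_{K}$ at $x$ and $T\Lambda_{p}$ at $p$ have concrete descriptions in terms of $T_{x}K$ and of the tangent sphere at $p$; and the capping paths inside $\Lambda_{K}$ and $\Lambda_{p}$ close up the loop. Assembling these contributions in the spirit of the \cite{EENS} computation identifies the resulting Maslov index with the Morse index $\ix(\gamma)$ of $x'\mapsto|x'-p|^{2}$ on $K$ at $x$, yielding $|c|=\ix(\gamma)$.

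For $c\in\mathcal{R}^{pK}$, the underlying binormal segment is the same but is traversed in the opposite Reeb direction: this reverses the linearized Reeb flow path and exchanges which Legendrian supplies the ``incoming'' versus ``outgoing'' boundary condition of the Lagrangian loop. In the $5$-dimensional contact manifold $ST^{*}\R^{3}$ this exchange produces a constant asymmetric shift of $+1$ in the Conley--Zehnder index relative to the $Kp$ case (it is not a symmetric duality of the form $|c|+|c'|=\mathrm{const}$), giving $|c|=\ix(\gamma)+1$. The main technical obstacle will be justifying this $+1$ shift cleanly; I would do so by writing the Lagrangian boundary loops for a $Kp$ chord and its reversed $pK$ partner side by side in a common adapted symplectic frame and checking directly that they differ by a single half-twist in the Lagrangian Grassmannian, contributing exactly $+1$ to the Maslov index, with all remaining contributions identical between the two computations.
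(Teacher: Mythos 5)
Your overall strategy---computing $|c|$ directly as the Maslov index of a loop of Lagrangian planes assembled from the linearized Reeb flow, the capping paths in $\Lambda_K$ and $\Lambda_p$, and the connecting plane field along $\gamma_0$---is a legitimate alternative to the paper's route. The paper instead works entirely in the front projection in $J^0(S^2)=S^2\times\R$ and applies the formula $|c|=\ix_{\mathrm{loc}}+(D-U)-1$ of \cite[Lemma~2.5]{EENS}, where $\ix_{\mathrm{loc}}$ is the Morse index of the local difference of sheet functions (equal to $2$ for \emph{every} mixed chord) and $D-U$ counts down minus up cusps along the capping path; the pure chords are handled by citing \cite[Lemma~3.7]{EENS}, as you also propose.

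The gap is in your treatment of the capping paths, which is exactly where the content of the proof lives. There are four types of mixed chords (the long and the short binormal segment, each traversed in both directions), and in the paper's count the capping contribution $D-U$ takes the values $0$, $-1$, $+1$, $0$ on these four types: it is \emph{not} uniform within $\mathcal{R}^{Kp}$, nor within $\mathcal{R}^{pK}$. The reason is global rather than local at the chord: reversing the segment replaces the endpoints $(x,v)$, $(p,v)$ by $(x,-v)$, $(p,-v)$, which lie on different sheets of $\Lambda_K$ and over the antipodal point of $S^2$, so the capping paths to the fixed basepoints $\ast_K,\ast_p$ change, and their Maslov contributions change with them (in the front picture they cross the cusp edges over the poles a different number of times). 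Your assertion that a $Kp$ chord and its reversed $pK$ partner have ``all remaining contributions identical'' outside a single localized half-twist is therefore unjustified, and the same style of reasoning applied to the long versus the short $Kp$ chord would give equal capping contributions, which is false ($D-U=0$ versus $-1$). Likewise, the claim that ``assembling these contributions identifies the resulting Maslov index with $\ix(\gamma)$'' for all of $\mathcal{R}^{Kp}$ hides the fact that the uniform formulas $|c|=\ix(\gamma)$ and $|c|=\ix(\gamma)+1$ only emerge from a case-by-case interplay between the varying capping contributions and the varying Morse indices. A correct version of your argument must explicitly track, for each of the four endpoint configurations, the homotopy class of the capping path in $\Lambda_K\cong T^2$ (equivalently, which sheet the endpoint lies on and how the conormal covector is rotated to reach $\ast_K$); this four-case analysis cannot be replaced by a purely local comparison at the chord.
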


\begin{proof}
We begin with mixed Reeb chords between $\Lambda_K$ and $\Lambda_p$ in either direction.
For these, we can use \cite[Lemma~2.5]{EENS} (cf.\ \cite[Lemma~3.4]{EESnonisotopic}), which writes the degree $|c|$ of a Reeb chord $c$ between two sheets of a front projection in terms of the Morse index $\ix_{\mathrm{loc}}$ of the difference between the functions corresponding to these two sheets, and the difference $D-U$ between the number of up and down cusps along a capping path for the chord, as 
\[
|c|=\ix_{\mathrm{loc}}+(D-U)-1.
\] 
In our case $\ix_{\mathrm{loc}}$ is $2$ for all mixed Reeb chords: the difference functions between the sheets near the Reeb chords look roughly like the difference function between the front of $\Lambda_{p}$ and the $0$-section and hence has local maxima, see Figure \ref{fig:mixedchords}. 

To count up and down cusps, we recall the definition of capping path. Let $\ast_K,\ast_p$ denote the endpoints of the fixed path $\gamma_0$ connecting $\Lambda_K$ and $\Lambda_p$. If $c$ is a mixed chord of $\Lambda_K \cup \Lambda_p$, then the capping path for $c$ is given as follows,  cf.\ \cite[Lemma~2.5]{EENS}: if $c$ goes to $\Lambda_K$ (respectively $\Lambda_p$) from $\Lambda_p$ ($\Lambda_K$), then take the union of a path in $\Lambda_K$ ($\Lambda_p$) from the endpoint of $c$ to $\ast_K$ ($\ast_p$) and a path in $\Lambda_p$ ($\Lambda_K$) from $\ast_p$ ($\ast_K$) to the beginning point of $c$. Any capping path that passes through the north or south pole of $S^2$ traverses an up cusp if it goes from a negative sheet of $\Lambda_K$ to a positive sheet, and a down cusp if it goes in the opposite direction; see \cite[section~3.1]{EENS}.

There are four types of mixed chords, which we denote by $c,c',c'',c'''$ as shown in Figure~\ref{fig:mixedchords}.
The longer chords $c$ (with corresponding binormal chord $\gamma$) from $\Lambda_{p}$ to $\Lambda_{K}$ begin near $\ast_p$ and end on the sheet near $\ast_K$; the capping path for $c$ can be chosen to avoid the poles of $S^2$, and so the degree of $c$ is $|c|= 2-1=1=\ix(\gamma)$. The shorter chords $c'$ (with corresponding binormal chord $\gamma'$) from $\Lambda_p$ to $\Lambda_K$ end on one of the negative sheets of $\Lambda_K$; the capping path for $c'$ passes from a negative sheet to a positive sheet of $\Lambda_K$ through one of the poles, traversing one up cusp in the process, and so 
$|c'|=2-1-1=0=\ix(\gamma')$. For the mixed chords $c'',c'''$ from $\Lambda_K$ to $\Lambda_p$ with binormal chords $\gamma'',\gamma'''$, similar computations give $|c''|=2+1-1=\ix(\gamma'')+1$ and $|c'''|=2-1=\ix(\gamma''')+1$. This establishes the result for mixed chords.

For pure chords the calculation is similar; we give a brief description and refer to \cite[Lemma 3.7]{EENS} for details. There are the longer chords corresponding to the chords of the unknot: for the round unknot there is an $S^{1}$ Bott-family of chords which after perturbation gives rise to two chords. We write $c$ (with corresponding binormal chord $\gamma$) and $e$ (with corresponding binormal chord $\epsilon$) to denote a chord of $K$ corresponding to the shorter and longer chord of the unknot, respectively. The local index at $e$ (respectively $c$) is $2$ ($1$), and a path connecting the endpoint to the start point has one down cusp. This gives $|e|=2+1-1=2=\ix(\epsilon)$, and $|c|=1+1-1=1=\ix(\gamma)$. Finally, there are short chords of $K$ that are contained in a tubular neighborhood of the unknot. These are of two types, depending on whether the underlying binormal chord has Morse index $0$ or $1$. Let $a$ (with corresponding binormal chord $\alpha$) be of the former type and $b$ (with corresponding binormal chord $\beta$) of the latter. Noting that there are paths connecting their start and endpoints without cusps and that the local index is $1$ for $a$ and $2$ for $b$, it follows that $|a|=1-1=0=\ix(\alpha)$ and $|b|=2-1=1=\ix(\beta)$. The formulas relating degrees and indices thus hold for all types of chords.
\end{proof}

From Proposition~\ref{prop:degrees}, since $\ix(\gamma)$ is in $\{0,1,2\}$ if $\gamma$ joins $K$ to itself and $\{0,1\}$ otherwise, we find that Reeb chords in $\mathcal{R}^{KK}$, $\mathcal{R}^{Kp}$, and $\mathcal{R}^{pK}$ have degrees in $\{0,1,2\}$, $\{0,1\}$, and $\{1,2\}$, respectively.
It follows that $\A_{\Lambda_K,\Lambda_K}$ (respectively
$\A_{\Lambda_K,\Lambda_p}$, $\A_{\Lambda_p,\Lambda_K}$) is supported
in degree $\geq 0$ (respectively $\geq 0$, $\geq 1$), and in lowest
degree is generated as a $\Z$-module by only words with the minimal
possible number of mixed chords. In
particular, $\F^2 \A_{\Lambda_K,\Lambda_K}$, $\F^3
\A_{\Lambda_K,\Lambda_p}$, and $\F^3 \A_{\Lambda_p,\Lambda_K}$ are all
zero in degree $0$, $0$, and $1$ respectively, and so:
\begin{align*}
H_0(\A_{\Lambda_K,\Lambda_K}) &\cong H_0(
  \A_{\Lambda_K,\Lambda_K}^{(0)}) \\
H_0(\A_{\Lambda_K,\Lambda_p}) &\cong H_0(
\A_{\Lambda_K,\Lambda_p}^{(1)}) \\
H_1(\A_{\Lambda_p,\Lambda_K}) &\cong H_1(
\A_{\Lambda_p,\Lambda_K}^{(1)}).
\end{align*}

As noted in Section~\ref{ssec:LCHlink}, the first of these,
$H_0(
  \A_{\Lambda_K,\Lambda_K}^{(0)})$, is exactly
  the degree $0$ Legendrian contact homology of $\Lambda_K$, that is,
  degree $0$ knot contact homology. The homology coefficients
$\Z[H_1(\Lambda_K)] \cong \Z[l^{\pm 1},m^{\pm 1}]$ form a degree $0$ subalgebra
of the DGA of $\Lambda_K$ with zero differential, and so we have a map
$\Z[l^{\pm 1},m^{\pm 1}]$ into the degree $0$ knot contact homology of
$K$. In addition, $\A_{\Lambda_K,\Lambda_K}$ acts on the left
(respectively right) on $\A_{\Lambda_K,\Lambda_p}$ (respectively
$\A_{\Lambda_p,\Lambda_K}$), with an induced action on homology.

\begin{definition}
The \textit{KCH-triple}
\label{def:KCH-triple}
of $\Lambda_K \cup \Lambda_p$ is
\[
(R_{KK},R_{Kp},R_{pK}) = (H_0(\A_{\Lambda_K,\Lambda_K}),
H_0(\A_{\Lambda_K,\Lambda_p}),H_1(\A_{\Lambda_p,\Lambda_K})).
\]
Here $R_{KK}$ is viewed as a ring equipped with a map $\Z[l^{\pm 1},m^{\pm
  1}] \to R_{KK}$, and $R_{Kp}$ and $R_{pK}$ are
an $(R_{KK},\Z)$-bimodule and a $(\Z,R_{KK})$-bimodule, respectively.
\end{definition}

Note that although we have chosen a particular placement of $K$ and
$p$ above,
the KCH-triple of $\Lambda_K \cup \Lambda_p$ is unchanged by isotopy of $\Lambda_K$ and
$\Lambda_p$, since it can be defined strictly in terms of
graded pieces of the
homology of $\A_{\Lambda_K \cup \Lambda_p}$, which is invariant under Legendrian isotopy up to quasi-isomorphism. That is:

\begin{proposition}
If $\Lambda_{K_0} \cup \Lambda_p$ and $\Lambda_{K_1} \cup \Lambda_p$
are Legendrian isotopic,
\label{prop:KCH-triple}
then they have isomorphic KCH-triples
$(R_{K_0K_0},R_{K_0p},R_{pK_0})$ and $(R_{K_1K_1},R_{K_1p},R_{pK_1})$,
in the sense that there are isomorphisms
\begin{align*}
\psi_{KK} &:\thinspace R_{K_0K_0} \stackrel{\cong}{\to} R_{K_1K_1} \\
\psi_{Kp} &:\thinspace R_{K_0p} \stackrel{\cong}{\to} R_{K_1p} \\
\psi_{pK} &:\thinspace R_{pK_0} \stackrel{\cong}{\to} R_{pK_1}
\end{align*}
compatible with multiplications $R_{K_iK_i} \otimes R_{K_iK_i} \to
R_{K_iK_i}$, $R_{K_iK_i} \otimes R_{K_ip} \to
R_{K_ip}$, and $R_{pK_i} \otimes R_{K_iK_i} \to
R_{pK_i}$. If furthermore the Legendrian isotopy is parametrized in
the sense that it sends the basis $l_0,m_0$ of $H_1(\Lambda_{K_0})$ to
the basis $l_1,m_1$ of $H_1(\Lambda_{K_1})$, then $\psi_{KK}(m_0) =
m_1$, $\psi_{KK}(l_0) = l_1$.
\end{proposition}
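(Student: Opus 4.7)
The plan is to leverage the Legendrian isotopy invariance of the composable DGA stated at the end of Section~\ref{ssec:LCHlink}. A Legendrian isotopy from $\Lambda_{K_0}\cup\Lambda_p$ to $\Lambda_{K_1}\cup\Lambda_p$ produces a DGA quasi-isomorphism
\[
\Phi\colon \A_{\Lambda_{K_0}\cup\Lambda_p}\to \A_{\Lambda_{K_1}\cup\Lambda_p}
\]
that is unital and preserves the idempotents $e_K,e_p$, and therefore respects the Mishachev decomposition; restriction gives quasi-isomorphisms on each of the four summands indexed by $(i,j)\in\{K,p\}^2$.

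Next I would show that $\Phi$ also preserves the filtration $\F^\bullet$ by number of mixed chords. Since $\Phi$ is a unital algebra map, it suffices to check that any single mixed chord generator $c\in \mathcal{R}^{ij}$ (with $i\neq j$) has $\Phi(c)\in \F^1$. But $\Phi(c)$ is a sum of composable words in $\A_{\Lambda_i,\Lambda_j}$, and any such word must contain at least one mixed chord, because $\Lambda_p$ has no self Reeb chords so any composable path between distinct components must traverse a mixed chord. Multiplicativity then gives $\Phi(\F^k)\subseteq \F^k$ throughout, so $\Phi$ descends to chain maps of the successive filtered quotients $\A^{(0)}_{\Lambda_K,\Lambda_K}$, $\A^{(1)}_{\Lambda_K,\Lambda_p}$, and $\A^{(1)}_{\Lambda_p,\Lambda_K}$. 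Combining with the degree-vanishing observations at the end of Section~\ref{ssec:enhanced} (which identify $H_0(\A_{\Lambda_K,\Lambda_K})\cong H_0(\A^{(0)}_{\Lambda_K,\Lambda_K})$, $H_0(\A_{\Lambda_K,\Lambda_p})\cong H_0(\A^{(1)}_{\Lambda_K,\Lambda_p})$, and $H_1(\A_{\Lambda_p,\Lambda_K})\cong H_1(\A^{(1)}_{\Lambda_p,\Lambda_K})$), this furnishes the desired isomorphisms $\psi_{KK},\psi_{Kp},\psi_{pK}$.

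The multiplicative compatibilities of the $\psi_{ij}$ then follow automatically: each relevant module action is a restriction of the underlying DGA product $\A_{\Lambda_i,\Lambda_j}\otimes \A_{\Lambda_j,\Lambda_k}\to \A_{\Lambda_i,\Lambda_k}$, which is preserved by the algebra homomorphism $\Phi$ and hence by the induced maps on homology. For the parametrized case, on the degree-$0$ subalgebra $\Z[H_1(\Lambda_{K_0})]\subset \A_{\Lambda_{K_0},\Lambda_{K_0}}$ the chain map $\Phi$ acts via the isotopy-induced map $H_1(\Lambda_{K_0})\to H_1(\Lambda_{K_1})$, since the $H_1$-coefficients in the DGA encode the homology class of the boundary of a holomorphic disk and are transported along the Lagrangian cobordism determined by the isotopy. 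A parametrized isotopy by definition sends $m_0\mapsto m_1$ and $l_0\mapsto l_1$, so the same holds of $\psi_{KK}$.

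I expect the main obstacle to be the filtration-preservation step, since while invariance of the Mishachev decomposition is explicitly recorded in Section~\ref{ssec:LCHlink}, invariance of $\F^\bullet$ is not stated there. The argument above shows it nevertheless follows purely formally from the absence of self Reeb chords on $\Lambda_p$; once this is in hand, the remaining verifications are routine consequences of $\Phi$ being a DGA map.
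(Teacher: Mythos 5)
Your proof is correct and follows essentially the same route as the paper, which justifies the proposition by noting that the KCH-triple consists of graded pieces of $H_*(\A_{\Lambda_K\cup\Lambda_p})$ and that a Legendrian isotopy induces a quasi-isomorphism of the composable DGAs respecting the Mishachev decomposition. The filtration-preservation step you flag as the main obstacle is actually unnecessary: Definition~\ref{def:KCH-triple} takes $R_{KK}$, $R_{Kp}$, $R_{pK}$ to be the homology of the full summands $\A_{\Lambda_i,\Lambda_j}$ in the appropriate degrees (the identification with the filtered quotients $\A^{(0)}_{\Lambda_K,\Lambda_K}$, $\A^{(1)}_{\Lambda_K,\Lambda_p}$, $\A^{(1)}_{\Lambda_p,\Lambda_K}$ being an internal degree computation carried out separately for each link), so the decomposition-respecting quasi-isomorphism already yields the isomorphisms and their multiplicative compatibilities.
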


\begin{remark}
As mentioned above, the gradings in $\A_{\Lambda_{K},\Lambda_{p}}$ and
$\A_{\Lambda_{p},\Lambda_{K}}$ are not canonically defined but rather
depend on a choice of homotopy class of a path connecting the tangent
planes at base points in the components of the Legendrian link
(possible choices are in one to one correspondence with $\Z$).
In general, in Definition~\ref{def:KCH-triple} we would want to set
$R_{Kp} = H_d(\A_{\Lambda_K,\Lambda_p})$ and $R_{pK} =
H_{1-d}(\A_{\Lambda_p,\Lambda_K})$, where $d\in\Z$ corresponds to the
choice of homotopy class of path. (In all cases we still have $R_{KK}
= H_0(\A_{\Lambda_K,\Lambda_K})$.)

This indeterminacy would seem to pose problems for
Proposition~\ref{prop:KCH-triple}. However, we can eliminate the
ambiguity by stipulating that we have picked the unique choice of
grading for which
\[
\min \{d \,|\, H_d(\A_{\Lambda_K,\Lambda_p}) \neq 0\} = 0.
\]
This is because with our preferred choice of grading,
$H_d(\A_{\Lambda_K,\Lambda_p}) = 0$ for $d < 0$, while we will show that
$R_{Kp} = H_0(\A_{\Lambda_K,\Lambda_p})$ is nonzero (see for instance
Proposition~\ref{prop:triple-isom}).
\end{remark}

\begin{remark}
If we choose $p$ sufficiently far away from $K$, then by action
considerations (the action of the Reeb chord at the positive puncture of a holomorphic disk is greater than the sum of the actions at the negative punctures), the differential in $(\A_{\Lambda_K \cup
\Lambda_p},\partial)$ of any word containing exactly $k$ mixed chords must
only involve words again containing exactly $k$ mixed chords. In this
case, the DGA $(\A_{\Lambda_K \cup \Lambda_p},\partial)$ is isomorphic
to its associated graded DGA under the filtration $\F^k$, and the
homology $\LCH_*(\Lambda_K \cup \Lambda_p)$ decomposes as a direct sum
by number of mixed chords.
\end{remark}

We will use the KCH-triple of $\Lambda_K \cup \Lambda_p$ to produce a complete knot invariant. More specifically, we have the following object created from the KCH-triple:

\begin{definition}
Let $R_{pp}$ denote the $\Z$-module
\[
R_{pp} = R_{pK} \otimes_{R_{KK}} R_{Kp}.
\]
\end{definition}

Alternatively, we can write $R_{pp}$ in terms of the homology of $\A_{\Lambda_K \cup \Lambda_p}$:

\begin{proposition}
We have
\label{prop:Rpp}
\[
R_{pp} \cong  H_1(\A_{\Lambda_p,\Lambda_p}) \cong H_1(\A_{\Lambda_p,\Lambda_p}^{(2)}).
\]
\end{proposition}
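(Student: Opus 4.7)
The plan is to establish both isomorphisms by a chain-level analysis of $\A_{\Lambda_p,\Lambda_p}$ using its filtration by the number of mixed chords, exploiting Proposition~\ref{prop:degrees} together with the facts that $\Lambda_p\cong S^2$ has $H_1(\Lambda_p)=0$ and admits no self Reeb chords. For the first isomorphism $H_1(\A_{\Lambda_p,\Lambda_p})\cong H_1(\A_{\Lambda_p,\Lambda_p}^{(2)})$, observe that the quotient $\A_{\Lambda_p,\Lambda_p}/\F^2=\A_{\Lambda_p,\Lambda_p}^{(0)}$ is just $\Z\cdot e_p$ concentrated in degree $0$. A word in $\F^{2m}\A_{\Lambda_p,\Lambda_p}$ must alternate between $\Lambda_p$ and $\Lambda_K$, so contains exactly $m$ chords from $\mathcal{R}^{pK}$; since these have degree $\geq 1$ by Proposition~\ref{prop:degrees}, $\F^4\A_{\Lambda_p,\Lambda_p}$ is concentrated in degrees $\geq 2$. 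The long exact sequences associated to
\[
0\to \F^2\A_{\Lambda_p,\Lambda_p}\to \A_{\Lambda_p,\Lambda_p}\to \A_{\Lambda_p,\Lambda_p}^{(0)}\to 0 \quad \text{and} \quad 0\to \F^4\A_{\Lambda_p,\Lambda_p}\to \F^2\A_{\Lambda_p,\Lambda_p}\to \A_{\Lambda_p,\Lambda_p}^{(2)}\to 0
\]
then collapse in degrees $0$ and $1$ to yield $H_1(\A_{\Lambda_p,\Lambda_p})\cong H_1(\F^2\A_{\Lambda_p,\Lambda_p})\cong H_1(\A_{\Lambda_p,\Lambda_p}^{(2)})$.

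For $R_{pp}\cong H_1(\A_{\Lambda_p,\Lambda_p}^{(2)})$, I would first identify $\A_{\Lambda_p,\Lambda_p}^{(2)}$ with the tensor product $\A_{\Lambda_p,\Lambda_K}^{(1)}\otimes_{\A_{\Lambda_K,\Lambda_K}^{(0)}}\A_{\Lambda_K,\Lambda_p}^{(1)}$ as a DG chain complex. Any composable word from $\Lambda_p$ to $\Lambda_p$ with exactly two mixed chords factors uniquely as $a\cdot w\cdot b$ with $a\in\mathcal{R}^{pK}$, $w$ a composable word in $\A_{\Lambda_K,\Lambda_K}^{(0)}$, and $b\in\mathcal{R}^{Kp}$; combined with the fact that $\A_{\Lambda_p,\Lambda_K}^{(1)}$ and $\A_{\Lambda_K,\Lambda_p}^{(1)}$ are respectively free right and left $\A_{\Lambda_K,\Lambda_K}^{(0)}$-modules on their sets of mixed chords, this gives an isomorphism of graded $\Z$-modules. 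The differential of a mixed chord contains only words with an odd number of mixed chords, by a parity argument on how disk boundaries alternate between the two Legendrians, so the projection of $\partial$ to $\F^2/\F^4$ retains only the minimal-mixed-chord components, which by the Leibniz rule reproduce the tensor-product differential.

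With this identification, $H_1$ is computed directly. By Proposition~\ref{prop:degrees}, $(\A_{\Lambda_p,\Lambda_p}^{(2)})_0=0$, so every degree-$1$ chain is automatically a cycle. Degree-$1$ generators have the form $a\cdot w\cdot b$ with $|a|=1$, $|w|=0$, $|b|=0$, and the images of degree-$2$ generators under $\partial$ split by the Leibniz rule into three types: $(\partial a)\cdot w\cdot b$ with $|a|=2$, $a\cdot(\partial w)\cdot b$ with $|w|=1$, and $a\cdot w\cdot(\partial b)$ with $|b|=1$. Quotienting by the middle type passes $(\A_{\Lambda_K,\Lambda_K}^{(0)})_0$ in the middle to $R_{KK}=H_0(\A_{\Lambda_K,\Lambda_K}^{(0)})$, while the outer relations reproduce the chain-level presentations of $R_{pK}=H_1(\A_{\Lambda_p,\Lambda_K}^{(1)})$ and $R_{Kp}=H_0(\A_{\Lambda_K,\Lambda_p}^{(1)})$ on the two sides, yielding exactly $R_{pK}\otimes_{R_{KK}}R_{Kp}=R_{pp}$.

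The main obstacle is the compatibility of differentials in the tensor identification, which amounts to matching the projection of $\partial a$ to $\F^1/\F^3=\A_{\Lambda_p,\Lambda_K}^{(1)}$ (and symmetrically for $\partial b$) with the component that appears when $\partial(a\cdot w\cdot b)$ is evaluated modulo $\F^4$. This reduces to careful bookkeeping of how mixed chords in holomorphic disk boundaries alternate endpoints between $\Lambda_p$ and $\Lambda_K$.
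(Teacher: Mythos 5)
Your argument is correct and follows essentially the same route as the paper's (much terser) proof: both rest on the observations that $\Lambda_p$ has no self Reeb chords, that chords in $\mathcal{R}^{pK}$ have degree $\geq 1$, and that consequently the degree-$1$ part of $\A_{\Lambda_p,\Lambda_p}$ consists exactly of words $a\,w\,b$ with $a\in\mathcal{R}^{pK}$, $w$ pure, $b\in\mathcal{R}^{Kp}$, after which the identification with $R_{pK}\otimes_{R_{KK}}R_{Kp}$ is the right-exactness computation you describe. You have simply made explicit the filtration, long-exact-sequence, and tensor-product bookkeeping that the paper leaves implicit.
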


\begin{proof}
The first isomorphism is immediate from the definition of $R_{pp}$.
Since there are no self Reeb chords of $\Lambda_p$, and since any mixed Reeb chord to $\Lambda_p$ from $\Lambda_K$ has degree $\geq 1$, any degree $1$ generator of $\A_{\Lambda_p,\Lambda_p}$
must consist of a mixed chord to $\Lambda_p$ from $\Lambda_K$, followed by some number of Reeb chords of $\Lambda_K$, followed by a mixed chord to $\Lambda_K$ from $\Lambda_p$. The result now follows from the definition of the KCH-triple.
\end{proof}

In fact, we will show (Proposition~\ref{prop:ring-isom}) that there is a ring isomorphism
\[
\Z \oplus R_{pp} \cong \Z[\pi_1(\R^3\setminus K)]
\]
and this is the key to proving our main result, Theorem~\ref{thm:main}. To get this, we in particular need a multiplication operation on $R_{pp}$. In the next subsection, we will define a product map
\[
\mu :\thinspace R_{Kp} \otimes R_{pK} \to R_{KK}.
\]
This will then induce a map
\[
\mu :\thinspace R_{pp} \otimes_\Z R_{pp} =
R_{pK} \otimes_{R_{KK}} R_{Kp} \otimes_\Z R_{pK} \otimes_{R_{KK}}
R_{Kp}
\to
R_{pK} \otimes_{R_{KK}} R_{KK} \otimes_{R_{KK}} R_{Kp} =
R_{pp},
\]
which is the desired multiplication on $R_{pp}$.

\subsection{Product}
\label{ssec:product}

Recall that the differential in the contact homology DGA $\A_{\Lambda_{K}\cup\Lambda_p}$ that is used to define the KCH-triple $(R_{KK},R_{Kp},R_{pK})$ counts holomorphic disks with one positive puncture in the symplectization $\R\times ST^*\R^3$ with boundary on $\R \times(\Lambda_p \cup \Lambda_K)$. As
described in \cite{E08,BEE2}, one can also produce
invariants by counting holomorphic disks with two positive punctures
at mixed Reeb chords, along with an arbitrary number of negative
punctures at pure Reeb chords.

For general two-component Legendrian links, the resulting algebraic
structure is a bit complicated to describe, but in our case it is
simple because $\Lambda_p$ has no self Reeb chords: reading along the
boundary of any of these two-positive-punctured disks, we see a
positive puncture from $\Lambda_K$ to $\Lambda_p$, followed by a
positive puncture from $\Lambda_p$ to $\Lambda_K$, followed by some
number of negative punctures from $\Lambda_K$ to $\Lambda_K$.
This allows us to define the product of a Reeb chord from $\Lambda_K$
to $\Lambda_p$ with a Reeb chord from $\Lambda_p$ to $\Lambda_K$, or
more generally the product of composable words in
$\A_{\Lambda_K,\Lambda_p}$ and $\A_{\Lambda_p,\Lambda_K}$, each of
which contains exactly one mixed Reeb chord. The result of the product
in either case will be an alternating word of pure chords from $\Lambda_K$ to $\Lambda_K$ and homotopy classes of loops in $\Lambda_K$. (No mixed Reeb chords are involved.)

\begin{figure}
\labellist
\small\hair 2pt
\pinlabel $-1$ at 0 129
\pinlabel $1$ at 177 129
\pinlabel $+$ at 144 129
\pinlabel $+$ at 36 129
\pinlabel $-$ at 56 84
\pinlabel $-$ at 90 73
\pinlabel $-$ at 118 84
\pinlabel $a_1$ at 368 253
\pinlabel $a_2$ at 495 253
\pinlabel $b_1$ at 323 4
\pinlabel $b_2$ at 433 4
\pinlabel $b_3$ at 541 4
\pinlabel $\beta_0$ at 304 117
\pinlabel $\beta_1$ at 378 58
\pinlabel $\beta_2$ at 486 58
\pinlabel $\beta_3$ at 555 148
\pinlabel ${\color{green} \Lambda_p}$ at 430 147
\pinlabel ${\color{blue} \Lambda_K}$ at 348 168
\pinlabel ${\color{blue} \Lambda_K}$ at 339 58
\pinlabel ${\color{blue} \Lambda_K}$ at 447 58
\pinlabel ${\color{blue} \Lambda_K}$ at 514 187
\endlabellist
\centering
\includegraphics[width=0.7\textwidth]{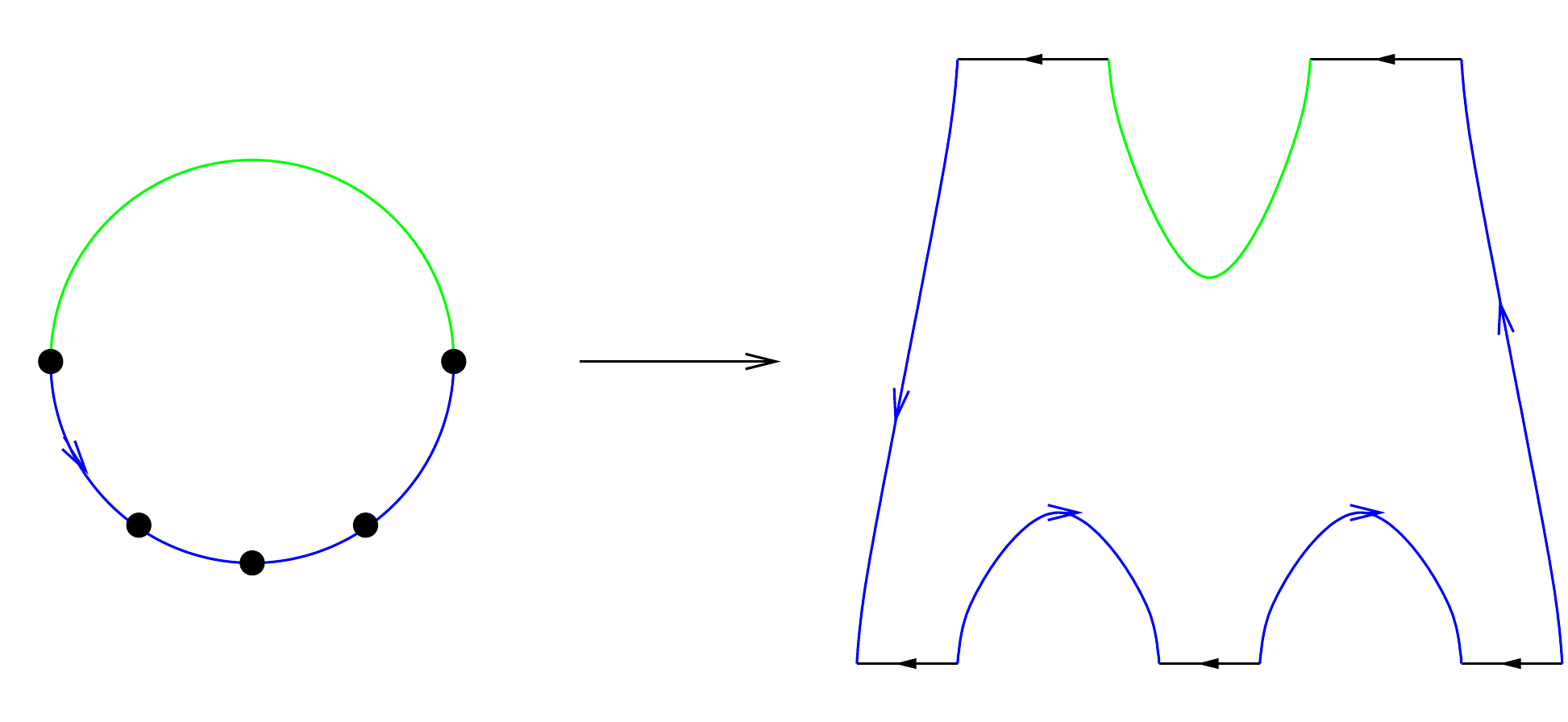}
\caption{
A disk in the moduli space
$\M(a_1,a_2;\beta_0b_1\beta_1b_2\beta_2b_3\beta_3)$. Here the
$\beta_i$ coefficients
record the homology classes of the depicted arcs in $\Lambda_K$. The arrows on Reeb chords denote their positive orientations.
}
\label{fig:product}
\end{figure}

We now describe this construction in more detail. Let $a_1$ be a Reeb
chord to $\Lambda_{K}$ from $\Lambda_{p}$ and $a_2$ a Reeb chord to
$\Lambda_{p}$ from $\Lambda_K$. Let $\mathbf{b}=\beta_0 b_1 \beta_1
\cdots b_m \beta_m$ be a word in Reeb chords $b_i$ from $\Lambda_K$ to
itself and homology classes $\beta_i$ in $H_1(\Lambda_K)$.
Consider the moduli space of holomorphic disks in the symplectization
$\R\times ST^{\ast}\R^{3}$ of the following form. We take the domain
of the disks to be the unit disk in the complex plane with punctures
and boundary data as follows: there are two positive punctures at $1$ and $-1$;
the arc in the upper half plane connecting these two punctures maps to
$\R\times\Lambda_{p}$; there are $m\ge 0$ negative punctures along the boundary
arc in the lower half plane; and the boundary components in the lower
half plane all map to $\R\times\Lambda_{K}$ according to
$\mathbf{b}$. See Figure~\ref{fig:product}.
We write
\[
\M(a_1,a_2;\mathbf{b})
\]
for the moduli space of holomorphic disks in the symplectization $\R\times ST^{\ast}\R^{3}$ with punctures and boundary data as described.

The dimension of this moduli space is then the following, where $|c|$ denotes the grading of the Reeb chord $c$:
\[
\dim(\M(a_1,a_2;\mathbf{b}))= |a_1|+|a_2|-|\mathbf{b}|.
\]
\begin{remark}\label{r:gendim}
This is a special case of a general dimension formula for holomorphic disks in the symplectization $\R\times ST^{\ast}Q$ of the cosphere bundle over an $n$-manifold, with boundary in $\R\times\Lambda$, where $\Lambda$ is a Legendrian submanifold of Maslov class $0$. If such a disk $u$ has positive punctures at Reeb chords $a_1,\dots,a_p$ and negative punctures at Reeb chords $b_1,\dots,b_q$ then its formal dimension $\dim(u)$ is (see e.g. \cite[Theorem A.1]{CEL} or \cite[Section 3.1]{E08}):
\[
\dim(u)=(n-3) + \sum_{j=1}^{p} (|a_j|-(n-3)) -\sum_{k=1}^{q}|b_k|.
\]
\end{remark}

As in the definition of the differential in Legendrian contact
homology, we need to consider orientations of these moduli spaces induced by capping operators and the Fukaya orientation on the space of linearized Cauchy--Riemann operators on the disk with trivialized Lagrangian boundary condition.\footnote{In fact, for the purposes of this paper and in particular the proof of Theorem~\ref{thm:main}, one could ignore orientations and work over $\Z/2$ rather than $\Z$. However, for the purposes of the general theory, we will work over $\Z$ throughout.}
There is basically only one point where the construction here differs from that used for the differential. The disks in the differential have a unique positive puncture and we write the capped-off linearized problem for a disk with positive puncture at $a$ and negative punctures and boundary data according to $\mathbf{b}=\beta_0b_1\cdots b_m\beta_m$ as above as (with $C^{\pm}_{c}$ denoting the positive/negative capping operator at the Reeb chord $c$ and $L$ denoting the linearized Cauchy-Riemann operator at the holomorphic disk under consideration)
\[
C_{a}^{+}\oplus L\oplus C^{-}_{b_1}\oplus\cdots\oplus C^{-}_{b_m} \approx F,
\]
where $F$ denotes a trivialized boundary condition on the closed disk
and where ``$\approx$'' means ``is related to via a linear gluing exact
sequence'', see \cite{EESori}. For the product, we have disks with two
positive punctures and there is no natural way to order the punctures
in general. However, in our case the two positive punctures are distinguished since both are mixed and have different endpoint configurations. We choose the following ordering:
\[
C_{a_1}^{+}\oplus L\oplus C^{-}_{b_1}\oplus\cdots\oplus C^{-}_{b_m}\oplus C_{a_2}^{+} \approx F.
\]
As usual this then induces a linear gluing sequence which in the
transverse case orients the moduli space.

With these orientations determined, we can now define $\mu$. Suppose
that we have $\mathbf{c}_1 a_1\in \A_{\Lambda_K,\Lambda_p}^{(1)}$ and
$a_2\mathbf{c}_{2}\in \A_{\Lambda_p,\Lambda_K}^{(1)}$, where $a_1$,
$a_2$ are mixed chords to $\Lambda_K$ from $\Lambda_p$ (respectively
to $\Lambda_p$ from $\Lambda_K$), and $\mathbf{c}_1$, $\mathbf{c}_2$
are words in pure Reeb chords on $\Lambda_K$ and homology classes in
$\Lambda_K$. Define:
\[
\mu(\mathbf{c}_1 a_1,a_2\mathbf{c}_{2})=
\sum_{|a_1|+|a_2|-|\mathbf{b}|=1}|\M(a_1,a_2;\mathbf{b})/\R|\;\mathbf{c_1}\mathbf{b}\mathbf{c}_2.
\]
This produces a map
\[
\mu :\thinspace
\A_{\Lambda_K,\Lambda_p}^{(1)}
\otimes
\A_{\Lambda_p,\Lambda_K}^{(1)}
\to
\A_{\Lambda_K,\Lambda_K}^{(0)}.
\]

\begin{proposition}[\cite{E08}]
The product map $\mu$ has degree $-1$
\label{prop:mu}
and satisfies the Leibniz rule:
\[
\mu \circ(\partial \otimes 1 + 1 \otimes \partial) = \partial \circ \mu.
\]
Thus $\mu$ descends to a map on homology.
\end{proposition}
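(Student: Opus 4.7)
The plan splits into two assertions. For the degree claim, the dimension formula in Remark~\ref{r:gendim} shows that the $0$-dimensional moduli spaces $\M(a_1,a_2;\mathbf{b})/\R$ summed in $\mu$ satisfy $|\mathbf{b}|=|a_1|+|a_2|-1$; comparing the degree of the output $\mathbf{c}_1\mathbf{b}\mathbf{c}_2$ to that of the input $(\mathbf{c}_1 a_1)\otimes(a_2\mathbf{c}_2)$ then immediately yields a shift of $-1$.

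For the Leibniz rule, the plan is the standard SFT boundary argument. I would fix $a_1$, $a_2$, and $\mathbf{b}$ with $|a_1|+|a_2|-|\mathbf{b}|=2$ and analyze the compactified one-manifold $\overline{\M(a_1,a_2;\mathbf{b})/\R}$, whose signed boundary count must vanish. By SFT compactness the boundary points correspond to two-level holomorphic buildings joined by a single breaking Reeb chord, and the contributing strata split according to how the positive punctures $a_1,a_2$ distribute between the levels. If both remain on the top level, the breaking chord is a pure Reeb chord of $\Lambda_K$ appearing as a negative puncture on top and a positive puncture on the bottom level, where the latter is a standard differential disk; summation over such strata reproduces $\partial\circ\mu$. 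If instead $a_i$ is on the top and $a_{3-i}$ on the bottom, the top is a one-positive-puncture disk contributing to $\partial a_i$, while the bottom has positive punctures at the breaking chord and at $a_{3-i}$ and is itself of the type counted by $\mu$; summing the two choices $i=1,2$ reproduces $\mu\circ(\partial\otimes 1 + 1\otimes\partial)$. Boundary breaking along $\R\times\Lambda_p$ is ruled out since $\Lambda_p$ carries no self Reeb chords, and exactness in the symplectization excludes disk and sphere bubbling; transversality can be achieved by a generic choice of cylindrical almost complex structure, as in \cite{E08}.

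The main obstacle I expect is the orientation and sign bookkeeping. The capping-operator ordering
\[
C_{a_1}^{+}\oplus L\oplus C_{b_1}^{-}\oplus\cdots\oplus C_{b_m}^{-}\oplus C_{a_2}^{+}\approx F
\]
stipulated just before the proposition is chosen precisely so that the linear gluing exact sequences at the three types of boundary stratum induce boundary orientations agreeing with the signs demanded by the Leibniz identity. Verifying this requires tracking how $C_{a_2}^{+}$ commutes past the pair of capping operators at the breaking chord across the gluing, in the style of \cite{EESori}, and it is here that the nonstandard placement of the second positive puncture at the tail of the capping sequence becomes essential. Once the signs are pinned down, summing over $\mathbf{b}$ and concatenating with the inert words $\mathbf{c}_1$ and $\mathbf{c}_2$ on the left and right gives the chain-level identity, which then descends to homology.
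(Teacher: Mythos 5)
Your overall strategy coincides with the paper's: the degree count from the dimension formula, and the Leibniz rule from the ends of the compactified one-manifolds $\overline{\M(a_1,a_2;\mathbf{b})/\R}$ with $|a_1|+|a_2|-|\mathbf{b}|=2$, with the two-level buildings sorted into the $\partial\circ\mu$ and $\mu\circ(\partial\otimes 1+1\otimes\partial)$ strata exactly as you describe. That part is fine, as is the remark on orientations.

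There is, however, a genuine gap at the one step that is actually the point of this proposition: ruling out boundary breaking. Your stated reason --- ``boundary breaking along $\R\times\Lambda_p$ is ruled out since $\Lambda_p$ carries no self Reeb chords'' --- does not address the phenomenon. Boundary breaking is a codimension-one degeneration in which the domain splits along an arc whose two endpoints are identified to a single \emph{boundary node on the Lagrangian}; no Reeb chord is involved, so the absence of self chords of $\Lambda_p$ is irrelevant, and you say nothing about splittings whose node would lie on $\R\times\Lambda_K$, which is the dangerous case (two pieces, each carrying one positive puncture, can both have nonnegative index --- this is precisely why Legendrian RSFT is hard in general). The correct argument, which is the paper's, is topological: a splitting arc in the domain that separates the two positive punctures $\pm 1$ must have one endpoint on the upper boundary arc (mapped to $\R\times\Lambda_p$) and one on the lower boundary arcs (mapped to $\R\times\Lambda_K$), so the would-be node would have to lie on two disjoint Lagrangians --- impossible; any splitting arc that does \emph{not} separate the positive punctures cuts off a piece with no positive puncture, which is excluded by exactness/positivity of energy. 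Without this argument the Leibniz identity is unproved, since unaccounted boundary-breaking strata would contribute extra terms. (Secondarily, for transversality the paper notes the disks cannot be multiply covered because only one positive puncture is asymptotic to a chord from $\Lambda_p$ to $\Lambda_K$; your appeal to a generic almost complex structure implicitly needs this.)
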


\noindent
Here and in the rest of the paper, we use Koszul signs when defining the tensor product of maps: in particular, $(\partial \otimes 1)(a\otimes b) = (\partial a) \otimes b$ while $(1\otimes\partial)(a\otimes b) = (-1)^{|a|} a \otimes(\partial b)$ if $\partial$ has odd degree.
Although Proposition~\ref{prop:mu} is implicitly contained in
\cite{E08}, we give the proof for definiteness.

\begin{proof}[Proof of Proposition~\ref{prop:mu}]
Once we know that the moduli spaces are transversely cut out for
generic data then the fact that $\mu$ has degree $-1$ follows from the dimension formula.
The disks with two positive punctures considered here cannot be multiply covered for topological reasons (e.g.~ only one positive puncture is asymptotic to a chord from $\Lambda_{p}$ to $\Lambda_K$). Thus the same argument as for disks with one positive puncture can be used to show transversality for generic almost complex structure where the formal dimension then equals the actual dimension, see e.g. \cite[Proposition 2.3]{EES2}.

To see the displayed equation, we look at the boundary of moduli spaces $\M(a_1,a_2;\mathbf{b})$ of dimension $2$. It follows by SFT compactness that the boundary consists of broken curves. We must check that there cannot be any boundary breaking. To see this note that any splitting arc in the domain that separates the positive punctures must connect boundary points that map to distinct components of the Legendrian submanifold. Thus there is no boundary breaking and several-level disks account for the whole boundary. The equation follows from identifying contributing terms with the boundary of an oriented 1-dimensional manifold.
\end{proof}

We will also need the fact that $\mu$ is invariant under Legendrian
isotopy. As in \cite{E08} this can be understood by looking at
cobordism maps and homotopies of such. We will only need invariance on the
level of homology, and this is slightly easier to prove: we need only
the statement that the multiplication on homology induced by $\mu$ is
invariant under Legendrian isotopy and this follows from properties of
cobordism maps and analogues of these for the product. To see this
first recall that a Legendrian isotopy
$\Lambda_{K_t}\cup\Lambda_{p_t}$, $0\le t\le 1$,
from $\Lambda_{K_0}\cup\Lambda_{p_0}$ to $\Lambda_{K_1}\cup\Lambda_{p_1}$ gives an exact Lagrangian cobordism $L_K\cup L_p\subset \R\times ST^{\ast}\R^{3}$ that agrees with $(\R\times\Lambda_{K_0})\cup (\R\times\Lambda_{p_0})$ in the positive end and $(\R\times\Lambda_{K_1})\cup (\R\times\Lambda_{p_1})$ in the negative. Furthermore there is a cobordism map
\[
\Phi\colon\A_{\Lambda_{K_0}\cup\Lambda_{p_0}}\to
\A_{\Lambda_{K_1}\cup\Lambda_{p_1}}
\]
that is a quasi-isomorphism respecting the filtration with respect to the number of mixed chords. This cobordism map counts holomorphic disks in the cobordism as follows. If $a$ is a Reeb chord of $\Lambda_{K_0}\cup\Lambda_{p_0}$ and if $\mathbf{b}$ is an alternating word of Reeb chords and homotopy classes of paths in $\Lambda_{K_1}\cup\Lambda_{p_1}$ then let $\M^{\rm co}(a,\mathbf{b})$ denote the moduli space of holomorphic disks in $\R\times ST^{\ast}\R^{3}$ with boundary on $L_{K}\cup L_{p}$, with one positive puncture where the disk is asymptotic to the Reeb chord $a$ and with several negative punctures which together with the boundary arcs give the word $\mathbf{b}$. The map $\Phi$ is then given by
\[
\Phi(a)=\sum_{|a|-|\mathbf{b}|=0}|\M^{\rm co}(a,\mathbf{b})|\mathbf{b}.
\]

In order to study invariance for the product, we look at moduli spaces
in the cobordism analogous to the moduli spaces used in the definition
of $\mu$. If $a_1$ is a Reeb chord from $\Lambda_{K_0}$ to $\Lambda_{p_0}$, $a_2$ a chord from $\Lambda_{p_{0}}$ to $\Lambda_{K_0}$, and $\mathbf{b}$ a word of Reeb chords and homotopy classes of paths on $\Lambda_{K_1}$ as above, then let $\M^{\rm co}(a_1,a_2;\mathbf{b})$ denote the moduli space of disks with boundary on $L_K\cup L_p$ with two positive punctures asymptotic to $a_1$ and $a_2$, and with negative punctures and boundary arcs mapping according to $\mathbf{b}$. Then define
$\kappa\colon R_{K_0p_0}\otimes R_{p_0K_0}\to R_{K_1K_1}$ by:
\[
\kappa(\mathbf{c}_1a_1,a_2\mathbf{c_2}) = \sum_{|a_1|+|a_2|-|\mathbf{b}|=0}|\M^{\rm co}(a_1,a_2;\mathbf{b})|\Phi(\mathbf{c_1})\mathbf{b}\Phi(\mathbf{c_2}).
\]

\begin{proposition}[\cite{E08}]
Given a Legendrian isotopy $(\Lambda_{K_t},\Lambda_{p_t})$, $t\le 0\le
1$, and product maps $\mu_0$ and $\mu_1$ for $\Lambda_{K_0} \cup
\Lambda_{p_0}$ and $\Lambda_{K_1} \cup \Lambda_{p_1}$ as defined
above, we have:
\begin{equation}\label{eq:prod+cobmap}
\Phi\circ\mu_0 - \mu_1\circ \Phi - \kappa\circ (\partial_0\otimes 1 + 1\otimes\partial_0) +  \partial_1\circ\kappa = 0.
\end{equation}
Thus, on the level of homology, $\Phi$ sends $\mu_0$ to $\mu_1$.
\end{proposition}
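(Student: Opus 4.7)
The plan is to establish the chain-level identity (\ref{eq:prod+cobmap}) by analyzing the boundary of the compactified one-dimensional cobordism moduli spaces $\M^{\mathrm{co}}(a_1,a_2;\mathbf{b})$ with $|a_1|+|a_2|-|\mathbf{b}|=1$. Transversality is handled exactly as in Proposition~\ref{prop:mu}: exactly one positive puncture is asymptotic to a chord to $\Lambda_p$ from $\Lambda_K$ and exactly one to a chord in the reverse direction, so the disks are somewhere injective and a generic domain-dependent almost complex structure on the cobordism makes each such moduli space into a smooth $1$-manifold.

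The next step is to identify the boundary of the SFT compactification. As in the proof of Proposition~\ref{prop:mu}, no boundary-node degeneration can occur: a splitting arc that separates the two positive punctures must join a boundary point mapping to $L_p$ to one mapping to $L_K$, and since these are disjoint Lagrangian components no boundary node can form. The boundary therefore consists entirely of multi-level broken configurations, and the filtration by the number of mixed chords (which $\Phi$ strictly preserves) forces these into exactly four types. A top-level symplectization disk with both positive punctures $a_1,a_2$ and pure negative chords (contributing to $\mu_0$), glued to cobordism disks each with a single positive puncture (contributing to $\Phi$), yields $\Phi\circ\mu_0$. A top-level symplectization disk with a single positive puncture ($a_1$ or $a_2$) and exactly one mixed negative chord (contributing to $\partial_0$), glued to a two-positive cobordism disk (contributing to $\kappa$), yields $\kappa\circ(\partial_0\otimes 1+1\otimes\partial_0)$. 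Dually, two cobordism disks with single positives $a_1$ and $a_2$ and one mixed negative chord each (contributing to $\Phi$) sitting above a bottom-level symplectization disk with two mixed positives and pure negatives (contributing to $\mu_1$) yield $\mu_1\circ\Phi$, and a two-positive cobordism disk at $a_1,a_2$ (contributing to $\kappa$) above a bottom-level symplectization disk with a single pure positive puncture (contributing to $\partial_1$) yields $\partial_1\circ\kappa$. The signed sum of boundary points of the oriented 1-manifold vanishes, which is the claimed identity.

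The main obstacle is the coherent sign accounting. The capping-operator ordering $C_{a_1}^{+}\oplus L\oplus C^{-}_{b_1}\oplus\cdots\oplus C^{-}_{b_m}\oplus C_{a_2}^{+}$ fixed in Section~\ref{ssec:product} determines specific Koszul signs at each of the four gluings, and one must verify that these reproduce the signs appearing in (\ref{eq:prod+cobmap}) via the linear gluing exact sequences and coherent orientation scheme of \cite{EESori}; this is a standard but delicate computation. Once the chain identity is in hand, the homological statement is immediate: for cycles $x\in R_{K_0 p_0}$ and $y\in R_{p_0 K_0}$ the $\kappa\circ(\partial_0\otimes 1+1\otimes\partial_0)$ term vanishes, so $\Phi(\mu_0(x,y))-\mu_1(\Phi(x),\Phi(y))=-\partial_1\kappa(x,y)$ is a boundary in $R_{K_1K_1}$, and hence $\Phi$ intertwines $\mu_0$ with $\mu_1$ on the level of homology.
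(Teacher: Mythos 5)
Your proposal is correct and follows essentially the same route as the paper: both identify the terms of \eqref{eq:prod+cobmap} with the two-level SFT boundary strata of the $1$-dimensional cobordism moduli spaces $\M^{\rm co}(a_1,a_2;\mathbf{b})$, rule out boundary-node degeneration because a splitting arc separating the two positive punctures would have to join the disjoint components $L_p$ and $L_K$, and then read off the homological statement. Your version simply spells out the four boundary configurations and the sign bookkeeping in more detail than the paper does.
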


\begin{proof}
The proof follows from an analysis of the boundary of $1$-dimensional
moduli spaces of the form $\M^{\rm co}(a_1,a_2;\mathbf{b})$. By gluing
and SFT compactness the boundary of such a moduli space consists of
two-level buildings. Hence the terms contributing to
\eqref{eq:prod+cobmap} are in 1-to-1 correspondence with the
boundary of an oriented 1-manifold. The homology statement
 follows from \eqref{eq:prod+cobmap} together with the fact that
 $\Phi$ is a quasi-isomorphism respecting the filtration.
\end{proof}

We now connect this general discussion of the product $\mu$ with the
KCH-triple. Recall from Section~\ref{ssec:enhanced} that we have:
\begin{align*}
R_{KK} &\cong H_0(\A_{\Lambda_K,\Lambda_K}^{(0)}) &
R_{Kp} &\cong H_0(\A_{\Lambda_K,\Lambda_p}^{(1)}) &
R_{pK} &\cong H_1(\A_{\Lambda_p,\Lambda_K}^{(1)}).
\end{align*}
Then the product gives a map
\[
\mu :\thinspace R_{Kp} \otimes R_{pK} \to R_{KK}.
\]
We can now write the invariance property as follows.

\begin{proposition}
The $R_{KK}$-bimodule map $\mu :\thinspace R_{Kp} \otimes_{\Z} R_{pK} \to R_{KK}$ is invariant under Legendrian isotopy of
$\Lambda_K \cup \Lambda_p$: given isotopic $\Lambda_{K_0} \cup
\Lambda_p$ and $\Lambda_{K_1} \cup \Lambda_p$ and isomorphisms
$\psi_{KK},\psi_{Kp},\psi_{pK}$ of KCH-triples as in
Proposition~\ref{prop:KCH-triple}, we have
\[
\psi_{KK} \circ \mu_0 = \mu_1 \circ (\psi_{Kp} \otimes \psi_{pK}).
\]
\end{proposition}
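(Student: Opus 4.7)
The plan is to deduce this proposition directly from the chain-level identity \eqref{eq:prod+cobmap} established in the preceding proposition, together with the fact that the cobordism map $\Phi$ induces the isomorphisms $\psi_{KK},\psi_{Kp},\psi_{pK}$ between the KCH-triples.

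First I would observe that the Legendrian isotopy between $\Lambda_{K_0}\cup\Lambda_p$ and $\Lambda_{K_1}\cup\Lambda_p$ gives an exact Lagrangian cobordism inducing a DGA quasi-isomorphism $\Phi\colon\A_{\Lambda_{K_0}\cup\Lambda_p}\to\A_{\Lambda_{K_1}\cup\Lambda_p}$ that respects both the decomposition $\bigoplus_{i,j}\A_{\Lambda_i,\Lambda_j}$ and the filtration $\F^k$ by the number of mixed chords; this is because the moduli spaces of disks in the cobordism, like those in the symplectization, have one positive puncture and the boundary reads off a composable word. Passing to the induced quasi-isomorphisms on the associated graded pieces $\A^{(0)}_{\Lambda_K,\Lambda_K}$, $\A^{(1)}_{\Lambda_K,\Lambda_p}$, $\A^{(1)}_{\Lambda_p,\Lambda_K}$ and then to the relevant homological degrees $0,0,1$, we obtain three maps on homology which, by the invariance of Legendrian contact homology under Legendrian isotopy together with the identifications of Section~\ref{ssec:enhanced}, must coincide with $\psi_{KK}$, $\psi_{Kp}$, and $\psi_{pK}$ respectively (any two such isomorphisms arising from the same isotopy agree because they are both induced by the same cobordism, up to chain homotopy).

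Next I would apply the chain-level equation \eqref{eq:prod+cobmap}:
\[
\Phi\circ\mu_0 - \mu_1\circ(\Phi\otimes\Phi) = \kappa\circ(\partial_0\otimes 1 + 1\otimes\partial_0) - \partial_1\circ\kappa.
\]
Evaluating on a cycle $\mathbf{c}_1 a_1 \otimes a_2\mathbf{c}_2$ representing a class in $R_{K_0p_0}\otimes R_{p_0K_0}$, the two terms on the right-hand side vanish in homology: the first because $\partial_0$ annihilates cycles, and the second because the image lies in the boundary $\partial_1\A_{\Lambda_{K_1},\Lambda_{K_1}}^{(0)}$. Taking homology classes and using the identifications of the previous paragraph then yields the identity $\psi_{KK}\circ\mu_0 = \mu_1\circ(\psi_{Kp}\otimes\psi_{pK})$.

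The main subtlety, rather than a true obstacle, is the identification of the isotopy-induced maps on the KCH-triple with $\psi_{KK},\psi_{Kp},\psi_{pK}$, since a priori the isomorphisms in Proposition~\ref{prop:KCH-triple} are defined only abstractly up to the ambiguity of the DGA quasi-isomorphism induced by the isotopy. To handle this cleanly I would package Propositions~\ref{prop:KCH-triple} and the present statement as a single assertion: any isotopy determines a cobordism map $\Phi$, and the commuting square $\psi_{KK}\circ\mu_0 = \mu_1\circ(\psi_{Kp}\otimes\psi_{pK})$ holds for the particular triple of isomorphisms induced by $\Phi$. Bimodule compatibility of $\mu$ (needed to assert $\mu$ is an $R_{KK}$-bimodule map) follows immediately from the concatenation structure of the product $\mu(\mathbf{c}_1 a_1,a_2\mathbf{c}_2)=\mathbf{c}_1\,\mathbf{b}\,\mathbf{c}_2$ on words with a single mixed chord, so the statement is preserved by the identification above.
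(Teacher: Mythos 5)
Your proposal is correct and follows essentially the same route as the paper, which states this proposition without a separate proof precisely because it is the homology-level consequence of the chain identity \eqref{eq:prod+cobmap}: the $\kappa$-terms vanish on cycles and boundaries, and the isomorphisms $\psi_{KK},\psi_{Kp},\psi_{pK}$ are the ones induced by the cobordism map $\Phi$. Your added care in pinning down that the $\psi$'s of Proposition~\ref{prop:KCH-triple} are taken to be those induced by $\Phi$ is exactly the implicit convention the paper uses.
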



\section{String Topology}
\label{sec:stringtop}
In this section we will describe how to extend the results from
\cite{CELN} to enhanced knot contact homology with the product
$\mu$. This will allow us to interpret $\LCH_*(\Lambda_K \cup
\Lambda_p)$ in low degree in terms of a version of string topology and homotopy data; in
particular, we will proceed in Section~\ref{sec:homotopy} to use string topology arguments
to write the KCH-triple in terms of $\pi_1(\R^3\setminus K)$.

The main result of \cite{CELN} is an isomorphism in degree $0$
between the Legendrian contact homology of $\Lambda_K$ and a ``string
homology'' defined using chains of broken strings, where a broken
string is a loop in the union $\R^3 \cup L_K$ of the
zero section and the conormal bundle of $L_K$ in $T^*\R^3$. Here we
will give a modification of this approach that produces an
isomorphism between the Legendrian contact homology of $\Lambda_K \cup
\Lambda_p$ (in the appropriate degree) and string homology for broken
strings in $\R^3 \cup L_K \cup L_p$, where $L_p$ is the conormal of $p$ in $T^*\R^3$, i.e.\ the fiber $T_{p}^{\ast}\R^{3}$. We will then prove that the product $\mu$ defined
in Section~\ref{ssec:product} corresponds to the Pontryagin product on string
homology under this isomorphism.

The discussion in this section closely parallels the treatment in \cite{CELN}, as our setup is nearly identical to the one there, differing only in the introduction of $L_p$. Where convenient, we adopt notation from \cite{CELN} to make the correspondence clearer.

\subsection{Broken strings}
\label{ssec:broken-strings}

\begin{figure}
\labellist
\small\hair 2pt
\pinlabel $T^*Q$ at 213 23
\pinlabel ${\color{red} Q}$ at 239 165
\pinlabel ${\color{green} L_p}$ at 74 55
\pinlabel ${\color{blue} N=L_K}$ at 380 55
\pinlabel $(x_0,\xi_0)$ at 310 232
\pinlabel $(p,\xi)$ at 120 230
\pinlabel $p$ at 80 131
\pinlabel $K$ at 359 135
\endlabellist
\centering
\includegraphics[width=0.6\textwidth]{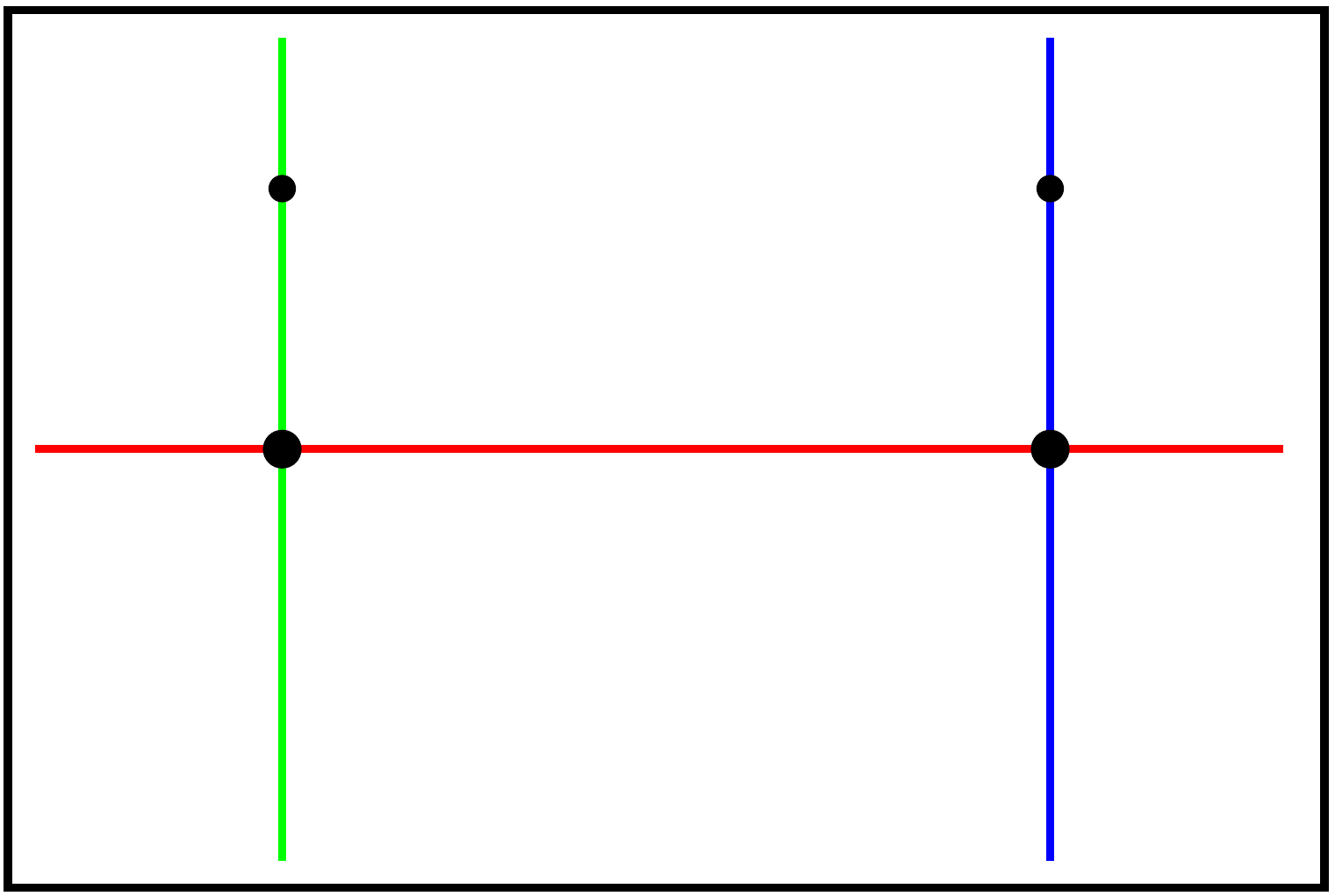}
\caption{The cotangent bundle $T^*Q$ with Lagrangians $Q,N,L_p$.}
\label{fig:cotangent}
\end{figure}

Here we recall the definition of broken strings from \cite{CELN},
suitably modified for our purposes. Let $K \subset \R^3$ be a knot and
$p \in \R^3$ be a point in the complement of $K$. Write $Q = \R^3$ and
view $Q$ as the zero section in $T^*Q$, and
let $N = L_K \subset T^*Q$ be the conormal bundle to $K$, while $L_p$
is the cotangent fiber $T_p^*Q$. We then have three Lagrangians
$Q$, $N$, and $L_p$ in $T^*Q$; $N$ and $L_p$ are disjoint, $Q$ and $L_p$ intersect
transversely at $p$, and $Q$ and $N$ intersect cleanly along $K$. See
Figure~\ref{fig:cotangent}.

Fix base points $(x_0,\xi_0) \in N \setminus K$ and $(p,\xi) \in L_p
\setminus \{p\}$. If we use a metric to identify $T^*Q$ and $TQ$, then
these points become $(x_0,v_0)$ with $v_0 \in T_{x_0}N$ and $(p,v)$
with $v \in T_pQ$. This metric also gives a diffeomorphism between a neighborhood of the zero section in $N$ (which in turn is diffeomorphic to all of $N$) and
a tubular neighborhood of $K$ in $Q$, and we can view $Q \cup N$ as
the disjoint union of $Q$ and $N \subset Q$ glued along $K$. This
allows us to identify $T_{x}N$ with $T_{x}Q$ for $x\in K$. Similarly
we view $Q \cup N \cup L_p$ as the disjoint union of $Q \cup N$ and
$L_p$ with $p\in Q$ and $0 \in L_p$ identified, and the metric
identifies $T_{p}Q$ with $T_0 L_p = T_{p}^*Q$.

Now consider a piecewise $C^1$ path in $Q \cup N \cup L_p$. This path
can move between $Q$ and $N$ (in either direction) at a point on $K =
Q \cap N$, and between
$Q$ and $L_p$ at $p$; we call the points where the path changes
components \textit{switches}, either at $K$ or at $p$.

\begin{definition}
A \textit{broken string} is a piecewise $C^1$ path $s :\thinspace
[a,b] \to Q \cup N \cup L_p$ such that:
\begin{itemize}
\item
the endpoints $s(a),s(b)$ are each at one of the two
base points $(x_0,v_0) \in N$ or $(p,v) \in L_p$;
\item
if $s(t_0)$ is a switch at $K$ from $N$ to $Q$ (i.e., for small $\epsilon>0$, $s((t_0-\epsilon,t_0])
\subset N$ and $s([t_0,t_0+\epsilon)) \subset Q$), then:
\[
\lim_{t\to {t_0}^-} (s'(t))^{\text{normal}} =
\lim_{t\to {t_0}^+} (s'(t))^{\text{normal}},
\]
where we identify $T_{s(t_0)}N$ with $T_{s(t_0)}Q$ and
$v^{\text{normal}}$ denotes the component of $v$ normal to $K$
with respect to the metric on $Q$;
\item
if $s(t_0)$ is a switch at $K$ from $Q$ to $N$, then:
\[
\lim_{t\to {t_0}^-} (s'(t))^{\text{normal}} = -
 \lim_{t\to {t_0}^+} (s'(t))^{\text{normal}};
\]
\item
if $s(t_0)$ is a switch at $p$ from $L_p$ to $Q$, then:
\[
\lim_{t\to {t_0}^-} s'(t) =
 \lim_{t\to {t_0}^+} s'(t);
\]
\item
if $s(t_0)$ is a switch at $p$ from $Q$ to $L_p$, then:
\[
\lim_{t\to {t_0}^-} s'(t) = -
 \lim_{t\to {t_0}^+} s'(t).
\]
\end{itemize}
\end{definition}

\noindent
The portions of $s$ in $Q$ (respectively $N$, $L_p$) are called $Q$-strings
(respectively $N$-strings, $L_p$-strings).

\begin{remark}
A broken string models the boundary of a holomorphic disk in $T^*Q$
with boundary on $Q \cup N \cup L_p$ and one positive puncture at
infinity at a Reeb chord for $\Lambda_K \cup \Lambda_p$. The condition
on the derivatives at a switch follows the
behavior of the boundary of such a disk at a point where the boundary
switches between $Q$ and $N$, or between $Q$ and $L_p$: if $v_{\rm in}$ and $v_{\rm out}$ denote the incoming and outgoing tangent vectors of a broken string at a switch then $v_{\rm out}=Jv_{\rm in}$, where $J$ is the almost complex structure along the $0$-section induced by the metric.
\end{remark}

If we project from $T^*Q$ to $Q$, then the endpoints of a broken
string are each either at $p$ or at the point on $K$ that is the
projection of $x_0$. With this in mind, we call a broken string $s$:
\begin{itemize}
\item
a $KK$ broken string if $s(a)=s(b)=(x_0,v_0)$
\item
a $Kp$ broken string if $s(a)=(x_0,v_0)$ and $s(b)=(p,v)$
\item
a $pK$ broken string if $s(a)=(p,v)$ and $s(b)=(x_0,v_0)$
\item
a $pp$ broken string if $s(a)=s(b)=(p,v)$.
\end{itemize}

\subsection{String homology}
\label{ssec:string-homology}

We now construct a complex from broken strings whose homology might be called ``string homology''; in Section~\ref{ssec:string-kch} below, we will describe an isomorphism between this homology and enhanced knot contact homology.

For $\ell \geq 0$, let $\Sigma_\ell$ denote the space of broken
strings with $\ell$ switches at $p$ (note that we do not count
switches at $K$ here), equipped with the $C^{k}$-topology for some $k\ge 3$. We write
\[
\Sigma_\ell = \Sigma_\ell^{KK} \sqcup \Sigma_\ell^{Kp} \sqcup
\Sigma_\ell^{pK} \sqcup \Sigma_\ell^{pp}
\]
where $\Sigma_\ell^{ij}$ denotes the subset of $\Sigma_\ell$
corresponding to $ij$ broken strings for $i,j\in\{K,p\}$, and then
\[
C_k(\Sigma_\ell) = C_k^{KK}(\Sigma_\ell) \oplus
C_k^{Kp}(\Sigma_\ell) \oplus
C_k^{pK}(\Sigma_\ell) \oplus
C_k^{pp}(\Sigma_\ell)
\]
for the free $\Z$-module generated by generic $k$-dimensional singular
simplices in $\Sigma_\ell$ ($C_k^{ij}$ is the summand corresponding to
$ij$ broken strings). Here ``generic'' refers to simplices that
satisfy the appropriate transversality conditions at switches and with respect to $K$ and to $p$; compare \cite[Definition~5.3]{CELN}.

\begin{figure}
\labellist
\small\hair 2pt
\pinlabel $K$ at 106 350
\pinlabel $K$ at 392 350
\pinlabel $K$ at 678 350
\pinlabel $K$ at 972 350
\pinlabel $p$ at 58 62
\pinlabel $p$ at 348 62
\pinlabel $p$ at 636 62
\pinlabel $p$ at 924 62
\pinlabel $\delta^K_Q$ at 216 314
\pinlabel $\delta^K_N$ at 791 314
\pinlabel $\delta^p_Q$ at 216 98
\pinlabel $\delta^p_{L_p}$ at 791 98
\pinlabel ${\color{red} Q}$ at 22 339
\pinlabel ${\color{red} Q}$ at 313 339
\pinlabel ${\color{red} Q}$ at 392 245
\pinlabel ${\color{red} Q}$ at 924 220
\pinlabel ${\color{red} Q}$ at 22 126
\pinlabel ${\color{red} Q}$ at 313 126
\pinlabel ${\color{red} Q}$ at 392 33
\pinlabel ${\color{red} Q}$ at 924 2
\pinlabel ${\color{blue} N}$ at 294 275
\pinlabel ${\color{blue} N}$ at 606 339
\pinlabel ${\color{blue} N}$ at 894 339
\pinlabel ${\color{blue} N}$ at 981 245
\pinlabel ${\color{green} L_p}$ at 298 58
\pinlabel ${\color{green} L_p}$ at 606 123
\pinlabel ${\color{green} L_p}$ at 894 123
\pinlabel ${\color{green} L_p}$ at 981 33
\endlabellist
\centering
\includegraphics[width=\textwidth]{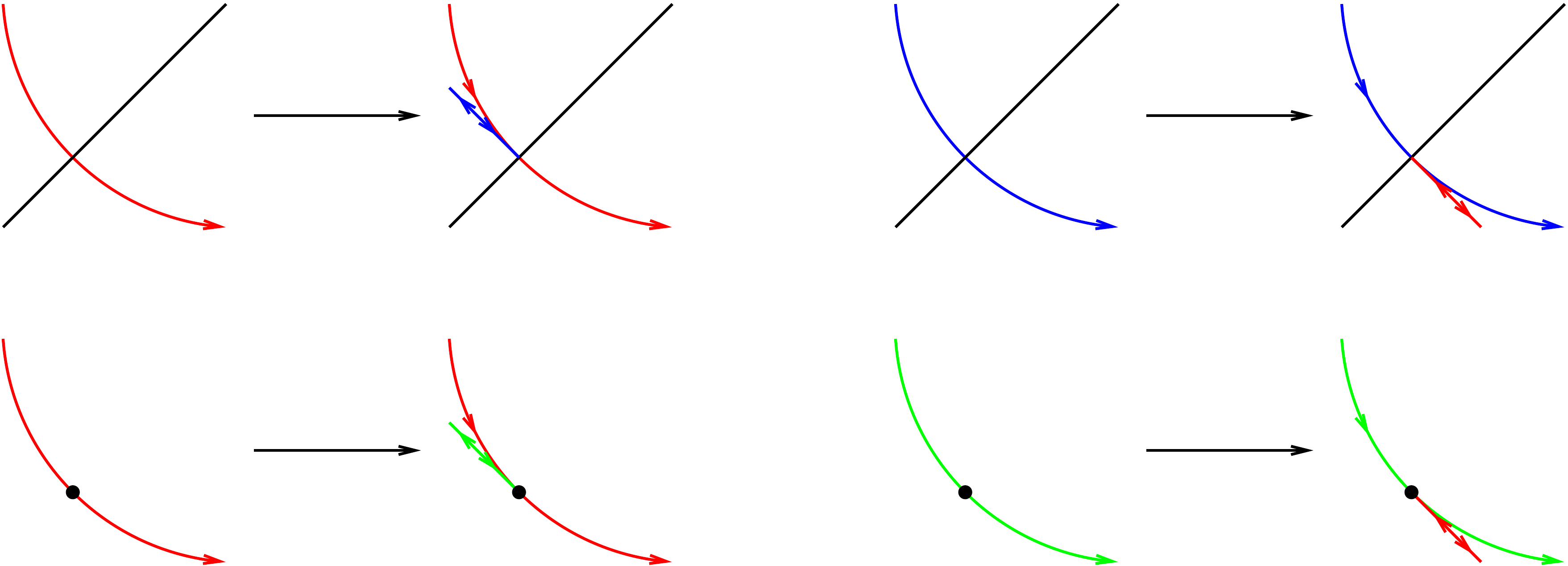}
\caption{The maps $\delta^K_Q$ (respectively $\delta^K_N$, $\delta^p_Q$,
  $\delta^p_{L_p}$) insert an $N$-string ($Q$-string, $L_p$-string, $Q$-string) at an interior point of a $Q$-string ($N$-string, $Q$-string, $L_p$-string) that lies on $K$ ($K$, $p$, $p$).}
\label{fig:delta-maps}
\end{figure}

In addition to the usual boundary operator $\partial :\thinspace
C_k(\Sigma_\ell) \to C_{k-1}(\Sigma_\ell)$ on singular simplices, there are two string
operations
\[
\delta_Q^K,~\delta_N^K :\thinspace C_k(\Sigma_\ell) \to
C_{k-1}(\Sigma_\ell)
\]
defined for $k \leq 2$ in \cite[Section 5.3]{CELN} (where they are called $\delta_Q$,
$\delta_N$). We refer to \cite{CELN} for details, but qualitatively
these operations take a generic $k$-dimensional family of broken
strings, identify the subfamily consisting of broken strings where a
$Q$-string or $N$-string has an interior point in $K$, and
insert a ``spike'' in $N$ or $Q$ at this point; see Figure~\ref{fig:delta-maps}. We note that this
interior intersection condition is codimension $1$, and that adding a
spike increases the number of switches at $K$ by $2$.
In our setting, there are two more string operations
\[
\delta_Q^p,~\delta_{L_p}^p :\thinspace C_k(\Sigma_\ell) \to
C_{k-2}(\Sigma_{\ell+2})
\]
that are defined in the same way as $\delta_Q^K$, $\delta_N^K$, but
inserting spikes in $L_p$ or $Q$ where a $Q$-string or $L_p$-string
has an interior point at $p$; see Figure~\ref{fig:delta-maps} again. Note now that the interior intersection
condition is codimension $2$, and that adding a spike increases the
number of switches at $p$ by $2$.

We then have the following result, which is a direct analogue of
Proposition~5.8 from \cite{CELN} and is proved in the same way.

\begin{lemma}
On generic $2$-chains, the operations $\partial$,
$\delta_{Q}^K+\delta_{N}^K$, and $\delta_{Q}^{p}+\delta_{L_p}^p$ each
have square $0$ and pairwise anticommute.
\label{lma:string}
In particular, we have
\begin{align*}
(\partial+\delta_{Q}^K+\delta_{N}^K)^{2}&=0,\\
(\partial+\delta_{Q}^K+\delta_{N}^K)(\delta_{Q}^{p}+\delta_{L_p}^p)
+(\delta_{Q}^{p}+\delta_{L_p}^p)(\partial+\delta_{Q}^K+\delta_{N}^K)&=0,\\
(\delta_{Q}^{p}+\delta_{L_p}^p)^2&=0.
\end{align*}
\end{lemma}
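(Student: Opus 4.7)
The plan is to reduce all three identities to \cite[Proposition 5.8]{CELN} together with a simple dimension count. The operations $\partial$, $\delta^K := \delta_Q^K + \delta_N^K$, and $\delta^p := \delta_Q^p + \delta_{L_p}^p$ are defined by strata of codimension $1$, $1$, and $2$ respectively in families of broken strings: a $K$-insertion requires one interior point of a $Q$- or $N$-string to lie on the $2$-dimensional submanifold $K\subset \R^3$, whereas a $p$-insertion requires one interior point of a $Q$- or $L_p$-string to coincide with the point $p$. Correspondingly, $\partial$ and $\delta^K$ lower chain degree by $1$, while $\delta^p$ lowers it by $2$.

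For the first identity $(\partial+\delta^K)^2=0$, I would argue that the proof is formally identical to \cite[Proposition 5.8]{CELN}, which establishes this equation for broken strings in $Q\cup N$. Expanding the square yields $\partial^2$, $(\delta^K)^2$, and $\partial\delta^K+\delta^K\partial$, and each summand is to be matched with the signed boundary count of an appropriate generic $1$-parameter family of broken strings, every boundary stratum of which lies in the image of one of the three compositions. In our enlarged setting a broken string may additionally contain $L_p$-segments and switches at $p$, but since $L_p\cap K = \emptyset$ and $p \notin K$, these extra features contribute no new codimension-$1$ stratum relevant to $\partial$ or $\delta^K$; they are carried along passively. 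Hence the CELN argument applies verbatim.

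The second and third identities hold automatically on generic $2$-chains by dimension. If $c \in C_2(\Sigma_\ell)$ then $\delta^p c \in C_0(\Sigma_{\ell+2})$, so $(\partial+\delta^K)\delta^p c \in C_{-1} = 0$, and likewise $\delta^p(\partial+\delta^K)c \in C_{-1} = 0$ since $(\partial+\delta^K)c \in C_1(\Sigma_\ell)$. Similarly $(\delta^p)^2 c \in C_{-2} = 0$. So both identities reduce to $0=0$ on generic $2$-chains.

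The main obstacle is therefore the verification that the CELN proof of the first identity really does transport without modification to broken strings in $Q\cup N\cup L_p$. Concretely, one must check that a generic $1$-parameter family in $\Sigma_\ell$ cannot degenerate jointly at $K$ and at $p$ (or along $L_p$), and that a pure $K$-degeneration is unaffected by the presence of $L_p$-segments. Both facts are local: the first is a codimension-$\geq 2$ phenomenon obtained by combining the codimension-$1$ condition at $K$ with the codimension-$2$ condition at $p$, and the second is immediate because $L_p$ lies in a region of $T^*Q$ disjoint from a neighborhood of $K$. Once these geometric observations are in place, the CELN argument carries over unchanged and the lemma follows.
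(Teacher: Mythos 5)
Your proposal is correct and follows essentially the same route as the paper, which simply asserts that the lemma ``is a direct analogue of Proposition~5.8 from \cite{CELN} and is proved in the same way'': the first identity is the CELN boundary analysis carried over unchanged (the $L_p$-segments being passive since $L_p$ is disjoint from a neighborhood of $K$), and the identities involving $\delta_Q^p+\delta_{L_p}^p$ vanish term by term on generic $2$-chains by the codimension count, exactly as the paper implicitly uses later when it notes these operations ``do not appear for degree reasons.'' One small slip: $K$ is a $1$-dimensional (codimension-$2$) submanifold of $\R^3$, not $2$-dimensional, though the codimension-$1$ count you use for the resulting condition on strings is the correct one.
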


Lemma~\ref{lma:string} allows us to construct a complex out of broken
strings in the following way. For $m \in \frac{1}{2} \Z$, define
\[
C_m = \bigoplus_{k+\ell/2=m} C_k(\Sigma_\ell).
\]
By consideration of the parity of the number of switches at $p$, we
can write $C_m = C_m^{KK} \oplus C_m^{pp}$ when $m$ is an integer and
$C_m = C_m^{Kp} \oplus C_m^{pK}$ when $m$ is a half-integer. We define
a shifted complex $\tilde{C}_*$, $*\in\Z$, by:
\begin{gather*}
\tilde{C}_m^{KK} = C_m^{KK},
\hspace{3ex}
\tilde{C}_m^{Kp} = C_{m+1/2}^{Kp},
\hspace{3ex}
\tilde{C}_m^{pK} = C_{m-1/2}^{pK},
\hspace{3ex}
\tilde{C}_m^{pp} = C_m^{pp}, \\
\tilde{C}_m = \tilde{C}_m^{KK} \oplus \tilde{C}_m^{Kp} \oplus
\tilde{C}_m^{pK} \oplus \tilde{C}_m^{pp};
\end{gather*}
that is, we shift the grading up by $1/2$ if the beginning point is
$p$ and down by $1/2$ if the endpoint is $p$. By
Lemma~\ref{lma:string},
$\partial+\delta_{Q}^K+\delta_{N}^K+\delta_{Q}^{p}+\delta_{L_p}^p$ is
a differential on $\tilde{C}_*$ that lowers degree by $1$.

\begin{remark}\label{r:rotations}
The $\frac{1}{2}$-grading for strings broken at $p$ has the following geometric counterpart for holomorphic disks with switching Lagrangian boundary conditions on $L_p\cup Q$ and a punctures at the intersection point $p = L_p \cap Q$. Consider a disk $u\colon (D,\partial D)\to (T^{\ast}Q,L_p\cup Q)$ with $m$ punctures mapping to $p$ and with a positive puncture asymptotic to a Reeb chord $a$. The formal dimension of $u$ can then be expressed as follows, see \cite[Theorem A.1]{CEL}:
\begin{equation}\label{eq:dimcorner}
\dim(u) = (\dim(Q)-3) + \mu + m+1 = \mu + m + 1,
\end{equation}
where $\mu$ is the Maslov index of the loop of Lagrangian tangent planes along the boundary of $u$. Here we close this loop by the capping operator at $a$ and as follows at the punctures mapping to $p$: connect the incoming tangent plane ($T^{\ast}_{p}Q$ or $T^{\ast} L_{p}$) to the outgoing tangent plane ($T^{\ast} L_{p}$ or $T^{\ast}_{p}Q$) with a negative rotation along the K\"ahler angle (i.e.~act by $e^{-i\frac{\pi}{2}s}$, $0\le s\le 1$). In the case at hand the tangent planes along $Q$ and $L_{p}$ are stationary with respect to the standard trivialization and the dimension formula reduces to
\begin{equation}\label{eq:halfcontrib}
\dim(u) = |a| + m(-\tfrac{3}{2}+1). 
\end{equation}	   
(Note that $m$ is even, i.e., there is an even number of switches, because both the first and the last boundary component map to $L_{p}$.) 
In the dimension formula \eqref{eq:halfcontrib} there is a contribution of $-\frac{1}{2}$. In order to have each puncture contributing with an integer we can for example deform $L_p$ so that the K\"ahler angles between $Q$ and $L_{p}$ become $(\epsilon,\frac{\pi}{2},\frac{\pi}{2})$ instead of the original $(\frac{\pi}{2},\frac{\pi}{2},\frac{\pi}{2})$. This way the contribution to $\mu$ in \eqref{eq:dimcorner} for the punctures at $p$ switching from $L_{p}$ to $Q$ becomes $-2$ and the contribution for those switching in the opposite direction $-1$, giving total dimension contributions $-1$ and $0$, respectively. This deformation and the corresponding grading shift are chosen to match our choice of capping path connecting $\Lambda_{K}$ to $\Lambda_p$: that is, so that both Reeb chords that start at $L_p$ get shifted up by $1$ compared to the Morse grading and so that chains of broken strings starting at $p$ are also shifted up.
\end{remark}

\subsection{Switches at a point in an example}

On the complex of broken strings, there are four string operations, $\delta_Q^K$, $\delta_N^K$, $\delta_Q^p$, and $\delta_{L_p}^p$. The two that introduce switches on the knot, $\delta_Q^K$ and $\delta_N^K$, have appeared before and are studied at length in \cite{CELN}. For the other two, $\delta_Q^p$ and $\delta_{L_p}^p$, which introduce switches at a point, the only property we need for our main argument is their codimension; in the following section, Section~\ref{ssec:string-kch}, we use this to prove an isomorphism to knot contact homology.
Here we examine $\delta_Q^p$ and $\delta_{L_p}^p$ more closely in a model case. This is a digression from the main argument and can be skipped without loss of continuity, but provides some context for these operations within contact geometry.

Consider $Q=S^n$ and as usual let $L_p \subset T^*S^n$ be the cotangent fiber over $p$, with $\Lambda_p \subset ST^*S^n$ the Legendrian sphere given by the unit cotangent fiber. By the surgery result from \cite[\S 5.5]{BEE}, we can compute the DGA of $\Lambda_p$ in $ST^*S^n$ via the DGA for the Legendrian unknot $U\subset S^{2n-1}$, where $S^{2n-1}$ is the standard contact $(2n-1)$-sphere, i.e., the contact boundary of the standard symplectic $2n$-ball, with the differential in the latter DGA twisted by a point condition at $p$.

There is (effectively) only one Reeb chord $a$ of $U$ of grading $|a|=n-1$, see \cite{BEE}, and the differential is $\partial a = p$. To see this one can use the flow tree description of holomorphic disks: it is easy to see that for the standard front of the unknot there is exactly one rigid point constrained Morse flow tree with positive puncture at $a$.
Thus the DGA of $\Lambda_{p}$ is generated by chords $a^r$, $r \geq 1$, of grading $r(n-1) + (n-2)$, and the differential is
\[
\partial (a^{r}) = \sum_{j=2}^{r} a^{j-1}\cdot a^{r-j}.
\]

We claim that this DGA is chain isomorphic to the complex of broken strings in $Q \cup L_p$ with differential given by $\partial+\delta_{Q}^{p}+\delta_{L_p}^{p}$. For the latter, note that $L_p$ is contractible so we simply forget the $N$-strings and think of the chains of broken strings as the tensor algebra of chains on the based loop space of $S^n$ with differential $\partial+\delta_p$, where $\delta_p$ splits a chain over the locus where its evaluation map hits $p$.  By Morse theory, the space of non-constant based loops in $S^n$ is a cell complex with a cell in dimensions
\[
(n-1),\; 2(n-1),\; 3(n-1),\; 4(n-1),\; \dots.
\]
For degree reasons there are only quadratic terms in the differential $\partial+\delta_p$ and in order to compute $\delta_p$ we need to see the unstable manifolds of the cells that correspond to Morse flow in the Bott-manifolds followed by shrinking the loops over half-disks. It is not hard to see that $\delta_{p}$ acts on the Morse cells by splitting the cell of dimension $r(n-1)$ into two cells of dimensions $j(n-1)$ and $k(n-1)$, where $j+k=r-1$, in all possible ways.

We thus conclude that the complex of broken strings in $Q \cup L_p$ is indeed isomorphic to the DGA of the cosphere $\Lambda_p$. Furthermore, one can check that this isomorphism is induced by the map that associates to a Reeb chord $c$ of $\partial L_p$ the chain carried by the moduli-space of disks with positive puncture at $c$ and switching boundary condition on $Q\cup L_p$. Note that each pair of switches in the boundary of such a disk contributes $-(n-2)$ to the dimension of the moduli space, see Remark \ref{r:rotations}, which explains the difference in grading between the generators of the DGA of $\Lambda_p$ and generators of the complex of chains of broken strings ($r(n-1)+(n-2)$ versus $r(n-1)$ for $r \geq 1$).

\subsection{String homology and enhanced knot contact homology}
\label{ssec:string-kch}

In \cite{CELN} the DGA $\A_{\Lambda_K}$ was related to string homology via a chain map defined through a count of holomorphic disks with switching boundary condition. Here we similarly relate $\A_{\Lambda_{K}\cup\Lambda_{p}}$ to string homology. More precisely, if $a$ is a Reeb chord of $\Lambda_{K}\cup\Lambda_{p}$ then we let $\mathcal{M}^{\rm sw}(a)$ denote the moduli space of holomorphic disks in $T^{\ast}\R^{3}$ with one positive puncture asymptotic to the Reeb chord $a$ at infinity, and such that the disk has switching boundary on $Q \cup N \cup L_p$: that is, the boundary of the disk lies on $Q \cup N \cup L_p$, and there are several punctures where the boundary switches between the Lagrangians $L_p$ and $Q$ or between $L_K$ and $Q$, in either direction.

\begin{figure}
\labellist
\small\hair 2pt
\pinlabel ${\color{red} Q}$ at 206 122
\pinlabel ${\color{green} L_p}$ at 64 50
\pinlabel ${\color{blue} L_K}$ at 360 50
\pinlabel $p$ at 70 122
\pinlabel $K$ at 352 122
\pinlabel $a$ at 206 275
\pinlabel ${\color{magenta} s}$ at 169 157
\endlabellist
\centering
\includegraphics[width=0.4\textwidth]{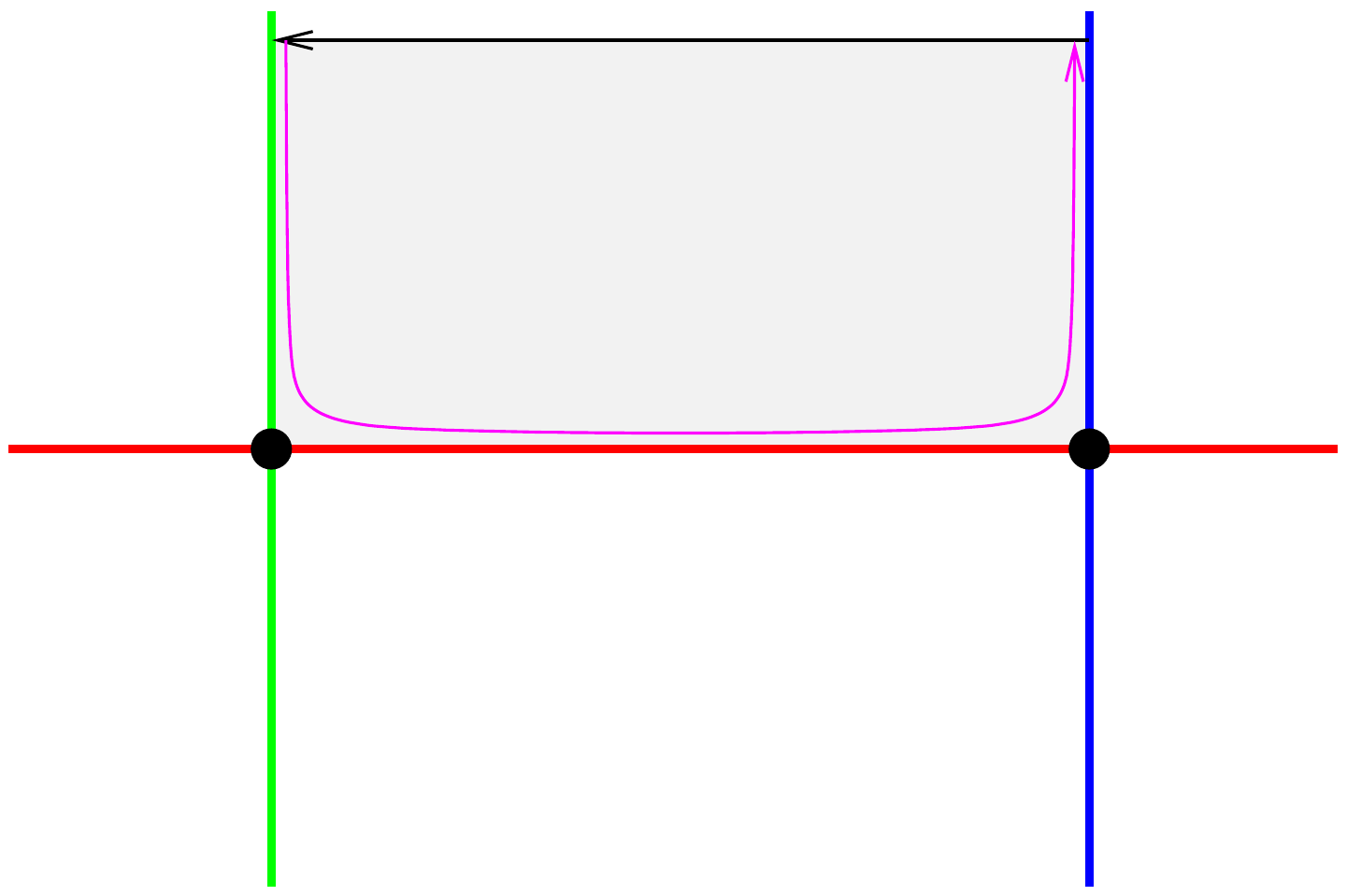}
\caption{A Reeb chord $a$ to $\Lambda_p$ from $\Lambda_K$ and a holomorphic disk in $\mathcal{M}^{\rm sw}(a)$ whose boundary is the depicted broken string $s$.}
\label{fig:cotangent-string}
\end{figure}

The boundary of a disk in $\mathcal{M}^{\rm sw}(a)$, oriented counterclockwise, is a broken string in $Q \cup N \cup L_p$; see Figure~\ref{fig:cotangent-string}. More precisely, each endpoint of a Reeb chord of $\Lambda_K \cup \Lambda_p$ is a point in $\Lambda_K \cup \Lambda_p$; fix paths in $L_p$ or $N$ that connect these points to the base points $(p,\xi)$ or $(x_0,\xi_0)$. Then the union of the boundary of a disk in $\mathcal{M}^{\rm sw}(a)$ and the paths for the endpoints of $a$ is a broken string.

We can stratify $\mathcal{M}^{\rm sw}(a)$ by the number of switches at $p$: for $\ell \geq 0$, let $\mathcal{M}^{\rm sw}_\ell(a)$ denote the subset of $\mathcal{M}^{\rm sw}(a)$ of disks with $\ell$ switches at $p$. The moduli space $\mathcal{M}^{\rm sw}_\ell(a)$ is an oriented $C^{1}$-manifold and we let $[\mathcal{M}^{\rm sw}_\ell(a)]$ denote the chain of broken strings in $\Sigma_\ell$ carried by this moduli space (that is, the chain given by the boundaries of disks in the moduli space).
Now define $\Phi\colon\A_{\Lambda_{K}\cup\Lambda_p}\to\tilde C_{\ast}$ by
\[
\Phi(a)=\sum_\ell [\mathcal{M}^{\rm sw}_\ell(a)].
\]

\begin{proposition}
The map
\[
\Phi :\thinspace (\A_{\Lambda_K \cup \Lambda_p},\partial) \to
(\tilde{C}_*,\partial+\delta_{Q}^K+\delta_{N}^K+\delta_{Q}^{p}+\delta_{L_p}^p)
\]
is a degree zero chain map of differential graded algebras, where multiplication on $\tilde{C}_*$
is given by chain-level concatenation of broken strings.
\end{proposition}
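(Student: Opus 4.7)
The plan is to mimic the argument of \cite{CELN}, which establishes the analogous statement when only the conormal $L_K$ is present, and to track the additional contributions coming from the fiber $L_p$. I would first extend $\Phi$ from the Reeb-chord generators given in the statement to all of $\A_{\Lambda_K\cup\Lambda_p}$: on a homology class $\alpha\in H_1(\Lambda_K)$ (respectively $H_1(\Lambda_p)$), set $\Phi(\alpha)$ to be the $0$-chain in $\Sigma_0^{KK}$ (respectively $\Sigma_0^{pp}$) represented by a smooth loop in $N$ (respectively $L_p$) based at the distinguished base point and representing $\alpha$; on a composable word, extend by concatenation at the base points, and send the idempotents $e_i$ to the unit chains. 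Multiplicativity is then automatic, so the DGA structure reduces to showing that $\Phi$ intertwines the differentials.

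For the degree-zero claim I would combine the general dimension formula of Remark~\ref{r:gendim}, applied to a disk with exactly one positive puncture, with the K\"ahler-angle analysis from Remark~\ref{r:rotations}. For a disk in $\mathcal{M}^{\rm sw}_\ell(a)$ with $\ell$ switches at $p$, the moduli-space dimension equals $|a|-\ell/2$ in the unshifted Morse grading; combined with the $1/2$-shift built into $\tilde C_{*}^{ij}$ at each endpoint that lies at $p$, this gives total degree $|a|$ after a direct check against Proposition~\ref{prop:degrees}. The switches on $K$ contribute no shift because the intersection $Q\cap N=K$ is clean, exactly as in \cite{CELN}.

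For the chain-map property I would run the standard SFT boundary analysis on the compactified one-dimensional component of $\mathcal{M}^{\rm sw}(a)/\R$. The switching disks with one positive puncture admit no nontrivial multiple covers for topological reasons, so transversality holds for generic almost complex structure; by SFT compactness, the boundary consists of three kinds of configurations. Type (a): two-level buildings with a single SFT breaking at a Reeb chord, which sum to $\Phi(\partial a)$. Type (b): limits where a boundary point on a $Q$- or $N$-arc runs into $K$ and a pair of $K$-switches forms, contributing $(\delta_Q^K+\delta_N^K)\Phi(a)$ by the gluing analysis of \cite{CELN}. Type (c): limits where a boundary point on a $Q$- or $L_p$-arc runs into $p$ and a pair of $p$-switches forms, contributing $(\delta_Q^p+\delta_{L_p}^p)\Phi(a)$. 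Summing with signs dictated by the capping-operator conventions of \cite{EESori} yields the desired identity $\Phi\circ\partial=(\partial+\delta_Q^K+\delta_N^K+\delta_Q^p+\delta_{L_p}^p)\circ\Phi$.

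The main obstacle is the bookkeeping for the new type (c) stratum. In the naive count the event ``a boundary arc of a switching disk passes through the point $p\in\R^3$'' is of real codimension two, matching the degree $-2$ shift of $\delta_Q^p$ and $\delta_{L_p}^p$ on the string side; the nontrivial point is to show that this stratum nonetheless appears as a codimension-one face of the compactified disk moduli, which is precisely what the K\"ahler-angle gluing foreshadowed in Remark~\ref{r:rotations} is designed to establish, and that its local orientation matches the sign of the inserted spike. Once the matching of orientations at $p$-corners is verified, types (a) and (b) follow by a direct transcription of the arguments of \cite{CELN}, and multiplicativity of $\Phi$ then holds on the nose by construction.
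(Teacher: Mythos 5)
Your proposal is correct and follows essentially the same route as the paper: the degree count via the dimension of $\mathcal{M}^{\rm sw}_\ell(a)$ combined with the K\"ahler-angle analysis of Remark~\ref{r:rotations} and the half-integer shifts built into $\tilde{C}_*$, and the chain-map identity via the three codimension-one boundary strata (two-level SFT breaking, boundary degeneration at $K$, boundary degeneration at $p$), with multiplicativity holding by construction. The only quibble is that your intermediate dimension formula $|a|-\ell/2$ should read $|a|-\ell'$, where $\ell'$ counts only the switches from $L_p$ to $Q$ (the two differ by $\pm\tfrac{1}{2}$ for mixed chords), but this washes out in the final degree check against the shifted grading exactly as you indicate.
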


\begin{proof}
The proof is very similar to \cite[Proposition~5.8]{CELN}. We first check that the map $\Phi$ has degree $0$.
The dimension of $\mathcal{M}^{\rm sw}_\ell(a)$ is
\[
\dim(\mathcal{M}^{\rm sw}_\ell(a)) = |a| - \ell',
\]
where $\ell'$ is the number switches at $p$ along the boundary where the boundary switches from $L_{p}$ to $Q$.
To see this, 
recall from Remark~\ref{r:rotations} that the contribution to the dimension formula is $0$ for punctures switching from $Q$ to $L_{p}$ at $p$ and $-1$ for the puncture switching from $L_{p}$ to $Q$.

We now have three cases. If $a$ joins $\Lambda_K$ to itself, then $\ell= 2\ell'$ and
\[
[\mathcal{M}^{\rm sw}_\ell(a)] \in C_{|a|-\ell/2}(\Sigma_\ell^{KK}) \subset \tilde{C}_{|a|}^{KK}.
\]
If $a$ goes to $\Lambda_K$ from $\Lambda_p$, then if we traverse the boundary of a disk in $\mathcal{M}^{\rm sw}_\ell(a)$ beginning at the positive puncture, we begin on $N$, then alternately switch to and from $L_p$, and end on $L_p$; thus $\ell = 2\ell'+1$ and
\[
[\mathcal{M}^{\rm sw}_\ell(a)] \in C_{|a|-(\ell-1)/2}(\Sigma_\ell^{Kp}) \subset \tilde{C}_{|a|}^{Kp}.
\]
Finally, if $a$ goes to $\Lambda_p$ from $\Lambda_K$, then the same argument gives $\ell = 2\ell'-1$ and
\[
[\mathcal{M}^{\rm sw}_\ell(a)] \in C_{|a|-(\ell+1)/2}(\Sigma_\ell^{pK}) \subset \tilde{C}_{|a|}^{pK}.
\]
In all cases we find that $\Phi$ preserves degree.

We next study the chain map equation. To this end we must understand the codimension $1$ boundary of $\mathcal{M}^{\rm sw}(a)$ which contributes the singular boundary $\partial\Phi(a)$. This boundary consists of three parts:
\begin{itemize}
\item[$(i)$]	
$2$-level disks with one level of dimension $\dim(\mathcal{M}^{\rm sw}(a))-1$ and a level of dimension 1 in the symplectization end;
\item[$(ii)$]
$1$-level disks in which a boundary arc in $Q$ or in $L_K$ shrinks to a point in $K$, or equivalently a disk with one boundary arc that hits $K$ in an interior point;
\item[$(iii)$]
$1$-level disks in which a boundary arc in $Q$ or in $L_p$ shrinks to a point at $p$, or equivalently a disk with one boundary arc that hits $p$ in an interior point.  	
\end{itemize}
Configurations of type $(i)$ are counted by $\Phi(\partial a)$, configurations of type $(ii)$ by $(\delta_{Q}^{K}+\delta_N^{K})\Phi(a)$, and configurations of type $(iii)$ by $(\delta_{Q}^{p}+\delta_N^{p})\Phi(a)$. The chain map equation follows.
\end{proof}

We will be especially interested in the subcomplexes $\tilde{C}_*^{KK}$,
$\tilde{C}_*^{Kp}$, $\tilde{C}_*^{pK}$ in the lowest degree. These
are given as follows, where the differential is $d = \partial +
\delta_Q^K + \delta_N^K$ (the operations $\delta_Q^p$,
$\delta_{L_p}^p$ do not appear for degree reasons):
\begin{align*}
\tilde{C}_1^{KK} = C_1^{KK}(\Sigma_0) \oplus
C_0^{KK}(\Sigma_2) &\stackrel{d}{\longrightarrow}
\tilde{C}_0^{KK} = C_0^{KK}(\Sigma_0) \\
\tilde{C}_1^{Kp} = C_1^{Kp}(\Sigma_1) \oplus
C_0^{Kp}(\Sigma_3) &\stackrel{d}{\longrightarrow}
\tilde{C}_0^{Kp} = C_0^{Kp}(\Sigma_1) \\
\tilde{C}_2^{pK} = C_1^{pK}(\Sigma_1) \oplus
C_0^{pK}(\Sigma_3) &\stackrel{d}{\longrightarrow}
\tilde{C}_1^{pK} = C_0^{pK}(\Sigma_1).
\end{align*}
Note that $d$ acts on the first summand in each case, and is $0$ on
the second.

A main result from \cite{CELN} is that $\Phi$ induces an isomorphism in
degree $0$ homology. In our setting, this becomes the following:

\begin{proposition}\label{p:stringtopiso}
The map $\Phi$ induces isomorphisms
\label{prop:Phi-isom}
\begin{align*}
R_{KK} = H_0(\A_{\Lambda_K,\Lambda_K})
  &\stackrel{\cong}{\longrightarrow}
H_0(\tilde{C}_*^{KK},d) = \coker \left(\partial+\delta_Q^K+\delta_N^K :\thinspace
C_1^{KK}(\Sigma_0) \to C_0^{KK}(\Sigma_0)\right) \\
R_{Kp} = H_0(\A_{\Lambda_K,\Lambda_p})
&\stackrel{\cong}{\longrightarrow}
H_0(\tilde{C}_*^{Kp},d) = \coker \left(\partial+\delta_Q^K+\delta_N^K :\thinspace
C_1^{Kp}(\Sigma_1) \to C_0^{Kp}(\Sigma_1)\right) \\
R_{pK} = H_1(\A_{\Lambda_p,\Lambda_K})
  &\stackrel{\cong}{\longrightarrow}
H_1(\tilde{C}_*^{pK},d) = \coker \left(\partial+\delta_Q^K+\delta_N^K :\thinspace
C_1^{pK}(\Sigma_1) \to C_0^{pK}(\Sigma_1)\right).
\end{align*}
\end{proposition}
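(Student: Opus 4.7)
My plan is to extend the string-topology isomorphism of \cite{CELN}, which handles the analogous statement for $\Lambda_K$ alone, to the enhanced setting by exploiting the filtration by number of mixed chords on the DGA side and the parallel stratification by number of switches at $p$ on the string side.

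First I would reduce both sides to their lowest-filtration pieces. On the DGA side, the paragraph preceding Definition~\ref{def:KCH-triple} already shows that $H_0(\A_{\Lambda_K,\Lambda_K}) \cong H_0(\A^{(0)}_{\Lambda_K,\Lambda_K})$, $H_0(\A_{\Lambda_K,\Lambda_p}) \cong H_0(\A^{(1)}_{\Lambda_K,\Lambda_p})$, and $H_1(\A_{\Lambda_p,\Lambda_K}) \cong H_1(\A^{(1)}_{\Lambda_p,\Lambda_K})$. On the string side, the operations $\delta_Q^p$ and $\delta_{L_p}^p$ raise the number of switches at $p$ by two while lowering simplex degree by two, so inspecting the displayed summands $\tilde C_0^{KK}$, $\tilde C_0^{Kp}$, $\tilde C_1^{pK}$ one sees that they automatically vanish on the relevant chain groups. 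Hence the right-hand targets are cokernels of the ordinary CELN differential $d = \partial + \delta_Q^K + \delta_N^K$ on a complex with a \emph{fixed} number of switches at $p$ (namely $0$ for $KK$ and $1$ for the mixed cases).

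For the $R_{KK}$ isomorphism, $\Phi$ restricted to $\A_{\Lambda_K}=\A^{(0)}_{\Lambda_K,\Lambda_K}$ counts disks in $\mathcal{M}^{\rm sw}_0(a)$ with boundary on $Q\cup N$ and no interaction with $L_p$, so it is literally the chain map of \cite{CELN}, and the statement follows from the main theorem there. For the mixed cases, I would observe that a broken string in $\Sigma_1^{Kp}$ decomposes canonically as a $Q\cup N$-broken string from $(x_0,v_0)$ to a point of $Q$ lying over $p$, followed by one switch at $p$, followed by a path in $L_p$ from the origin to $(p,v)$; an analogous decomposition holds for $\Sigma_1^{pK}$. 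Since $L_p \cong \R^3$ is contractible, the space of $L_p$-tails deformation retracts onto a point, reducing the string complex in the relevant degree to the CELN-type complex of $Q\cup N$-broken strings with one endpoint at $(x_0,v_0)$ and one endpoint free in $Q$. On the DGA side, $\A^{(1)}_{\Lambda_K,\Lambda_p}$ is generated as a left $\A_{\Lambda_K}$-module by words $\mathbf{c}\, a$ with $a\in \mathcal{R}^{Kp}$, and the restriction of $\partial$ to this bimodule consists of the CELN differential on $\mathbf{c}$ together with terms that push into $\F^3 = 0$ in our degree. The map $\Phi$ identifies both presentations, and the isomorphism follows by applying the CELN result to the $Q\cup N$-portion and checking that the switch-at-$p$ boundary condition matches with the choice of mixed chord on the DGA side.

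The main obstacle will be the rigorous matching of boundary conditions and orientations at the $L_p$-switch. I would handle this using SFT compactness to identify the codimension-one boundary of $\mathcal{M}^{\rm sw}_1(a)$ for a mixed chord $a$ with the usual CELN boundary contributions plus a contribution from disks in which a boundary arc shrinks to $p$ (the latter being the $\delta^p$ operations, which are absent in our degree range by the codimension count above). The resulting chain-level identification, combined with the CELN quasi-isomorphism, then gives the three asserted isomorphisms, with the degree-$1$ shift in the $R_{pK}$ case accounted for exactly by the half-integer grading convention established in Remark~\ref{r:rotations} and by Proposition~\ref{prop:degrees}.
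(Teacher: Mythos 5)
Your overall architecture (reduce to the lowest filtered pieces, note that $\delta_Q^p$ and $\delta_{L_p}^p$ vanish in the relevant degrees, quote \cite{CELN} for the $KK$ case) matches the paper, but the step where you handle the mixed cases contains a genuine error. You assert that on $\A^{(1)}_{\Lambda_K,\Lambda_p}$ the differential ``consists of the CELN differential on $\mathbf{c}$ together with terms that push into $\F^3=0$ in our degree.'' This is false: for a degree-$1$ mixed chord $a'\in\mathcal{R}^{Kp}$, the differential $\partial a'$ is a sum of composable words each containing \emph{exactly one} mixed chord, and these live in $\A^{(1)}_{\Lambda_K,\Lambda_p}$, not in $\F^3$. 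They contribute to the image of $d$ in degree $0$ and are precisely the relations that make $R_{Kp}$ the cord module $\Cord_{Kp}\cong R$ rather than a free $R_{KK}$-module on the degree-$0$ mixed chords; dropping them computes the wrong cokernel. The same problem appears on the string side: after contracting the $L_p$-tail you obtain the complex of $Q\cup N$-broken \emph{paths} from $x_0$ to $p$, whose $H_0$ is the cord module, not the cord algebra, so the main theorem of \cite{CELN} does not apply as a black box to ``the $Q\cup N$-portion.''

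What the paper actually does is not a formal reduction to the CELN \emph{result} but a rerun of the CELN \emph{argument}: one introduces the length filtration on chains of broken strings and the action filtration on the DGA, checks that $\Phi$ respects them, passes to piecewise-linear strings, and uses the length-decreasing flow together with the basic holomorphic strips over binormal chords (now including the mixed binormal chords from $K$ to $p$) to show $\Phi$ is a quasi-isomorphism on each of the three complexes separately. If you want to salvage your approach, you would need to redo that filtration argument for the path complexes with one endpoint at $p$, at which point the decomposition through the contractible $L_p$-tail becomes a convenient simplification rather than the engine of the proof.
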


\begin{proof}
The isomorphism for $R_{KK}$ is proven in \cite[\S 7]{CELN} via an action/length filtration argument, and the other isomorphisms use exactly the same argument.
A short description of the argument is as follows. A length filtration on chains of broken strings given by the supremum norm of the sum of the lengths of the $Q$-strings is introduced. On the DGA there is the action filtration and for a suitable choice of almost complex structure on $T^{\ast}\R^{3}$ the chain map $\Phi$ respects this filtration. A standard approximation argument shows that the string homology complex is quasi-isomorphic to the string homology complex of piecewise linear broken strings. On the complex of piecewise linear broken strings, a length-decreasing flow (with splittings when the segments cross the knot) then deforms the complex to a complex generated by certain chains associated to binormal chords and using basic holomorphic strips over binormal chords and the action/length filtrations then shows that $\Phi$ is a quasi-isomorphism.
\end{proof}

\begin{remark}
It is likely that $\Phi$ is in fact an isomorphism in all degrees. The reason that we restrict to the lowest degree ($0$ for $\A_{\Lambda_K,\Lambda_K}$ and $\A_{\Lambda_K,\Lambda_p}$ and $1$ for $\A_{\Lambda_p,\Lambda_K}$) here, following the same restriction in \cite{CELN}, is that the proof of the isomorphism in \cite{CELN}
involves an explicit examination of moduli spaces of holomorphic disks with switching boundary conditions of dimensions $\le 2$. To extend the isomorphism to higher degrees would require one to work out the relevant string homology in degree $d+2$, imposing conditions at endpoints of the strings that match degenerations in higher dimensional moduli spaces of holomorphic disks, and this has not been worked out for moduli spaces of dimensions $\geq 3$.
\end{remark}

\begin{remark}
In \cite{CELN}, $\coker \left(\partial+\delta_Q^K+\delta_N^K :\thinspace
C_1^{KK}(\Sigma_0) \to C_0^{KK}(\Sigma_0)\right)$ is written as
``string homology'' $H_0^\text{string}(K)$, and the first isomorphism in
Proposition~\ref{prop:Phi-isom} states
that $H_0^\text{string}(K)$ is isomorphic to knot contact homology in
degree $0$. A variant of this construction, modified string homology
$\tilde{H}_0^\text{string}(K)$, is also considered in \cite[\S
2]{CELN}, and it is observed there that $\tilde{H}_0^\text{string}(K) \cong
\Z[\pi_1(\R^3\setminus K)]$. In our language, modified string homology
is defined by
\[
\tilde{H}_0^\text{string}(K) = \coker \left(\partial+\delta_Q^K+\delta_N^K :\thinspace
C_1^{pp}(\Sigma_2) \to C_0^{pp}(\Sigma_2)\right)
\]
and the above map is part of the differential $d :\thinspace \tilde{C}_2^{pp} \to \tilde{C}_1^{pp}$.

Two things prevent us from using modified string homology to show that $\LCH_*(\Lambda_K \cup \Lambda_p)$ is a complete invariant. First, $\tilde{H}_0^\text{string}(K)$ is not directly a summand of the homology of $\tilde{C}_*$, although it does map to $\tilde{C}_1^{pp}$. Second, the isomorphism of 
$\tilde{H}_0^\text{string}(K)$ with $\Z[\pi_1(\R^3\setminus K)]$ is as a $\Z$-module, without the product structure. One could try to recover the product on $\Z[\pi_1(\R^3\setminus K)]$, which is crucial to recovering the knot group itself, via 
the natural concatenation product on $\tilde{C}_*^{pp}$, but this sends $\tilde{C}_1^{pp} \otimes \tilde{C}_1^{pp}$ to $\tilde{C}_2^{pp}$ rather than to $\tilde{C}_1^{pp}$.

Instead, we need a product on $\tilde{C}_*$ that (in our grading convention) reduces degree by $1$. Intuitively this is given by concatenating a broken string ending at $p$ and a broken string beginning at $p$, and deleting the two switches at $p$. More precisely, we use the Pontryagin product; we discuss this product and its holomorphic-curve counterpart next.

\end{remark}

\subsection{String topology and the product}
\label{ssec:string-product}

Having established isomorphisms $\Phi$ in low degree between enhanced knot
contact homology and string homology, we now examine the behavior of
the product map $\mu$ under this isomorphism. Recall from
Section~\ref{ssec:product} that $\mu$ is a map
\[
\mu :\thinspace
\A_{\Lambda_K,\Lambda_p}^{(1)} \otimes \A_{\Lambda_p,\Lambda_K}^{(1)} \to
\A_{\Lambda_K,\Lambda_K}^{(0)}.
\]
We will show that under the isomorphism $\Phi$, $\mu$ maps to the Pontryagin product at the base point $(p,v) \in L_p$, which we now define.

Consider two chains of broken strings in $C^{Kp}_{k_1}(\Sigma_{\ell_1})$ and $C^{pK}_{k_2}(\Sigma_{\ell_2})$. We define their Pontryagin product at $(p,v)$ as the concatenation at $(p,v)$ followed by
removing the path between the switches at $p$ that precede and
follow this concatenation.
This gives a map
\[
C_{k_1}^{Kp}(\Sigma_{\ell_1}) \otimes
C_{k_2}^{pK}(\Sigma_{\ell_2}) \to
C_{k_1+k_2}^{KK}(\Sigma_{\ell_1+\ell_2-2}).
\]
Summing over integers $k_1,k_2$ and half-integers $\ell_1,\ell_2$, we
get the Pontryagin product at $p$
\[
P :\thinspace \tilde{C}_*^{Kp} \otimes \tilde{C}_*^{pK} \to
\tilde{C}_*^{KK}
\]
which has degree $-1$.

We now treat the relation between $P$ and $\mu$. Note that $\Phi :\thinspace \A_{\Lambda_K \cup \Lambda_p}
\to \tilde{C}_*$ induces maps
\begin{align*}
\A_{\Lambda_K,\Lambda_p}^{(1)} &\to \tilde{C}_*^{Kp}, &
\A_{\Lambda_p,\Lambda_K}^{(1)} &\to \tilde{C}_*^{pK}.
\end{align*}
We claim that these $\Phi$ maps intertwine $P$ and $\mu$ on the level of homology. This is not true on the chain level, but the
difference can be measured by a map $\Psi$ that we now define.

If $a_1$ and $a_2$ are Reeb chords to $\Lambda_{K}$ from $\Lambda_{p}$
and to $\Lambda_{p}$ from $\Lambda_{K}$, respectively, then we write
$\M^{\rm sw}(a_1,a_2)$ for the moduli space of holomorphic disks
$u\colon D\to T^*Q$ that send
positive punctures at $-1$ and $1$ to $a_1$ and $a_2$ at
infinity, the arc in the upper half plane connecting these punctures
to $L_p$, and the arc in the lower half plane to $Q \cup L_K \cup L_p$.
We can stratify $\M^{\rm sw}(a_1,a_2)$ by the number of switches that
the boundary of a holomorphic disk has at $p$; for $\ell\geq 0$ even, write
$\M^{\rm sw}_\ell(a_1,a_2)$ for the subset of $\M^{\rm sw}(a_1,a_2)$
corresponding to disks with $\ell$ switches at $p$. The formal dimension
of this moduli space is, see Remark \ref{r:gendim},
\[
\dim(\M^{\rm sw}_\ell(a_1,a_2))=|a_1|+|a_2|-\ell/2.
\]

With notation as in Section~\ref{ssec:product}, define a map
\[
\Psi\colon\thinspace
\A_{\Lambda_K,\Lambda_p}^{(1)}
\otimes
\A_{\Lambda_p,\Lambda_K}^{(1)}
\to
C^{KK}_*
\]
as follows:
\[
\Psi(\mathbf{b}_1a_1\otimes a_2\mathbf{b}_2)=
\sum_{|a_1|+|a_2|-\ell/2=0} \Phi(\mathbf{b}_{1})\cdot[\M^{\rm sw}_\ell(a_1,a_2)]\cdot\Phi(\mathbf{b}_2),
\]
where $[\M^{\rm sw}_\ell(a_1,a_2)]$ denotes the chain in
$C^{KK}_0(\Sigma_\ell) \subset C_{\ell/2}^{KK} = \tilde{C}_{\ell/2}^{KK}$ carried by the moduli
space (i.e., the chain of broken strings corresponding to the disks in
$\M^{\rm sw}_\ell(a_1,a_2)$) and where $\cdot$ denotes the concatenation product: given broken strings in $\Phi(\mathbf{b}_1)$,
$\M^{\rm sw}_\ell(a_1,a_2)$, and $\Phi(\mathbf{b}_2)$, we concatenate the three to obtain another broken string.

\begin{remark}
To see that $[\M^{\rm sw}_\ell(a_1,a_2)]$ is a chain of broken strings we use \cite[Theorem 1.2]{CEL} which implies that there is a uniform bound on the number of switches on the boundary of a disk in any moduli space with two positive punctures.
\end{remark}

\begin{proposition}

On $\A_{\Lambda_K,\Lambda_p}^{(1)}\otimes\A_{\Lambda_p,\Lambda_K}^{(1)}$
\label{prop:Psi-mu}
we have the following:
\[
\Phi\circ \mu - P\circ(\Phi\otimes\Phi)
+ \Psi\circ(1\otimes\partial+\partial\otimes 1)
-(\partial+\delta_{Q}^{K}+\delta_{N}^{K}+\delta_Q^p+\delta_{L_p}^p)\circ\Psi=0.
\]
\end{proposition}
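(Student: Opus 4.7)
The identity is a chain-homotopy relation with $\Psi$ as the homotopy, and I would prove it by the standard strategy of reading it off from the zero algebraic count of the oriented codimension-one boundary of the one-dimensional components of the two-positive-puncture switching moduli space $\M^{\rm sw}(a_1,a_2)$. Fix inputs $\mathbf{b}_1 a_1\in\A_{\Lambda_K,\Lambda_p}^{(1)}$ and $a_2\mathbf{b}_2\in\A_{\Lambda_p,\Lambda_K}^{(1)}$. Since $\Psi(\mathbf{b}_1 a_1\otimes a_2\mathbf{b}_2)=\Phi(\mathbf{b}_1)\cdot[\M^{\rm sw}(a_1,a_2)]\cdot\Phi(\mathbf{b}_2)$ is a concatenation and $\Phi$ is a chain map of DGAs by the previous proposition, the Leibniz rule for the total differential $d=\partial+\delta_Q^K+\delta_N^K+\delta_Q^p+\delta_{L_p}^p$ peels off outer $\Phi(\partial\mathbf{b}_i)$ terms that match the $\mathbf{b}_i$-pieces of $\Psi\circ(\partial\otimes 1+1\otimes\partial)$, reducing the problem to an identification of the middle factor $d[\M^{\rm sw}(a_1,a_2)]$. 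Transversality of the one-dimensional strata follows as in Proposition~\ref{prop:mu} from the somewhere-injectivity forced by having one mixed puncture of each type, and compactness after adding boundary uses SFT compactness together with the uniform switch bound of \cite[Theorem~1.2]{CEL}.

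I would then classify the codimension-one degenerations of $\M^{\rm sw}(a_1,a_2)$ into four families and match each to a term in the identity. (a)~Two-level SFT breakings in which the upper symplectization level has a single positive puncture at $a_1$ or $a_2$, with its negative chords becoming, together with the remaining $a_j$, the positive punctures of a lower two-positive-puncture switching disk; combined with the outer $\Phi(\partial\mathbf{b}_i)$ contributions above, these assemble into $\Psi\circ(\partial\otimes 1+1\otimes\partial)$. (b)~Two-level SFT breakings in which the upper level carries both positive punctures $a_1,a_2$ with $\Lambda_K$-only negatives, i.e.\ a disk counted by $\mu(a_1\otimes a_2)$, with switching disks filling in the negatives below; these contribute $\Phi\circ\mu$, since $\Phi$ of the $\mu$-output word is realized by the bottom fillings. (c)~A conformal degeneration in $T^*Q$ in which the strip between the two positive punctures, with boundary on $L_p$ above and on $Q$ below, shrinks to the unique intersection point $p=Q\cap L_p$, splitting the disk into two switching disks with single positive punctures at $a_1$ and $a_2$, each carrying a new switch at $p$ at the node; the two pieces are exactly those counted by $\Phi(a_1)$ and $\Phi(a_2)$ meeting at $(p,v)$, and deletion of the two new switches at $p$ is the definition of the Pontryagin product, producing $P\circ(\Phi\otimes\Phi)$. (d)~Interior intersections of a boundary arc with $K$ or $p$, handled as in \cite[Proposition~5.8]{CELN} by the spike operations $\delta_Q^K+\delta_N^K+\delta_Q^p+\delta_{L_p}^p$ applied to $[\M^{\rm sw}(a_1,a_2)]$ and assembled on the left of the identity as part of $(\partial+\delta_Q^K+\delta_N^K+\delta_Q^p+\delta_{L_p}^p)\circ\Psi$. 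Signed summation of the four families to zero gives the identity, with signs determined by the capping ordering $C^+_{a_1}\oplus L\oplus C^-_\bullet\oplus C^+_{a_2}\approx F$ of Section~\ref{ssec:product} producing the signs $+,-,+,-$ in the statement.

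The main obstacle is the rigorous justification of (c) together with the exclusion of competing interior degenerations. Boundary breaking along the upper $L_p$-arc at a finite Reeb chord is impossible because $\Lambda_p$ has no self Reeb chords, and boundary breaking along the lower mixed arc separating the two positive punctures is excluded by the topological argument of \cite{E08}, since a splitting arc between the two positive punctures would join boundary points mapping to distinct Legendrian components. The only remaining internal codimension-one degeneration in the filling is then precisely the strip-shrinking at $p$: it is codimension one because the length of the degenerating neck modulo the $\R$-action fixing the two boundary punctures is a one-parameter family, and the limit imposes the single geometric constraint that the collapsing strip maps to $p\in L_p\cap Q$. A careful sign analysis using the capping and gluing sequences of Section~\ref{ssec:product} then pins down the overall signs in the identity.
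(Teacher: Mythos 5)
Your proof is correct and follows essentially the same route as the paper: both identify the terms of the identity with the codimension-one boundary strata of the one-dimensional moduli spaces $\M^{\rm sw}(a_1,a_2)$, namely two-level SFT breakings (giving $\Phi\circ\mu$, $\Psi\circ(1\otimes\partial+\partial\otimes 1)$, and the $\partial$ part of the last term), Lagrangian intersection breaking at $K$, breaking at $p$ on the upper $L_p$-arc (giving $P\circ(\Phi\otimes\Phi)$), and breaking at $p$ on the lower arc. Your write-up simply supplies more detail (transversality, exclusion of boundary breaking, signs) than the paper's terse four-bullet argument.
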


\begin{proof}
To see this we note that the codimension one boundary $\partial\M^{\rm sw}(a_1,a_2)$ consists of the following breakings:
\begin{itemize}
\item Two-level disks with one level in the symplectization of dimension one and one level in the cotangent bundle. These are accounted for by the first and third terms and $\partial$ in the last term.
\item Lagrangian intersection breaking at $K$, accounted for by the operations $\delta_Q^{K}+\delta_{N}^{K}$ in the last term.
\item Lagrangian intersection breaking at $p$ in the upper half disk,
  accounted for by the second term.
\item
Lagrangian intersection breaking at $p$ in the lower half disk,
accounted for by the operations $\delta_Q^p+\delta_{L_p}^p$ in the
last term.
\end{itemize}
The formula follows.
\end{proof}

We can now assemble our results in low degree into the following result.
Recall that
$R_{KK} \cong H_0(\A_{\Lambda_K,\Lambda_K}^{(0)})$, $R_{Kp} \cong H_0(\A_{\Lambda_K,\Lambda_p}^{(1)})$, and
$R_{pK} \cong H_1(\A_{\Lambda_p,\Lambda_K}^{(1)})$.
From Proposition~\ref{prop:Phi-isom}, we have an isomorphism
$\Phi :\thinspace R_{KK} \to H_0(\tilde{C}_*^{KK},d)$. The following
is now an immediate consequence of Proposition~\ref{prop:Psi-mu}.

\begin{proposition}
The following diagram commutes: \label{prop:P-mu}
\[
\xymatrix{
R_{Kp} \otimes R_{pK} \ar[r]^{\mu} \ar[d]^{\Phi\otimes\Phi}
& R_{KK} \ar[d]^{\Phi} \\
H_0(\tilde{C}^{Kp}_*,d) \otimes
H_1(\tilde{C}^{pK}_*,d) \ar[r]^-{P} &
H_0(\tilde{C}^{KK}_*,d).
}
\]
\end{proposition}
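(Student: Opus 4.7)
The plan is to deduce this proposition directly from the chain-level identity established in Proposition~\ref{prop:Psi-mu}, interpreting $\Psi$ as a chain homotopy that makes $\Phi\circ\mu$ and $P\circ(\Phi\otimes\Phi)$ agree modulo boundary terms and terms that vanish on cycles. Since all of the holomorphic-curve analysis has already been packaged into that identity, what remains is purely algebraic book-keeping.

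Concretely, I would choose cycle representatives $x\in \A_{\Lambda_K,\Lambda_p}^{(1)}$ in degree $0$ and $y\in \A_{\Lambda_p,\Lambda_K}^{(1)}$ in degree $1$ for the given classes in $R_{Kp}$ and $R_{pK}$. Applying Proposition~\ref{prop:Psi-mu} to $x\otimes y$ kills the third term, since $\partial x=0$ and $\partial y=0$, leaving
\[
\Phi\bigl(\mu(x\otimes y)\bigr)\;-\;P\bigl(\Phi(x)\otimes\Phi(y)\bigr)\;=\;d\bigl(\Psi(x\otimes y)\bigr).
\]
The right-hand side is a $d$-boundary in $\tilde C_*^{KK}$. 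Since $\mu$ descends to homology by Proposition~\ref{prop:mu} and $\Phi$ is a chain map (Section~\ref{ssec:string-kch}), the first summand on the left is a $d$-cycle; the identity then forces $P(\Phi(x)\otimes\Phi(y))$ to be a $d$-cycle representing the same class in $H_0(\tilde C_*^{KK},d)$, which is exactly the commutativity of the square. To complete the argument one checks that this common class does not depend on the chosen cycle representatives: replacing $x$ by $x+\partial u$ or $y$ by $y+\partial v$ and applying the same identity (together with the Leibniz rule for $\mu$ from Proposition~\ref{prop:mu}) shows that both sides change by a $d$-boundary.

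The only pre-flight check worth doing is that the degrees line up: $\mu$ and $P$ both have degree $-1$, and the condition $|a_1|+|a_2|-\ell/2=0$ appearing in the definition of $\Psi$ shows that $\Psi$ has degree $0$. With these conventions $x\otimes y$ sits in total degree $1$, $\Psi(x\otimes y)$ lies in $\tilde C_1^{KK}$, and all four terms of the identity land in $\tilde C_0^{KK}$, so the passage to homology is consistent. I do not anticipate any real obstacle here; the entire content of the proposition is already encoded in the chain homotopy $\Psi$ produced by Proposition~\ref{prop:Psi-mu}, and the proof is a one-step descent to homology.
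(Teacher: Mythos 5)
Your argument is exactly the one the paper intends: the text simply declares Proposition~\ref{prop:P-mu} an ``immediate consequence'' of Proposition~\ref{prop:Psi-mu}, and your write-up is the standard descent-to-homology of that chain-level identity (the $\Psi\circ(1\otimes\partial+\partial\otimes 1)$ term dies on cycle representatives, the $d\circ\Psi$ term is a boundary, and independence of representatives follows from the same identity plus the Leibniz rule). The degree bookkeeping you check is also consistent with the paper's conventions, so this is correct and takes essentially the same route.
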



\section{Legendrian Contact Homology and the Knot Group}
\label{sec:homotopy}

In this section, we use the isomorphism from
Section~\ref{sec:stringtop} between Legendrian contact homology and
string topology to write the KCH-triple $(R_{KK},R_{Kp},R_{pK})$
defined in Section~\ref{ssec:enhanced} in terms of the knot
group $\pi_1(\R^3\setminus K)$. This will allow us to recover the knot
group from the KCH-triple along with the product $\mu :\thinspace
R_{Kp} \otimes R_{pK} \to R_{KK}$. Along the way, we present the KCH-triple in terms of the cord algebra and deduce that enhanced knot contact homology encodes the Alexander module.

\subsection{String homology and the cord algebra}
\label{ssec:cordalg}

From Proposition~\ref{prop:Phi-isom}, we have isomorphisms between the
KCH-triple $(R_{KK},R_{Kp},R_{pK})$ and parts of the homology of the
string complex $(\tilde{C}_*,d)$. As in \cite{CELN}, we can interpret
this string homology in terms of the ``cord algebra'' of $K$,
essentially by considering only the $Q$-strings. Here we give this
cord algebra interpretation of string homology, which will allow us in
Section~\ref{ssec:brackets} to rewrite the KCH-triple in terms of the
knot group. The cord-algebra approach has the added benefit of readily
yielding the Alexander module of the knot as the homology of a certain
linearization of enhanced knot contact homology, as we will see.

We first review the cord algebra as presented in \cite[\S 2.2]{CELN}, adapted
to our purposes.
Let $K \subset Q$ be an oriented knot and $p \in Q$ be a point in the knot
complement, where $Q=\R^3$ as before. Let $K'$ be a
parallel copy of $K$ in the Seifert framing, and choose a base point
$\ast$ on $K'$.

\begin{definition}
A \textit{cord} is a continuous map $\gamma :\thinspace [0,1] \to
Q$ with $\gamma(0),\gamma(1) \in (K' \setminus \{*\}) \cup \{p\}$ and
$\gamma([0,1]) \cap
K = \emptyset$. A cord is a $KK$ (respectively $Kp$; $pK$; $pp$) cord
if $\gamma(0),\gamma(1) \in K'$ (respectively $\gamma(0)\in K'$,
$\gamma(1) = p$; $\gamma(0) = p$, $\gamma(1) \in K'$;
$\gamma(0)=\gamma(1)=p$).
\end{definition}

\begin{definition}[\cite{CELN}]
The \textit{cord algebra} of $K$, $\Cord_{KK}$,
\label{def:cordalg}
is the noncommutative
unital ring freely generated by homotopy classes of $KK$ cords and
$\Z[l^{\pm 1},m^{\pm 1}]$, modulo the following skein
relations, where the cord is drawn in red, $K$ in black, and $K'$ in gray:
\begin{enumerate}
\item \label{it:cord1}
$\raisebox{-3ex}{\includegraphics[height=7ex]{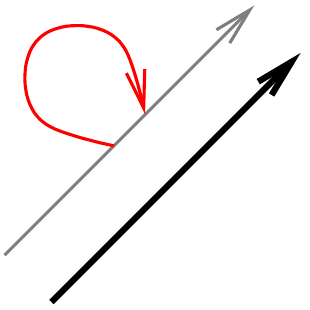}} = 1-m$
\item \label{it:cord0}
$\raisebox{-3ex}{\includegraphics[height=7ex]{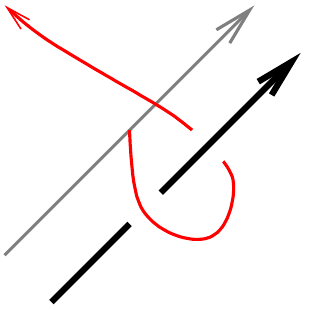}} = m
\cdot \raisebox{-3ex}{\includegraphics[height=7ex]{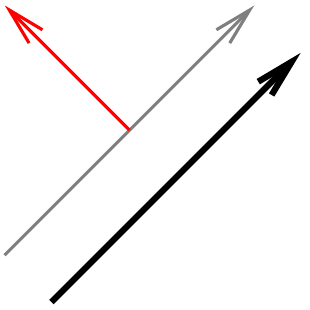}}$
\hspace{3ex} and
\hspace{3ex}
$\raisebox{-3ex}{\includegraphics[height=7ex]{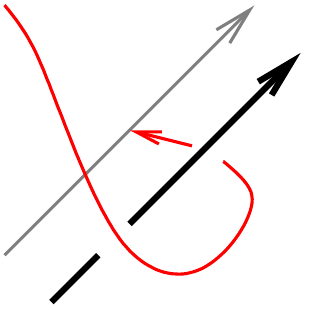}}
=\raisebox{-3ex}{\includegraphics[height=7ex]{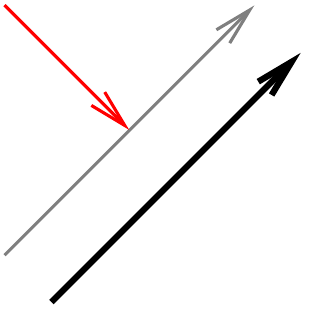}} \cdot m$
\item \label{it:cord2}
$\raisebox{-3ex}{\includegraphics[height=7ex]{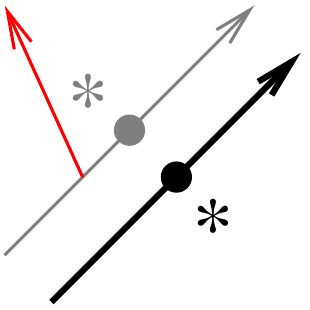}} = l
\cdot \raisebox{-3ex}{\includegraphics[height=7ex]{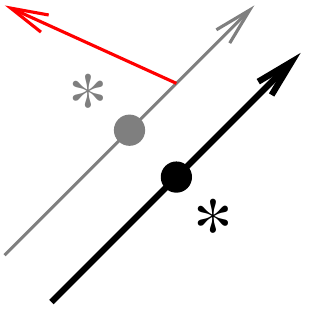}}$
\hspace{3ex} and
\hspace{3ex}
$\raisebox{-3ex}{\includegraphics[height=7ex]{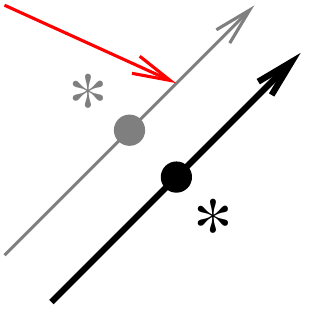}} =
\raisebox{-3ex}{\includegraphics[height=7ex]{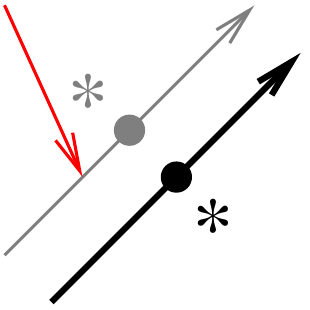}} \cdot
l$
\item \label{it:cord3}
$\raisebox{-3ex}{\includegraphics[height=7ex]{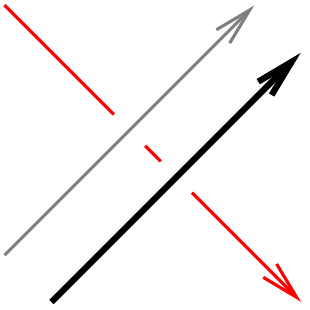}}
- \raisebox{-3ex}{\includegraphics[height=7ex]{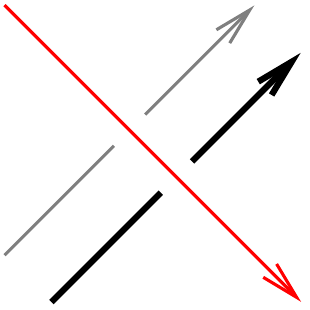}}
= \raisebox{-3ex}{\includegraphics[height=7ex]{skein3c}} \cdot
\raisebox{-3ex}{\includegraphics[height=7ex]{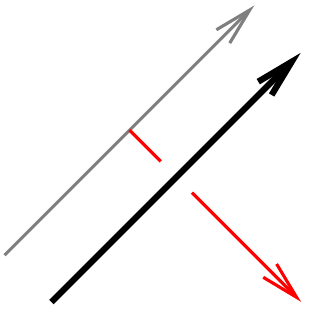}}$.
\end{enumerate}
Note that a typical element of $\Cord_{KK}$ is a linear combination of products of cords and elements of $\Z[l^{\pm 1},m^{\pm 1}]$, and multiplication in $\Cord_{KK}$ is given by formal concatenation of products.
\end{definition}

We can extend Definition~\ref{def:cordalg} to cover cords with endpoints at $p$ as well, where the relations only apply near $K$ and do not affect the ends at $p$. Note that the skein relations may still involve $KK$ cords:
for example, if the beginning and end points of the cords on the left hand side of \eqref{it:cord3} lie at $K$ and $p$ respectively, then 
\eqref{it:cord3} gives a relation between two
$Kp$ cords (the left hand side) and a product of a $KK$ cord and a
$Kp$ cord (the right hand side).

\begin{definition}
The $Kp$ \textit{cord module} of $K$, $\Cord_{Kp}$, is the left
$\Cord_{KK}$-module freely generated by $Kp$ cords, modulo the skein
relations \eqref{it:cord0}, \eqref{it:cord2}, and \eqref{it:cord3}
from Definition~\ref{def:cordalg}. Similarly, the $pK$ \textit{cord
  module} of $K$, $\Cord_{pK}$, is the right $\Cord_{KK}$-module freely
generated by $pK$, modulo the same skein relations.
\end{definition}

Now a broken string in $C_0^{KK}(\Sigma_0)$ (respectively
$C_0^{Kp}(\Sigma_1)$, $C_0^{pK}(\Sigma_1)$) produces an element of
$\Cord_{KK}$ (respectively $\Cord_{Kp}$, $\Cord_{pK}$) given by the
product of the $Q$-strings taken in order. This map induces maps from string homology to
the cord algebra and modules, and as in \cite{CELN}
we can show that these maps are isomorphisms. Combined with Proposition~\ref{prop:Phi-isom}, this shows that the cord algebra and modules are isomorphic to the KCH-triple $(R_{KK},R_{Kp},R_{pK})$, and this is the fact that we will exploit in this section to prove Theorem~\ref{thm:main}.

\begin{proposition}
There are isomorphisms
\begin{align*}
\Cord_{KK} &\cong H_0(\tilde{C}_*^{KK},d) \cong R_{KK} \\
\Cord_{Kp} &\cong H_0(\tilde{C}_*^{Kp},d) \cong R_{Kp} \\
\Cord_{pK} &\cong H_1(\tilde{C}_*^{pK},d) \cong R_{pK}
\end{align*}
where the first line is a ring isomorphism,
\label{prop:cord-triple}
and the second and third
lines send the left and right actions of $\Cord_{KK}$ to the left and
right actions of $R_{KK}$. Under these isomorphisms, the map $\mu :\thinspace R_{Kp} \otimes R_{pK} \to R_{KK}$ is the concatenation map
\[
\Cord_{Kp} \otimes \Cord_{pK} \to \Cord_{KK}.
\]
\end{proposition}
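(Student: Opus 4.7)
The plan is to build on Proposition~\ref{prop:Phi-isom} (which already supplies the second isomorphism in each line) by adapting the $KK$ argument of \cite{CELN} to the $Kp$ and $pK$ modules, and then to invoke Proposition~\ref{prop:P-mu} for the statement about $\mu$. First I would define the cord map explicitly: given a generic broken string $s$ in $C_0^{KK}(\Sigma_0)$, $C_0^{Kp}(\Sigma_1)$, or $C_0^{pK}(\Sigma_1)$, its $Q$-pieces form an ordered sequence $q_1,\dots,q_k$ of arcs in $Q\setminus K$ with endpoints either on $K$ (transverse, by the switching condition) or at $p$. After a canonical small pushoff of the $K$-endpoints to $K'$, each $q_i$ is a cord of the appropriate type, and $s$ is sent to the ordered product $q_1\cdots q_k$ in the appropriate cord module. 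The $N$-strings and $L_p$-strings are discarded: since $N$ is a disk bundle over $K$ and $L_p$ is contractible, they merely record which cord endpoints are adjacent (together with a possible monodromy class recorded by $l,m$).

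The next step is to verify that these maps descend to homology. The singular boundary $\partial$ produces continuous families of cords and so enforces only the homotopy relation. The operator $\delta_N^K$, which inserts a $Q$-spike at an interior point of an $N$-string on $K$, corresponds to the skein relation \eqref{it:cord3}: the two resolutions of an $N$-string meeting $K$ yield the difference on the left, while the inserted $Q$-spike gives the product on the right. The operator $\delta_Q^K$, together with the monodromy of $N$ around $K$ encoded by $l,m$, accounts for relations \eqref{it:cord1}, \eqref{it:cord0}, and \eqref{it:cord2}, which describe what happens when a cord contracts onto $K'$ or wraps once around $K$. An inverse is constructed by lifting a cord to a broken string: attach $N$-paths back to $(x_0,v_0)$ at each endpoint on $K'$, and trivial $L_p$-paths at each endpoint at $p$. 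Surjectivity is immediate; injectivity follows from the length-filtration argument of \cite[\S 7]{CELN}, which goes through essentially verbatim for the module cases because the new endpoint type at $p$ contributes no additional relations --- the operators $\delta_Q^p$ and $\delta_{L_p}^p$ do not enter in the low-degree homology at hand, and $L_p$-strings can always be contracted through $(p,v)$. The $\Cord_{KK}$-actions match the $R_{KK}$-actions because the cord map is defined by concatenating $Q$-strings in order, which is compatible with concatenation of broken strings.

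For compatibility with $\mu$, Proposition~\ref{prop:P-mu} identifies $\mu$ with the Pontryagin product $P$ on broken strings under $\Phi$. Since $P$ concatenates a $Kp$-string and a $pK$-string at $(p,v)$ and deletes the two switches at $p$ bordering the concatenation, and since the cord map already discards all $L_p$-data, $P$ descends on cords to the ordinary concatenation $\Cord_{Kp}\otimes\Cord_{pK}\to\Cord_{KK}$, as required. The main obstacle I anticipate is injectivity of the cord map on the module components: the length-decreasing flow of \cite{CELN} must remain well-defined on broken strings with one endpoint at $p$, and one must check that no degeneration at $p$ produces string-homology relations beyond those already encoded in the $Kp$ and $pK$ skein relations. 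The contractibility of $L_p$ and the absence of self Reeb chords of $\Lambda_p$ make this plausible, but it is the point where the $KK$ argument of \cite{CELN} needs the most careful bookkeeping to extend to the module setting.
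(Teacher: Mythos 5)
Your proposal is correct and takes essentially the same route as the paper, whose entire proof is to cite \cite{CELN} (Proposition 2.9) for the first line, observe that the second and third lines have the same proof, note that compatibility with multiplication is formal from the construction of the cord modules, and derive the statement about $\mu$ from Proposition~\ref{prop:P-mu} --- your write-up is simply a fuller unpacking of that argument, including the correct identification of the one point needing care (extending the length-filtration/flow argument to strings with an endpoint at $p$). One small slip in your bookkeeping: it is $\delta_Q^K$ (inserting an $N$-spike where a \emph{$Q$-string} crosses $K$) that produces the product skein relation \eqref{it:cord3}, while $\delta_N^K$ produces the relations involving $1-m$, so you have the roles of the two operations interchanged.
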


\begin{proof}
The first line is proved in Proposition 2.9 of \cite{CELN}, and the
other two lines have the same proof. The fact that
these isomorphisms preserve multiplication follows formally from the
construction of the cord algebra and modules. The description of $\mu$ as a concatenation product is a direct consequence of Proposition~\ref{prop:P-mu}.
\end{proof}

\subsection{Enhanced knot contact homology and the Alexander module}
\label{ssec:Alexander}

Here we digress from the main argument to observe that we can use the cord modules to recover the Alexander module $H_1(\tilde{X}_K)$ of
$K$, where $\tilde{X}_K$ is the infinite cyclic cover of $\R^3\setminus K$ and $H_1(\tilde{X}_K)$ is viewed as a $\Z[m^{\pm 1}]$-module as usual by deck transformations. As a consequence, we show that a certain canonical linearization of enhanced knot contact homology contains the Alexander module and thus the Alexander polynomial.

It was previously known \cite{Ngframed} that the Alexander module can be extracted from the same linearization of usual knot contact homology $\LCH_*(\Lambda_K)$, but in a somewhat obscure way---essentially, the degree $1$ linearized homology is the second tensor product of $H_1(\tilde{X}_K) \oplus \Z[m^{\pm 1}]$, with the proof involving an examination of the combinatorial form of the DGA of $\Lambda_K$ in terms of a braid representative for $K$, and a relation to the Burau representation. Here we will see that with the introduction of the fiber $\Lambda_p$ alongside $\Lambda_K$, we can instead deduce the Alexander module in a significantly simpler way. In particular, we will use linearized homology not in degree $1$ but in degree $0$, which is more geometrically natural (for instance, it relates more easily to the cord algebra).

We first present a variant of the cord algebra and modules, following \cite{Ngframed} and especially the discussion in \cite[\S 2.2]{CELN}. Choose a base point $\ast$ on $K$ corresponding to the base point $\ast$ on $K'$.
Let an \textit{unframed cord} of $K$ be a path whose endpoints are in $(K \setminus \{\ast\}) \cup \{p\}$ and which is disjoint from $K$ in its interior; we can divide these into $KK$, $Kp$, $pK$, $pp$ cords depending on where the endpoints lie.

\begin{definition}[\cite{Ngframed}]
The \textit{unframed cord algebra} of $K$, $\Cord'_{KK}$,
\label{def:unframedcordalg}
is the noncommutative algebra over $\Z[l^{\pm 1},m^{\pm 1}]$ generated by homotopy classes of unframed $KK$ cords, modulo the following skein relations:
\begin{enumerate}
\item \label{it:unframedcord1}
$\raisebox{-3ex}{\includegraphics[height=7ex]{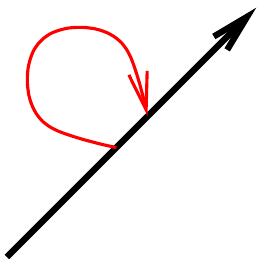}} = 1-m$
\item \label{it:unframedcord2}
$\raisebox{-3ex}{\includegraphics[height=7ex]{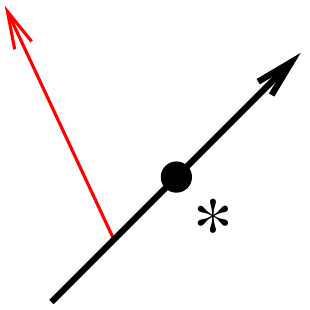}} = l
\cdot \raisebox{-3ex}{\includegraphics[height=7ex]{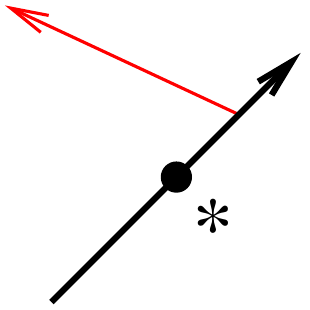}}$
\hspace{3ex} and
\hspace{3ex}
$\raisebox{-3ex}{\includegraphics[height=7ex]{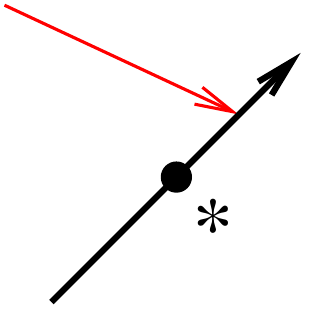}} =
\raisebox{-3ex}{\includegraphics[height=7ex]{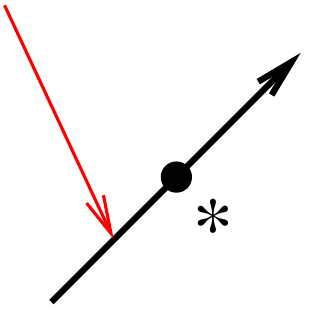}} \cdot
l$
\item \label{it:unframedcord3}
$\raisebox{-3ex}{\includegraphics[height=7ex]{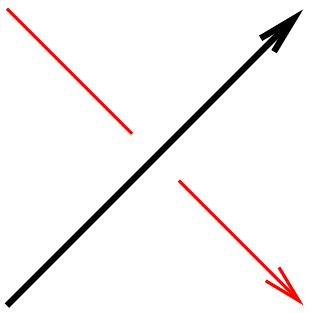}}
- m \raisebox{-3ex}{\includegraphics[height=7ex]{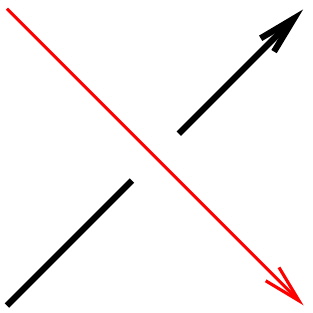}}
= \raisebox{-3ex}{\includegraphics[height=7ex]{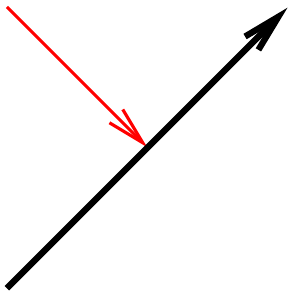}} \cdot
\raisebox{-3ex}{\includegraphics[height=7ex]{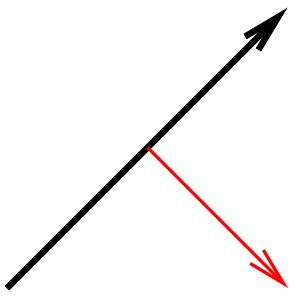}}$.
\end{enumerate}
The \textit{unframed Kp (respectively pK) cord module} of $K$, $\Cord'_{Kp}$ (respectively $\Cord'_{pK}$), is the right (respectively left) $\Cord'_{KK}$-module generated by unframed $Kp$ (respectively $pK$) cords, modulo the skein relations \eqref{it:unframedcord2} and \eqref{it:unframedcord3}.
\end{definition}

Note that $\Cord'_{KK}$, $\Cord'_{Kp}$, and $\Cord'_{pK}$ are all $\Z[l^{\pm 1},m^{\pm 1}]$-modules, unlike their framed counterparts $\Cord_{KK}$, $\Cord_{Kp}$, $\Cord_{pK}$, where elements of $\Z[l^{\pm 1},m^{\pm 1}]$ do not necessarily commute with cords. However, we have the following.

\begin{proposition}
The unframed cord algebra and modules $\Cord'_{KK}$, $\Cord'_{Kp}$, $\Cord'_{pK}$ are isomorphic
\label{prop:framed-unframed}
to the quotients of the cord algebra and modules $\Cord_{KK}$, $\Cord_{Kp}$, $\Cord_{pK}$ obtained by imposing the relations that elements of $\Z[l^{\pm 1},m^{\pm 1}]$ commute with cords.
\end{proposition}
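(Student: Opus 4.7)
I plan to construct explicit mutually inverse maps between the unframed cord objects and the quotients of the framed cord objects by the centrality relations for $l$ and $m$. The geometric ingredient is the radial push in a tubular neighborhood of $K$: to go from framed to unframed, push cord endpoints from $K'$ inward to $K$; to go back, push from $K$ outward to $K'$. Endpoints at $p$ are left fixed throughout, so the same construction yields the three isomorphisms for $\Cord_{KK}$, $\Cord_{Kp}$, $\Cord_{pK}$ in parallel.

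First I would define $\Phi\colon \Cord_{KK} \to \Cord'_{KK}$ by sending a framed cord to its push to $K$ and sending $l, m$ to $l, m$. This is well-defined on homotopy classes since a tubular neighborhood of $K$ deformation retracts onto $K$. I would then verify that $\Phi$ respects each framed skein relation. Framed relation (1) maps directly to unframed relation (1). Framed relation (3) (the two longitudinal-wrap variants) maps to unframed relation (2); the left/right distinction collapses because $l$ is central in $\Cord'_{KK}$. Framed relation (2) (the meridian-wrap variants) becomes vacuous in $\Cord'_{KK}$: after pushing, both sides equal the pushed cord times $m$, and centrality of $m$ removes the left/right distinction. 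The essential check is that framed relation (4) maps to unframed relation (3); the discrepancy of the factor $m$ on the subtracted term arises because, when the splitting point on $K'$ in framed (4) is pushed to $K$, exactly one of the two resolved cords acquires a single meridian wrap around $K$ (forced by the fact that the two sides of the skein crossing lie on opposite sides of $K'$). Applying unframed relations (1) and (3) to resolve this wrap produces the $m$ factor appearing in unframed relation (3).

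Since $l, m$ are central in $\Cord'_{KK}$ by construction, $\Phi$ descends to a map from the quotient of $\Cord_{KK}$ by the centrality relations $[l, \gamma] = [m, \gamma] = 0$. I would construct the inverse $\Psi$ by pushing unframed cord endpoints outward from $K$ to $K'$ along the positive Seifert normal, sending $l, m$ to $l, m$. A dual computation shows $\Psi$ respects the unframed skein relations, with the factor of $m$ in unframed (3) being absorbed by the outward push (which removes an extra meridian wrap through framed relation (2)). Both compositions $\Phi \circ \Psi$ and $\Psi \circ \Phi$ act as the identity on generators, because the radial push in and out is homotopic to the identity within the tubular neighborhood. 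The analogous arguments for the modules $\Cord_{Kp}$ and $\Cord_{pK}$ are obtained by pushing only the $K$-endpoint; the module structure is preserved automatically because the push is performed away from all base points used for multiplication.

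The main obstacle is the step-two bookkeeping of which resolved strand in framed (4) picks up the extra meridian wrap after the splitting point is pushed to $K$, and verifying that this wrap has winding number exactly $+1$ (producing $m$ rather than $m^{-1}$ or a higher power). This requires an explicit local analysis near the skein crossing with $K$ oriented and $K'$ its Seifert-framed pushoff, tracking how the endpoints of the two resolved cords migrate radially onto $K$ relative to the strand of $K$ participating in the crossing.
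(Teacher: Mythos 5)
Your overall strategy --- pushing cord endpoints between $K'$ and $K$ inside a tubular neighborhood --- is the same as the paper's, but your map omits the ingredient that makes it well defined, and the gap is exactly where you declare framed relation \eqref{it:cord0} ``vacuous.'' Once an endpoint has been pushed onto $K$, a meridian wrap adjacent to that endpoint can be undone by a homotopy of unframed cords: rotating the direction from which the cord meets $K$ through a full turn drags the end of the cord around a meridian while keeping its interior off $K$ and its endpoint fixed. Hence the naive push sends the wrapped and unwrapped cords of \eqref{it:cord0} to the \emph{same} class in $\Cord'_{KK}$, and the relation becomes $\gamma' = m\,\gamma'$, i.e.\ $(1-m)\gamma'=0$, which is not a consequence of the unframed skein relations (apply the augmentation $\gamma\mapsto 1-m$, $m\mapsto m$, $l\mapsto 1$: it would force $(1-m)^2=0$ in $\Z[m^{\pm1}]$). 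Your justification that ``both sides equal the pushed cord times $m$'' asserts an identity $[\text{wrapped}]=m\cdot[\text{unwrapped}]$ that is neither an unframed relation nor true. The paper repairs precisely this point by normalizing: the map is $\gamma\mapsto m^{-\operatorname{lk}(\tilde\gamma,K)}\gamma'$, where $\tilde\gamma$ is the loop obtained by closing $\gamma$ up through a fixed reference path $\gamma_0$ from $p$ to $K'$. The exponent changes by one unit each time a meridian wrap is added, which is what renders \eqref{it:cord0} trivial; it is also what produces the relative factor of $m$ between $\gamma_1$ and $\gamma_2$ in \eqref{it:unframedcord3}, since their linking numbers with $K$ differ by $1$. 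Your alternative derivation of that factor by ``applying unframed relations (1) and (3)'' is circular, as \eqref{it:unframedcord3} is the relation you are trying to land on.

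The same issue undermines your inverse: ``push out along the positive Seifert normal'' is not well defined on homotopy classes of unframed cords, because the normal direction of approach to $K$ is not a homotopy invariant --- changing it by a full rotation changes the pushed-out cord by a meridian wrap, hence by a factor of $m$ even in the quotient of $\Cord_{KK}$ by centrality. The paper pins this down by homotoping the unframed cord so that it begins or ends with the fixed path $\pm\gamma_0$ and then truncating. With the $m^{-\operatorname{lk}}$ normalization inserted and the inverse defined via $\gamma_0$, the remainder of your outline (matching of relation \eqref{it:cord1} and the $l$-relations, preservation of the module structures) does go through as you describe.
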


\begin{proof}
This is essentially laid out in \cite[\S 2.2]{CELN}. Fix a cord $\gamma_0$ from $p$ to a point $x_0 \in K' \setminus \{\ast\}$. Given any cord $\gamma$, we can produce a loop $\tilde{\gamma}$ in $\R^3\setminus K$ based at $p$, by joining any endpoint of $\gamma$ on $K'$ to $x_0$ along (any path in) $K'$, and appending $\gamma_0$ or $-\gamma_0$ as necessary. Let $\gamma'$ be the unframed cord obtained from $\gamma$ by joining any endpoint of $\gamma$ on $K'$ to the corresponding point on $K$ by a straight line segment normal to $K$. Then the map
\[
\gamma \mapsto m^{-\operatorname{lk}(\tilde{\gamma},K)} \gamma'
\]
gives the desired isomorphisms from the quotients of $\Cord_{KK}$, $\Cord_{Kp}$, $\Cord_{pK}$ to $\Cord'_{KK}$, $\Cord'_{Kp}$, $\Cord'_{pK}$. (For the inverse maps from $\Cord'$ to $\Cord$, homotope any cord with a beginning or end point on $K$ so that it begins or ends with $\gamma_0$ or $-\gamma_0$, and then remove $\pm \gamma_0$.)
Note that the displayed map from $\Cord$ to $\Cord'$ sends the skein relations \eqref{it:cord1}, \eqref{it:cord2}, \eqref{it:cord3} in Definition~\ref{def:cordalg} to \eqref{it:unframedcord1}, \eqref{it:unframedcord2}, \eqref{it:unframedcord3} in Definition~\ref{def:unframedcordalg}, and the normalization by powers of $m$ means that \eqref{it:cord0} from Definition~\ref{def:cordalg} becomes trivial under this map.
\end{proof}

Now from \cite{Ngframed}, there is a canonical augmentation of the DGA for $K$,
\[
\epsilon :\thinspace (\A_{\Lambda_K},\partial) \to
(\Z[m^{\pm 1}],0),
\]
whose definition we recall here. Since $\A_{\Lambda_K}$ is supported in nonnegative degree, the graded map $\epsilon$ is determined by its action on the degree $0$ part of $\A_{\Lambda_K}$, or equivalently (since $\epsilon \circ \partial = 0$) by the induced action on $H_0(\A_{\Lambda_K},\partial)$. This in turn is determined by the induced action on $\Cord'_{KK}$, which by Proposition~\ref{prop:framed-unframed} is the quotient of $H_0(\A_{\Lambda_K},\partial)$ by setting $l,m$ to commute with everything. On $\Cord'_{KK}$, $\epsilon$ is defined as follows:
\begin{align*}
\epsilon(l) &= 1 \\
\epsilon(m) &= m \\
\epsilon(\gamma) &= 1-m
\end{align*}
for any unframed $KK$ cord $\gamma$. (Note that $\epsilon$ preserves the skein
relations for $\Cord'_{KK}$ and is thus well-defined.) We can extend
$\epsilon$ from an augmentation of $\A_{\Lambda_K}$ to an augmentation of $\A_{\Lambda_K \cup \Lambda_p}$ by
setting $\epsilon$ to be $0$ for any mixed chord between $\Lambda_K$
and $\Lambda_p$.
\begin{remark}
Applying \cite[Theorem 6.15]{AENV} to the holomorphic strips over binormal chords shows that the augmentation $\epsilon$ is induced by an exact Lagrangian filling $M_K$ diffeomorphic to the knot complement, obtained by joining the conormal $L_K$ and the zero-section $Q$ via Lagrange surgery along the knot $K$.
\end{remark}

Linearizing with respect to this augmentation gives
the linearized contact homology
\[
\LCH^\epsilon_*(\Lambda_K \cup \Lambda_p) =
(\LCH^\epsilon_*)_{\Lambda_K,\Lambda_K} \oplus
(\LCH^\epsilon_*)_{\Lambda_K,\Lambda_p} \oplus
(\LCH^\epsilon_*)_{\Lambda_p,\Lambda_K} \oplus
(\LCH^\epsilon_*)_{\Lambda_p,\Lambda_p}.
\]

As discussed previously, in \cite{Ngframed} it is shown that
$(\LCH^\epsilon_1)_{\Lambda_K,\Lambda_K}$ recovers the Alexander
module $H_1(\tilde{X}_K)$. Here instead we have the following.

\begin{proposition}
We have isomorphisms of $\Z[m^{\pm 1}]$-modules
\[
(\LCH^\epsilon_0)_{\Lambda_K,\Lambda_p} \cong
(\LCH^\epsilon_1)_{\Lambda_p,\Lambda_K} \cong H_1(\tilde{X}_K) \oplus
\Z[m^{\pm 1}].
\]
\end{proposition}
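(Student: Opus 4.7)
The plan is to compute both linearized homologies via the cord module identification (Propositions~\ref{prop:cord-triple} and~\ref{prop:framed-unframed}), match them with $H_1(\tilde X_K,\tilde p)$ through a Wirtinger or braid description, and then extract the direct-sum decomposition from the long exact sequence of the pair $(\tilde X_K,\tilde p)$, where $\tilde X_K\to\R^3\setminus K$ is the infinite cyclic cover corresponding to abelianization and $\tilde p$ is the preimage of $p$.

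First I would use the cord interpretation. Since $\epsilon$ factors through the quotient making $l,m$ commute with cords, and since $\A^{(1)}_{\Lambda_K,\Lambda_p}$ and $\A^{(1)}_{\Lambda_p,\Lambda_K}$ are free as $\A_{\Lambda_K,\Lambda_K}$-modules (as $\Lambda_p$ has no self Reeb chords and $H_1(\Lambda_p)=0$), one obtains
\[
(\LCH^\epsilon_0)_{\Lambda_K,\Lambda_p}\cong\Z[m^{\pm 1}]\otimes_{\Cord'_{KK}}\Cord'_{Kp},\qquad
(\LCH^\epsilon_1)_{\Lambda_p,\Lambda_K}\cong\Cord'_{pK}\otimes_{\Cord'_{KK}}\Z[m^{\pm 1}],
\]
with $\Cord'_{\bullet}$ the unframed cord algebra and modules of Definition~\ref{def:unframedcordalg}.

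Second, I would compute these tensor products explicitly via a braid presentation of $K$. Taking $K$ to be the closure of an $n$-braid, there are $n$ distinguished unframed $Kp$ cords $\alpha_1,\dots,\alpha_n$, one going directly from each strand out to $p$; any other $Kp$ cord can be expressed as a $\Z[m^{\pm 1}]$-linear combination of the $\alpha_j$ by iterating the linearized crossing skein relation \eqref{it:unframedcord3} (of the form $\alpha_+-m\alpha_-=(1-m)\delta$) together with the $\epsilon$-relation $c\cdot\alpha=(1-m)\alpha$ imposed by the tensor product. The remaining relations among the $\alpha_j$, coming from the skein relations at the braid crossings, are precisely the rows of the abelianized Fox-Jacobian matrix $A^{\phi}=(\partial r_i/\partial x_j)^{\phi}$ of the Wirtinger-type relations $r_i$ associated to the braid. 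Hence $\Z[m^{\pm 1}]\otimes_{\Cord'_{KK}}\Cord'_{Kp}\cong\Z[m^{\pm 1}]^n/\mathrm{im}(A^{\phi})$, which is exactly the standard Fox-calculus presentation of $H_1(\tilde X_K,\tilde p)$ for the CW decomposition of $X_K=\R^3\setminus K$ coming from the braid. The same argument applied to the $pK$ cord module gives the analogous identification for $(\LCH^\epsilon_1)_{\Lambda_p,\Lambda_K}$, with the degree shift of Proposition~\ref{prop:degrees} accounting for the grading.

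Finally, the long exact sequence of the pair $(\tilde X_K,\tilde p)$ reads
\[
0\to H_1(\tilde X_K)\to H_1(\tilde X_K,\tilde p)\to H_0(\tilde p)\to H_0(\tilde X_K)\to 0,
\]
which as $\Z[m^{\pm 1}]$-modules is $0\to H_1(\tilde X_K)\to H_1(\tilde X_K,\tilde p)\to\Z[m^{\pm 1}]\to\Z\to 0$ with the last map the augmentation $m\mapsto 1$. Its kernel is the principal ideal $(m-1)\Z[m^{\pm 1}]$, which is free of rank one over $\Z[m^{\pm 1}]$; the remaining short exact sequence therefore splits, giving $H_1(\tilde X_K,\tilde p)\cong H_1(\tilde X_K)\oplus\Z[m^{\pm 1}]$, as required. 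The main obstacle is the second step: reducing the linearized cord module to the $n$ generators $\alpha_1,\dots,\alpha_n$ and then matching the resulting relations term-for-term with the Alexander/Fox-Jacobian presentation of $H_1(\tilde X_K,\tilde p)$ is an extended computational task in the spirit of \cite{Ngframed, CELN}, and careful bookkeeping for orientations, signs, and the interplay between the $\epsilon$-relation $c\cdot\alpha=(1-m)\alpha$ and the crossing skein \eqref{it:unframedcord3} is where the bulk of the work lies.
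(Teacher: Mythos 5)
Your proposal is correct and follows essentially the same route as the paper: identify the linearized homology with $\Cord'_{pK}\otimes_\epsilon\Z[m^{\pm1}]$ (resp.\ its $Kp$ mirror), reduce to one generator per arc of a diagram with the crossing relation $\gamma_1-m\gamma_2=(1-m)\gamma_3$, and recognize this as the standard (Fox calculus / coloring) presentation of the Alexander module. The only cosmetic differences are that the paper uses a generic knot diagram rather than a braid closure and simply cites the resulting presentation as the well-known one for $H_1(\tilde X_K)\oplus\Z[m^{\pm1}]$ (the Alexander quandle), whereas you make that last identification explicit via $H_1(\tilde X_K,\tilde p)$ and the long exact sequence of the pair.
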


\begin{proof}
We will prove the isomorphism for
$(\LCH^\epsilon_1)_{\Lambda_p,\Lambda_K}$; the isomorphism for
$(\LCH^\epsilon_0)_{\Lambda_K,\Lambda_p}$ follows by symmetry between
$\Cord'_{Kp}$ and $\Cord'_{pK}$. The complex whose homology computes
$(\LCH^\epsilon_*)_{\Lambda_p,\Lambda_K}$ is the
free $\Z[m^{\pm 1}]$-module generated by Reeb chords to $\Lambda_p$
from $\Lambda_K$, with the differential $\d^{\text{lin}}$ given by applying the
augmentation $\epsilon$ to all pure Reeb chords from $\Lambda_K$ to
itself to the usual differential $\d$. In particular, since the degree $1$ homology $(\LCH^\epsilon_1)_{\Lambda_p,\Lambda_K}$ is the quotient of the $\Z[m^{\pm 1}]$-module generated by degree $1$ Reeb chords to $\Lambda_p$ from $\Lambda_K$ by the image of $\d^{\text{lin}}$, we have:
\[
(\LCH^\epsilon_1)_{\Lambda_p,\Lambda_K}
\cong R_{pK} \otimes_\epsilon \Z[m^{\pm 1}]
\cong \Cord'_{pK} \otimes_\epsilon \Z[m^{\pm 1}].
\]
Here by ``$\otimes_\epsilon$'' we mean $\otimes_{R_{KK}}$ (or $\otimes_{\Cord'_{KK}}$) where we use $\epsilon$ to give $\Z[m^{\pm 1}]$ the structure of an $R_{KK}$-module (or $\Cord'_{KK}$-module), and
implicitly we are setting $l,m$ to commute with everything in
$(\LCH^\epsilon_1)_{\Lambda_p,\Lambda_K}$ and $R_{pK}$.

\begin{figure}
\labellist
\small\hair 2pt
\pinlabel ${\color{green} p}$ at 3 67
\pinlabel ${\color{green} p}$ at 456 153
\pinlabel $K$ at 183 124
\pinlabel $K$ at 240 122
\pinlabel $K$ at 484 37
\pinlabel ${\color{red} \gamma_1}$ at 160 32
\pinlabel ${\color{red} \gamma_2}$ at 173 81
\pinlabel ${\color{red} \gamma_3}$ at 134 62
\pinlabel ${\color{red} \gamma_1}$ at 419 62
\pinlabel ${\color{red} \gamma_3}$ at 440 41
\pinlabel ${\color{red} \gamma_2}$ at 459 20
\endlabellist
\centering
\includegraphics[width=\textwidth]{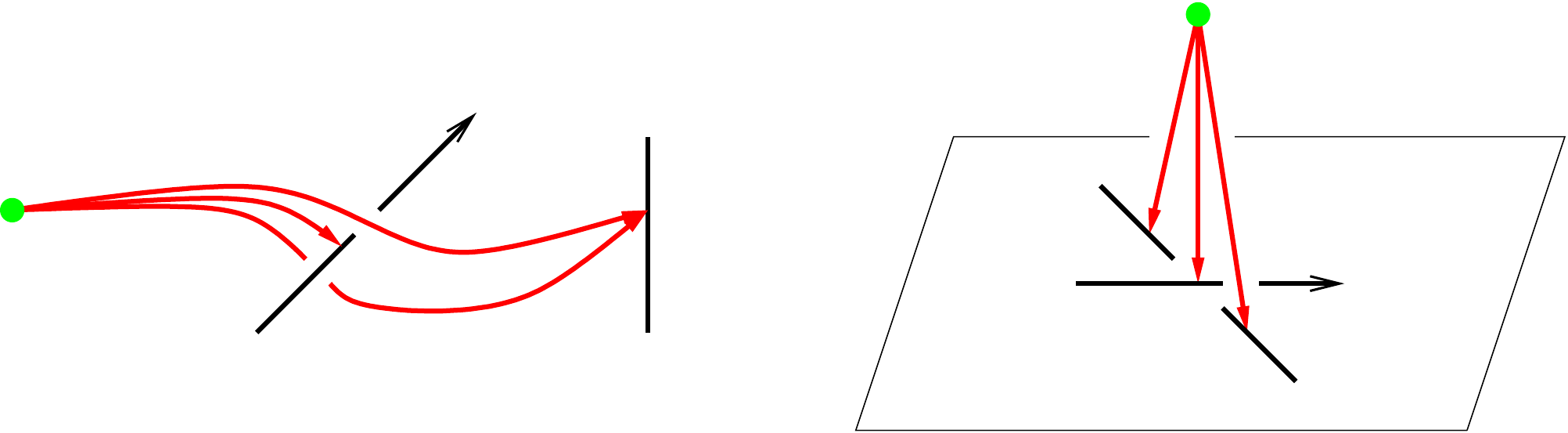}
\caption{Three $pK$ cords $\gamma_1,\gamma_2,\gamma_3$ related by
$\gamma_1-m\gamma_2 = (1-m)\gamma_3$.}
\label{fig:quandle}
\end{figure}

Now $\Cord'_{pK} \otimes_\epsilon \Z[m^{\pm 1}]$ is the quotient of the
free $\Z[m^{\pm 1}]$-module generated by unframed $pK$ cords by the skein
relations \eqref{it:unframedcord2} and \eqref{it:unframedcord3} from Definition~\ref{def:unframedcordalg}, where $l$ is sent to $1$
and all $KK$ cords are sent to $1-m$. Relation \eqref{it:unframedcord2} then says that $pK$ cords are unchanged if we move their $K$ endpoint over $\ast$, while relation \eqref{it:unframedcord3} becomes:
\[
\raisebox{-3ex}{\includegraphics[height=7ex]{skein31a}}
- m \raisebox{-3ex}{\includegraphics[height=7ex]{skein31b}}
= (1-m)\raisebox{-3ex}{\includegraphics[height=7ex]{skein31c}}.
\]
That is, if $\gamma_1,\gamma_2,\gamma_3$ are unframed $pK$ cords that are related as shown in the left side of Figure~\ref{fig:quandle}, then we impose the relation:
\[
\gamma_1 - m \gamma_2 = (1-m) \gamma_3.
\]

Thus we can describe $\Cord'_{pK} \otimes_\epsilon \Z[m^{\pm 1}]$ in terms of a knot diagram for $K$ as follows. Use the diagram to place $K$ in a neighborhood of the $xy$ plane in $\R^3$, and place $p$ high above the $xy$ plane along the $z$ axis. If the diagram has $n$ crossings, then it divides $K$ into $n$ strands from undercrossing to undercrossing. Then
$\Cord'_{pK} \otimes_\epsilon \Z[m^{\pm 1}]$ is generated by $n$ unframed $pK$ cords, namely straight line segments from $p$ to any point on each of these strands, and each crossing gives a relation $\gamma_1 - m \gamma_2 = (1-m) \gamma_3$ if $\gamma_1,\gamma_2,\gamma_3$ are as shown in the right side of Figure~\ref{fig:quandle}. But this is the well-known presentation of $H_1(\tilde{X}_K) \oplus \Z[m^{\pm 1}]$ from knot colorings. In particular, what we have just described is the Alexander quandle of $K$, see \cite{Joyce}.
\end{proof}

\begin{remark}
The description we have given in this section for the unframed cord modules is highly reminiscent of the construction of the knot quandle from \cite{Joyce}, which is known to be a complete invariant. However, we do not know how to extract the entire knot quandle, rather than just the Alexander quandle (which is a quotient), from the unframed cord module.
\end{remark}

\subsection{String homology in terms of the knot group}
\label{ssec:brackets}

Having expressed the KCH-triple $(R_{KK},R_{Kp},R_{pK})$ in terms of cords in Section~\ref{ssec:cordalg}, our next step en route to proving Theorem~\ref{thm:main} is to rewrite the KCH-triple further, in terms of the
knot group and the peripheral subgroup of the knot $K$. For $R_{KK}$,
which is the degree $0$ knot contact homology of $K$, this was done in
\cite[\S 2.3--2.4]{CELN}, and we follow the treatment there.

Write
$\pi = \pi_1(\R^3\setminus K)$ for the knot group and $\hat\pi =
\pi_1(\Lambda_K)$ for the peripheral subgroup. A framing and
orientation on $K$ gives meridian and longitude classes $m,l \in
\hat\pi$, which we can then view as classes in $\pi$ as well.
In what follows, we place square
brackets around elements of $\pi$ and curly brackets around elements
of $\hat\pi$.

Define $S$ to be the $\Z$-module freely generated by words
that are formal products of nontrivial words whose letters are
alternately in $\pi$ and $\hat\pi$,
divided by the following string relations, where we use $x$ and $\alpha$ to denote elements of $\pi$ and $\hat\pi$ respectively:
\begin{enumerate}
\item \label{it:str1}
$\cdots_1 [x\alpha_1]\{\alpha_2\} \cdots_2 = \cdots_1 [x]\{\alpha_1\alpha_2\}
\cdots_2 $
\item \label{it:str2}
$\cdots_1 \{\alpha_1\}[\alpha_2 x] \cdots_2  = \cdots_1 \{\alpha_1\alpha_2\}[x] \cdots_2$
\item \label{it:str3}
$(\cdots_1 [x_1x_2] \cdots_2) - (\cdots_1 [x_1m x_2] \cdots_2) = \cdots_1
[x_1]\{1\}[x_2] \cdots_2$
\item \label{it:str4}
$(\cdots_1 \{\alpha_1\alpha_2\} \cdots_2) - (\cdots_1
\{\alpha_1m\alpha_2\} \cdots_2 ) = \cdots_1 \{\alpha_1\}[1]\{\alpha_2\} \cdots_2$.
\end{enumerate}

Note that there is no restriction on generators of $S$ as to whether
the first or last letters are in $\pi$ or $\hat\pi$.
We can define a product on $S$ as follows:
multiplication of two words $w_1,w_2$ generating $S$ is zero
unless the last letter of $w_1$ and the first letter of $w_2$ are both
in $\pi$ or both in $\hat\pi$, in which case it is concatenation
combined with the product in $\pi$ or $\hat\pi$; that is,
\begin{align*}
(\cdots_1 \{\alpha_1\}) \cdot (\{\alpha_2\} \cdots_2)
&= \cdots_1 \{\alpha_1\alpha_2\} \cdots_2 \\
(\cdots_1 [x_1]) \cdot ([x_2] \cdots_2)
&= \cdots_1 [x_1x_2] \cdots_2.
\end{align*}

We now have the following result identifying $R_{KK}$, $R_{Kp}$, $R_{pK}$
from Section~\ref{ssec:broken-strings} with summands of $S$.

\begin{proposition}
$R_{KK}$, $R_{Kp}$, and $R_{pK}$
\label{prop:KCHtriple-string}
are isomorphic to the
$\Z$-submodules of $S$ generated by the following sets:
\begin{itemize}
\item
for $R_{KK}$, words
beginning and ending in $\hat\pi$;
\item
for $R_{Kp}$, words beginning in $\hat\pi$ and ending in
$\pi$;
\item
for $R_{pK}$, words
beginning in $\pi$ and ending in $\hat\pi$.
\end{itemize}
Multiplication in $S$
induces maps $R_{KK} \otimes R_{KK} \to R_{KK}$, $R_{KK} \otimes
R_{Kp} \to R_{Kp}$, $R_{pK} \otimes R_{KK} \to R_{pK}$ that agree
with, respectively,
the ring structure on $R_{KK}$ and the $R_{KK}$-module structure on
$R_{Kp}$ and $R_{pK}$.
\end{proposition}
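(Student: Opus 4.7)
The plan is to first invoke Proposition~\ref{prop:cord-triple} to replace the KCH-triple $(R_{KK},R_{Kp},R_{pK})$ with the cord algebra $\Cord_{KK}$ and the cord modules $\Cord_{Kp}$, $\Cord_{pK}$, and then to extend the argument of \cite[\S 2.3--2.4]{CELN}, which handles the $R_{KK}$ case by identifying $\Cord_{KK}$ with the sub-$\Z$-module of $S$ generated by alternating words beginning and ending in $\hat\pi$. The only novelty is dealing with the mixed endpoint configurations of $\Cord_{Kp}$ and $\Cord_{pK}$.

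For the explicit map, I would fix a basepoint $\ast \in K'$ and a reference path from $p$ to $\ast$ in $\R^3 \setminus K$. To a single $KK$ cord $\gamma$ with endpoints $p_1,p_2 \in K'$, I associate the word $\{\alpha_0\}[x]\{\alpha_1\}$, where $\alpha_0,\alpha_1 \in \hat\pi$ encode paths on $K'$ from $\ast$ to $p_1$ and from $p_2$ back to $\ast$, and $[x] \in \pi$ is the homotopy class of the closed-up loop $\alpha_0 \cdot \gamma \cdot \alpha_1$ in $\R^3\setminus K$. A single $Kp$ cord has no $\hat\pi$-letter at its $p$-endpoint (there is no loop at $p$ to rotate), so it maps to a word $\{\alpha\}[x]$; a $pK$ cord similarly maps to $[x]\{\alpha\}$. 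Products of $n$ cords in $\Cord_{KK}$, $\Cord_{Kp}$, $\Cord_{pK}$ then extend by concatenation to alternating words of lengths $2n+1$, $2n$, $2n$ respectively, landing in the submodules of $S$ predicted by the proposition. Conversely, any such alternating word arises in this way by reading off $\pi$-letters as cords and $\hat\pi$-letters as sliding between cords along $K'$.

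The central step is to verify that the cord skein relations of Definition~\ref{def:cordalg} translate exactly to the string relations defining $S$. The slide relations \eqref{it:str1}, \eqref{it:str2} record the ambiguity in choosing the paths $\alpha_i$ on $K'$; the relations \eqref{it:cord0}--\eqref{it:cord2} correspond to conjugation by $m$ and $l$ inside $\hat\pi$ together with absorption of $l^{\pm 1}$ into the $\hat\pi$-letters; and the triangular skein relation \eqref{it:cord3} translates directly to the relations \eqref{it:str3}, \eqref{it:str4}, with the $m$-twist recording a small meridional wrap near $K$ and the $[1]$ or $\{1\}$ insertion recording where the cord meets $K$. Crucially, all of the cord skein relations act in a neighborhood of $K$ and are insensitive to what happens near $p$, so the translation for $\Cord_{Kp}$ and $\Cord_{pK}$ proceeds identically to that for $\Cord_{KK}$. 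Compatibility with the $R_{KK}$-bimodule structure on $R_{Kp}$, $R_{pK}$ is then immediate, since concatenating two cord products whose endpoints match on $K'$ corresponds to multiplying the two associated $\hat\pi$-letters in $S$.

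The main obstacle, I expect, is the combinatorial bookkeeping needed to show that each string relation is a consequence of the cord skein relations and vice versa, or equivalently that the inverse map $S \to \Cord$ (splitting each alternating word at its $\hat\pi$-letters into cords) is well-defined. This amounts to a finite case analysis mirroring that of \cite{CELN} for $\Cord_{KK}$, carried out once for each new configuration of endpoints on $K'$ versus $p$.
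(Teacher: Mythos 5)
Your proposal follows essentially the same route as the paper's proof: both reduce to the cord algebra and modules via Proposition~\ref{prop:cord-triple}, close up cords to loops based at $p$ using a reference path as in Proposition~\ref{prop:framed-unframed}, and match the cord skein relations with the string relations defining $S$ (the paper simply cites \cite[Proposition 2.14]{CELN} for the details you spell out, noting that the three endpoint configurations account for the three types of alternating words). One minor bookkeeping point: string relation \eqref{it:str4} arises from the trivial-cord relation \eqref{it:cord1} rather than from the triangular relation \eqref{it:cord3}, but this does not affect the argument.
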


\begin{proof}
Same as the proof of \cite[Proposition 2.14]{CELN}. Briefly, by Proposition~\ref{prop:cord-triple}, the KCH-triple is isomorphic to $(\Cord_{KK},\Cord_{Kp},\Cord_{pK})$. Given a cord, we can produce a closed loop in $\R^3\setminus K$ based at $p$, and hence an element of $\pi$, as in the proof of Proposition~\ref{prop:framed-unframed}. Thus products of cords, with elements of $\Z[l^{\pm 1},m^{\pm 1}]$ in between, correspond to alternating products of elements of $\pi$ and $\hat\pi$. The string relations on $S$ come from the skein relations on cords. Note that the distinct behaviors of $R_{KK}$, $R_{Kp}$, and $R_{pK}$ in the statement of Proposition~\ref{prop:KCHtriple-string} come from the construction of the cord algebra and modules: an element of $\Cord_{KK}$ begins and ends with an element of $\hat\pi$ (possibly $1$), while an element of $\Cord_{Kp}$ begins with an element of $\hat\pi$ and ends with a cord (which maps to $\pi$), and similarly for $\Cord_{pK}$.
\end{proof}

To clarify:
\begin{itemize}
\item
$R_{KK}$ is generated by $\{\alpha_1\}$, $\{\alpha_1\}[x_1]\{\alpha_2\}$,
$\{\alpha_1\}[x_1]\{\alpha_2\}[x_2]\{\alpha_3\}$, $\ldots$
\item
$R_{Kp}$ is generated by $\{\alpha_1\}[x_1]$,
$\{\alpha_1\}[x_1]\{\alpha_2\}[x_2]$, $\ldots$
\item
$R_{pK}$ is generated by $[x_1]\{\alpha_1\}$,
$[x_1]\{\alpha_2\}[x_2]\{\alpha_2\}$, $\ldots$
\end{itemize}
where $\alpha_i \in \hat\pi$ and $x_i \in \pi$.
To this, we can then add:
\begin{itemize}
\item
$R_{pp} = R_{pK} \otimes_{R_{KK}} R_{Kp}$ is generated by
$[x_1]\{\alpha_1\}[x_2]$,
$[x_1]\{\alpha_1\}[x_2]\{\alpha_2\}[x_3]$,
$\ldots$.
\end{itemize}

Finally, the product $\mu :\thinspace R_{Kp} \otimes R_{pK} \to
R_{KK}$ has a simple interpretation in terms of $S$, since by
Proposition~\ref{prop:cord-triple} it is the concatenation product:
\[
\mu(\{\alpha_0\} \cdots \{\alpha_1\}[x_1],
[x_2] \{\alpha_2\} \cdots \{\alpha_3\}) =
\{\alpha_0\} \cdots \{\alpha_1\}[x_1x_2] \{\alpha_2\} \cdots \{\alpha_3\}.
\]
The product on $R_{pp} = R_{pK} \otimes_{R_{KK}} R_{Kp}$ induced by
$\mu$ is then also given by concatenation.

\subsection{The KCH-triple within $\Z[\pi_1(\R^3\setminus K)]$}
\label{ssec:kchtriple}

Although the notation from Section~\ref{ssec:brackets} using square
and curly brackets is natural from the viewpoint of broken strings, it
will be convenient for our purposes to reinterpret the KCH-triple
$(R_{KK},R_{Kp},R_{pK})$ directly in terms of the group ring of the knot group,
which we henceforth denote by
\[
R := \Z[\pi_1(\R^3\setminus K)].
\]
This is the content of Proposition~\ref{prop:triple-isom} below.

To prepare for this result, extend the notation $\cdots [x_1] \{\alpha_1\} [x_2]
\cdots \in S$, where up to now we have $x_i \in \pi$ and $\alpha_i \in
\hat\pi$, by linearity to allow for arbitrary $x_i \in R = \Z\pi$.
Given any element of $S$ of the form $\cdots [x_1] \{\alpha_1\} [x_2]
\cdots$ with $\alpha_i \in
\hat\pi$ and $x_i \in \Z\pi$, the string relations on $S$ allow us to
get rid of any internal part in curly braces, where ``internal'' means
not at the far left or far right. More precisely, by
\eqref{it:str1} and \eqref{it:str3} from the defining relations for
$S$ in Section~\ref{ssec:brackets}, we can write:
\[
\cdots [x_1] \{\alpha_1\} [x_2]
\cdots =
\cdots [x_1\alpha_1x_2] \cdots - \cdots [x_1\alpha_1mx_2] \cdots.
\]
This allows us to inductively reduce the number of internal curly
braces until none are left.

Thus for instance we can write any element of $R_{Kp}$ as a linear
combination of elements of the form $\{\alpha_1\}[x_1]$, where $x_1
\in \Z\pi$, and this in turn is equal to $\{1\}[\alpha_1x_1]$ by
string relation \eqref{it:str2}. Similar results hold for $R_{pK}$ and
$R_{KK}$, as well as for $R_{pp}$, and we conclude the following:

\begin{proposition}
As $\Z$-submodules of $S$, we have:
\label{prop:generated}
\begin{itemize}
\item
$R_{KK}$ is generated by the elements of the form $\{\alpha\}$ and
$\{1\}[x]\{1\}$ for
$\alpha\in\hat\pi$ and $x\in\pi$;
\item
$R_{Kp}$ is generated by $\{1\}[x]$ for $x\in\pi$;
\item
$R_{pK}$ is generated by $[x]\{1\}$ for $x\in\pi$;
\item
$R_{pp}$ is generated by $[x_1]\{1\}[x_2]$ for $x_1,x_2\in\pi$.
\end{itemize}
\end{proposition}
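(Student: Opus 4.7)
My plan is a two-step reduction carried out in parallel for all four submodules. The first step uses string relations (1) and (2) to make every curly bracket contain $1$; the second uses relation (3) to eliminate every \emph{interior} $\{1\}$ (one that sits strictly between two square brackets).

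Relation (1), specialized via $\alpha_1=\alpha$ and $\alpha_2=1$, becomes $[y]\{\alpha\}=[y\alpha]\{1\}$, and relation (2) becomes $\{\alpha\}[y]=\{1\}[\alpha y]$. So any curly bracket adjacent to a square bracket can be absorbed into that bracket. Applying these moves to any generator converts it into a word of the shape
\[
\{1\}[y_1]\{1\}\cdots\{1\}[y_k]\{1\},\quad \{1\}[y_1]\{1\}\cdots\{1\}[y_k],\quad [y_1]\{1\}\cdots\{1\}[y_k]\{1\},\quad [y_1]\{1\}\cdots\{1\}[y_k]
\]
for $R_{KK}$ (with $k\geq 1$), $R_{Kp}$, $R_{pK}$, and $R_{pp}$ respectively, where each $y_i\in\pi$ (since $\hat\pi\subset\pi$, the absorbed products remain in $\pi$). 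The only exception is $\{\alpha\}\in R_{KK}$, which has no adjacent bracket and is already of the desired form.

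Next I apply relation (3) in the rewriting form
\[
\cdots_1\,[y_i]\{1\}[y_{i+1}]\,\cdots_2 \;=\; \cdots_1\,[y_iy_{i+1}]\,\cdots_2 \;-\; \cdots_1\,[y_i m y_{i+1}]\,\cdots_2,
\]
which replaces a word with $n\geq 1$ interior $\{1\}$'s by a $\Z$-linear combination of two words with $n-1$ interior $\{1\}$'s each, the bracketed elements again staying in $\pi$. Iterating drives the number of interior $\{1\}$'s down to the minimum allowed by the shape of the submodule: zero for $R_{Kp}$ and $R_{pK}$, and one for $R_{pp}$ and (apart from the exceptional case $\{\alpha\}$) $R_{KK}$. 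The resulting words are exactly those listed in the statement.

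The one point requiring care---and the only part that is not purely mechanical---is knowing when to stop applying (3): the relation must be used only in a context where the surrounding word is nonempty, or else the result leaves the submodule (for instance, $R_{pp}$ does not contain single-bracket words $[x]$, and an element of $R_{KK}$ must retain its two boundary curly brackets). The shape preserved by Step 1 makes this bookkeeping transparent: one simply halts the iteration the moment the asserted normal form is reached.
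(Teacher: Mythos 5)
Your proof is correct and takes essentially the same route as the paper's: the paper first eliminates internal curly braces by combining relations (1) and (3) into the single rewriting $\cdots[x_1]\{\alpha_1\}[x_2]\cdots=\cdots[x_1\alpha_1x_2]\cdots-\cdots[x_1\alpha_1mx_2]\cdots$ and then normalizes the boundary braces via (2), whereas you normalize all curly braces to $\{1\}$ first and then remove the interior ones --- a harmless reordering of the same two steps (the resulting terms agree because $m$ is central in $\hat\pi$).
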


Write $\hat R = \Z[\hat\pi]=\Z[l^{\pm 1},m^{\pm 1}]$, and view
$\hat R$ as a subring of $R$. In \cite[Proposition~2.20]{CELN}, it is
shown that the map $\{\alpha\} \mapsto \alpha$, $\{1\}[x]\{1\} \mapsto
x(1-m)$ induces an isomorphism from $R_{KK}$ to $\hat R+R(1-m)$, where
the latter is viewed as a subring of $R$ (and $R(1-m)$ is the left
ideal generated by $1-m$).
\begin{remark}\label{r:choiceofsmoothing}
To be precise,
the map in \cite[Proposition~2.20]{CELN} is from $R_{KK}$ to $\hat
R+(1-m)R$ rather than $\hat R+R(1-m)$, and sends $\{1\}[x]\{1\}$ to
$(1-m)x$ rather than $x(1-m)$. This however is just a choice of where
to place the $(1-m)$ factors. Our convention can be derived from the
convention in \cite{CELN} by the symmetry that reverses the order of
words in $S$.
\end{remark}
We can now generalize this
isomorphism to the entire KCH-triple.

\begin{proposition}
We have $\Z$-module isomorphisms between the KCH-triple and the following
$\Z$-submodules of $R = \Z[\pi]$:
\label{prop:triple-isom}
\begin{align*}
R_{KK} &\stackrel{\cong}{\to} \hat R+R(1-m) &
\{\alpha\} &\mapsto \alpha\, ,
\; \{1\}[x]\{1\} \mapsto x(1-m); \\
R_{Kp} &\stackrel{\cong}{\to} R &
\{1\}[x] &\mapsto x; \\
R_{pK} &\stackrel{\cong}{\to} R(1-m) &
[x]\{1\} &\mapsto x(1-m),
\end{align*}
where the second and third isomorphisms hold for any knot $K$ and the
first isomorphism holds as long as $K$ is not the unknot.
We use $\phi$ to denote all of these isomorphisms; then it is
furthermore the case that $\phi$ sends
all multiplications $R_{KK} \otimes R_{KK} \to R_{KK}$, $R_{KK} \otimes
R_{Kp} \to R_{Kp}$, $R_{pK} \otimes R_{KK} \to R_{pK}$, as well as the
product $R_{Kp} \otimes R_{pK} \to R_{KK}$, to
multiplication in $R$.
\end{proposition}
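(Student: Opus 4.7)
The $R_{KK}$ isomorphism is \cite[Proposition~2.20]{CELN}, modulo the left/right convention of Remark~\ref{r:choiceofsmoothing}. My plan is to treat $R_{Kp}$ by constructing a two-sided inverse to the claimed $\phi$; the case of $R_{pK}$ is the left-right mirror. Define $\psi\colon R\to R_{Kp}$ by $\psi(x)=\{1\}[x]$ for $x\in\pi$, extended $\Z$-linearly. Since $R=\Z\pi$ is the free $\Z$-module on $\pi$, $\psi$ is well-defined with no relations to check. Proposition~\ref{prop:generated} provides the surjectivity of $\psi$, and $\phi\circ\psi$ is visibly the identity on $R$. Thus the whole proposition for $R_{Kp}$ reduces to showing $\psi$ injective, that is, that the elements $\{1\}[x]$ for $x\in\pi$ are $\Z$-linearly independent in $R_{Kp}$.

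Before addressing injectivity, I would check that $\psi$ intertwines the left $R_{KK}$-actions, as this will give the module compatibility statements essentially for free. For $\alpha\in\hat\pi$, string relation \eqref{it:str2} gives $\alpha\cdot\psi(x)=\{\alpha\}\cdot\{1\}[x]=\{\alpha\}[x]=\{1\}[\alpha x]=\psi(\alpha x)$, while for the other type of $R_{KK}$-generator $z(1-m)\in R(1-m)$, corresponding to $\{1\}[z]\{1\}$, relation \eqref{it:str3} gives
\[
z(1-m)\cdot\psi(x)=\{1\}[z]\{1\}[x]=\{1\}[zx]-\{1\}[zmx]=\psi\bigl(z(1-m)x\bigr).
\]
The right-action statement for $R_{pK}$ is symmetric, and compatibility with $\mu$ follows from Proposition~\ref{prop:cord-triple}, which identifies $\mu$ with concatenation: $\{1\}[x_1]\cdot[x_2]\{1\}=\{1\}[x_1 x_2]\{1\}$, which maps under $\phi$ to $x_1 x_2(1-m)=\phi(\{1\}[x_1])\cdot\phi([x_2]\{1\})$ in $R$.

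For injectivity of $\psi$ I plan to use the cord-module interpretation. By Proposition~\ref{prop:cord-triple}, $R_{Kp}\cong\Cord_{Kp}$; fixing the reference path $\gamma_0$ from the proof of Proposition~\ref{prop:framed-unframed}, every $Kp$ cord $\gamma$ yields a loop in $\R^3\setminus K$ based at $p$ by closing $\gamma$ with a path in $K'$ to the basepoint $x_0$ followed by $\gamma_0^{-1}$. This gives a map $\Cord_{Kp}\to R$ that matches $\phi$ under the identifications above, and the claim is that it is an isomorphism of $\Z$-modules. That verification is an essentially verbatim adaptation of the argument for the $R_{KK}$ case in \cite[Proposition~2.20]{CELN}: the skein relations \eqref{it:cord0}, \eqref{it:cord2}, \eqref{it:cord3} for $Kp$ cords translate directly into the Wirtinger relations for $\pi$, and having a free endpoint at $p$ only simplifies the combinatorics because no framing correction is needed on that side.

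The case $R_{pK}\to R(1-m)$ follows by the left-right mirror of this argument: generators reduce to $[x]\{1\}$ with image $x(1-m)$, and the image spans exactly the left ideal $R(1-m)$ since each generator carries a trailing $(1-m)$ factor. The main obstacle, as indicated, is the Wirtinger-based injectivity step; this is where one must import the substance of \cite[Proposition~2.20]{CELN}, while everything else is direct bookkeeping with the string relations.
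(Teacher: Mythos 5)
Your treatment of $R_{Kp}$ is essentially the paper's argument in disguise, but with the logical weight misplaced. Once one knows that $\phi$ is well defined on $R_{Kp}$ --- i.e., that the assignment extending $\{1\}[x]\mapsto x$ to all words in $S$ kills the string relations \eqref{it:str1}--\eqref{it:str4} --- surjectivity of $\psi$ from Proposition~\ref{prop:generated} together with $\phi\circ\psi=\mathrm{id}_R$ already forces $\psi$ and $\phi$ to be mutually inverse bijections; there is no residual injectivity statement left to prove, so the separate ``Wirtinger/cord'' step you flag as the main obstacle is not needed. Conversely, if well-definedness of $\phi$ is not established first, then ``$\phi\circ\psi$ is visibly the identity'' has no content. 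The actual work is the elementary verification that the extension of $\phi$ to $S$ preserves the four string relations; this is the paper's route, and it is much lighter than the full argument of \cite[Proposition~2.20]{CELN} (which is what forces the knottedness hypothesis in the $R_{KK}$ case). A ``verbatim adaptation'' of that argument would risk importing a hypothesis that the $R_{Kp}$ statement does not need.

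The genuine gap is in $R_{pK}$. With the conventions of Remark~\ref{r:choiceofsmoothing} the map is $[x]\{1\}\mapsto x(1-m)$, not $[x]\{1\}\mapsto x$, so this case is \emph{not} the left--right mirror of $R_{Kp}$: after reducing every element to the form $[x]\{1\}$ with $x\in R$, injectivity requires that $x(1-m)=0$ in $R=\Z[\pi]$ forces $x=0$, i.e., that $1-m$ is not annihilated by left multiplication. Your argument addresses surjectivity onto $R(1-m)$ (``each generator carries a trailing $(1-m)$ factor'') but says nothing about this. The paper supplies the missing ingredient: knot groups are left orderable \cite{HowieShort}, and group rings of left-orderable groups have no zero divisors \cite{Higman}. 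Without some such input the $R_{pK}$ isomorphism does not follow, and this same zero-divisor fact is needed again later in the proof of Theorem~\ref{thm:main}, so it cannot be dismissed as bookkeeping.
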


\begin{proof}
This follows the proof of \cite[Proposition~2.20]{CELN}.
To see that $\phi$ is well-defined, extend the definition of $\phi$ to
all generators of $S$ (ignoring Proposition~\ref{prop:generated} for
the moment) by:
\begin{align*}
\{\alpha_1\}[x_1]\{\alpha_2\} \cdots \{\alpha_{k-1}\}[x_k]\{\alpha_k\}
&\mapsto \alpha_1(1-m)x_1\alpha_2(1-m)\cdots\alpha_{k-1}(1-m)x_k\alpha_k \\
\{\alpha_1\}[x_1]\{\alpha_2\} \cdots \{\alpha_{k-1}\}[x_k]
&\mapsto \alpha_1x_1\alpha_2(1-m)\cdots\alpha_{k-1}(1-m)x_k\\
[x_1]\{\alpha_1\}[x_2]\cdots [x_k]\{\alpha_k\}
&\mapsto x_1\alpha_1(1-m)x_2\cdots x_k\alpha_k(1-m) \\
[x_1]\{\alpha_1\}[x_2]\cdots [x_k]
&\mapsto x_1\alpha_1(1-m)x_2\cdots x_k;
\end{align*}
that is, replace each term $\{\alpha\}$ in curly braces by
$\alpha(1-m)$ unless $\{\alpha\}$ is at the end of a word, in which
case replace it by $\alpha$. It is easily checked that this map
preserves the string relations on $S$, and so it gives a well-defined
map $\phi :\thinspace S \to R$. Restricted to generators of the form
$\{\alpha\}$, $\{1\}[x]\{1\}$, $\{1\}[x]$, $[x]\{1\}$, $\phi$ is as
given in the statement of the proposition; note now by
Proposition~\ref{prop:generated} that these suffice to determine $\phi$.

We next check bijectivity.
The maps $\phi$ are clearly surjective. It is proved in
\cite[Proposition~2.20]{CELN} that $\phi$ on $R_{KK}$ is
injective as long as $K$ is knotted. The
fact that $\phi$ is injective on $R_{Kp}$ is trivial:
by Proposition~\ref{prop:generated}, any element of $R_{Kp}$ can be
written as $\{1\}[x]$ for some $x\in R$, and then $\phi(\{1\}[x])=0$
implies $x=0$. To prove that $\phi$ is injective on
$R_{pK}$, note that if $\phi([x]\{1\})=0$, then $x(1-m)=0$ in
$R$; then since knot groups are left orderable, $R=\Z\pi$ has no zero
divisors \cite{Higman}, and so $x=0$.

Finally, the fact that $\phi$ respects multiplication and $\mu$
follows readily from the definition of $\phi$: for example,
\begin{align*}
\phi(\mu(\{\alpha_0\} \cdots \{\alpha_1\}[x_1],
[x_2] \{\alpha_2\} \cdots \{\alpha_3\}))
&=
\phi(\{\alpha_0\} \cdots \{\alpha_1\}[x_1x_2] \{\alpha_2\} \cdots
  \{\alpha_3\}) \\
&= \alpha_0(1-m)\cdots \alpha_1(1-m)x_1x_2\alpha_2(1-m)\cdots \alpha_3 \\
&= \phi(\{\alpha_0\} \cdots \{\alpha_1\}[x_1])
\cdot \phi([x_2] \{\alpha_2\} \cdots \{\alpha_3\}).
\end{align*}
\end{proof}

\begin{remark}\label{r:geometriciso}
In Section \ref{ssec:direct} we give an interpretation of the maps in Proposition \ref{prop:triple-isom} in terms of moduli spaces of holomorphic disks inducing maps from partially wrapped Floer cohomology into chains on spaces of paths and loops in $\R^{3}\setminus K$. 	
\end{remark}

Recall that the product $\mu$ gives a ring structure on $R_{pp} =
R_{pK} \otimes R_{Kp}$. Similarly to
Proposition~\ref{prop:triple-isom}, we then have the following.

\begin{proposition}
The isomorphisms $\phi :\thinspace R_{pK} \stackrel{\cong}{\to} R(1-m)$ and
$R_{Kp} \stackrel{\cong}{\to} R$ induce a ring isomorphism
\label{prop:twosided}
\[
\phi :\thinspace R_{pp} \stackrel{\cong}{\to} R(1-m)R
\]
where $R(1-m)R$ denotes the two-sided ideal of $R$ generated by $1-m$.
\end{proposition}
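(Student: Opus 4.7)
The plan is to define $\phi\colon R_{pp}\to R(1-m)R$ from the bilinear pairing $(a,b)\mapsto \phi(a)\phi(b)$ on $R_{pK}\times R_{Kp}$, verify well-definedness, check the ring homomorphism property, and then prove bijectivity. The whole set-up is essentially forced by Proposition~\ref{prop:triple-isom}; the content is in the injectivity step.

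First I would check that the map descends to the tensor product $R_{pK}\otimes_{R_{KK}} R_{Kp}$. By Proposition~\ref{prop:triple-isom} the isomorphism $\phi$ identifies $R_{KK}$ with the subring $A:=\hat R+R(1-m)\subset R$ (assuming $K$ is not the unknot) and carries the $R_{KK}$-bimodule structures on $R_{pK}$ and $R_{Kp}$ to multiplication in $R$. Therefore for every $r\in R_{KK}$,
\[
\phi(a\cdot r)\,\phi(b)=\phi(a)\,\phi(r)\,\phi(b)=\phi(a)\,\phi(r\cdot b),
\]
so the tensor-balancing relation over $R_{KK}$ is respected. Since $\phi(R_{pK})=R(1-m)$ and $\phi(R_{Kp})=R$, the image of the induced map $\phi\colon R_{pp}\to R$ lies in $R(1-m)\cdot R=R(1-m)R$. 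Surjectivity onto $R(1-m)R$ is immediate from the identity $\phi([x]\{1\}\otimes\{1\}[y])=x(1-m)y$ for $x,y\in\pi$.

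Next I would verify that $\phi$ is a ring homomorphism. By the construction of the product on $R_{pp}$ at the end of Section~\ref{ssec:enhanced},
\[
(a_1\otimes b_1)\cdot(a_2\otimes b_2)=a_1\cdot\mu(b_1\otimes a_2)\otimes b_2
\]
after absorbing the $R_{KK}$-valued factor $\mu(b_1\otimes a_2)$ across the tensor. Proposition~\ref{prop:triple-isom} states that $\phi$ sends $\mu$ to the multiplication in $R$, so applying $\phi$ yields $\phi(a_1)\phi(b_1)\phi(a_2)\phi(b_2)=\phi(a_1\otimes b_1)\cdot\phi(a_2\otimes b_2)$, as required.

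The hard part will be injectivity. Through the identifications of Proposition~\ref{prop:triple-isom} the claim becomes that the multiplication map
\[
R(1-m)\otimes_{A} R\longrightarrow R(1-m)R
\]
is an isomorphism of $R$-bimodules. I would prove this by exploiting two structural facts about $R=\Z\pi$: knot groups are left orderable (Howie--Short), so $R$ has no zero divisors (Higman); and $R$ is a free right $\hat R$-module with basis given by any set of right coset representatives $\{g_c\}$ of $\hat\pi$ in $\pi$. The second fact gives the compatible decomposition $A=\hat R\oplus\bigoplus_{c\ne 1}g_c(1-m)\hat R$ as a right $\hat R$-module, so that every element of $R(1-m)\otimes_A R$ can, using the $A$-tensor relations together with $1-m,m\in A$, be put into a normal form $\sum_c g_c(1-m)\otimes r_c$ indexed by the nontrivial cosets with $r_c\in R$. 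Multiplication sends this to $\sum_c g_c(1-m)r_c\in R$, and since $(1-m)$ is a non-zero-divisor and the $g_c$ are $\hat R$-free in $R$, this sum vanishes only if every $r_c=0$, giving injectivity. Assembling the four steps proves the ring isomorphism.
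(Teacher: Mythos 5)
Your overall architecture (well-definedness over the tensor product, surjectivity, compatibility with the product induced by $\mu$, then injectivity) is sound, and the first three steps are correct and essentially routine given Proposition~\ref{prop:triple-isom}. The gap is in the injectivity step, which is indeed where all the content lies. After reducing to the normal form $\sum_c g_c(1-m)\otimes r_c$ (which, incidentally, must include the trivial coset: the term $(1-m)\otimes r_1$ cannot be eliminated, since $1\notin R(1-m)$), you claim that $\sum_c g_c(1-m)r_c=0$ forces every $r_c=0$ because $(1-m)$ is a non-zero-divisor and the $g_c$ are a free right $\hat R$-basis of $R$. But freeness over $\hat R$ only controls relations $\sum_c g_c h_c=0$ with $h_c\in\hat R$, whereas your coefficients $(1-m)r_c$ are arbitrary elements of $R$; writing $r_c=\sum_{c'}g_{c'}k_{c,c'}$ with $k_{c,c'}\in\hat R$, the terms $g_c(1-m)g_{c'}k_{c,c'}=(g_cg_{c'}-g_cmg_{c'})k_{c,c'}$ scatter across many cosets and mix between different $c$. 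What you actually need is that the right ideals $g_c(1-m)R=(1-g_cmg_c^{-1})\,g_cR$, indexed by the cosets of $\hat\pi$, form a direct sum inside $R$ --- an independence statement about conjugates of the meridian in $\Z[\pi]$ that is not obvious (it can be verified by Fox-calculus arguments in a free group ring, but a knot group ring is not free) and is nowhere justified in your write-up.

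The paper sidesteps this entirely by working in the concrete string model $S$ rather than with the abstract tensor product $R(1-m)\otimes_A R$: the string relation gives $[x_1]\{1\}[x_2]=[x_1(1-m)x_2]$ (extended linearly), so every element of $R_{pp}$ can be written as a single bracket $[x]$ with $x\in R(1-m)R$, and $\phi([x])=x$. Thus $x\mapsto [x]$ and $\phi$ are visibly mutually inverse between $R(1-m)R$ and $R_{pp}$, and injectivity comes for free from the same well-definedness check already done in Proposition~\ref{prop:triple-isom}, with no module-theoretic independence claim required. To repair your argument you would either need to prove the independence of the ideals $g_c(1-m)R$ directly, or retreat to the paper's normal form in $S$.
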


\begin{proof}
Same as the proof of Proposition~\ref{prop:triple-isom}, but now use
the fact that $\phi$ is defined on $R_{pp}$ by
\[
[x_1]\{\alpha_1\}[x_2]\cdots \{\alpha_{k-1}\}[x_k]
\mapsto x_1\alpha_1(1-m)x_2 \cdots \alpha_{k-1}(1-m)x_k
\]
and that any element of $R_{pp}$ can be written as $[x]$ for some
$x\in R$.
\end{proof}

Although $R(1-m)R$ is not all of $R$, we observe the following.

\begin{proposition}
The submodules $\Z = \Z \cdot 1$ and $R(1-m)R$ of $R$ are
complementary:
\label{prop:complementary}
$R \cong
\Z \oplus R(1-m)R$ as $\Z$-modules.
\end{proposition}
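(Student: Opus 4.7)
My plan is to identify $R(1-m)R$ as the kernel of a natural augmentation-type map $R\to \Z$ and then produce an obvious splitting.

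Consider the ring homomorphism $\epsilon_m\colon R=\Z[\pi]\to \Z[\pi/\langle\langle m\rangle\rangle]$ induced by the quotient of $\pi=\pi_1(\R^3\setminus K)$ by the normal closure of the meridian $m$. The kernel of $\epsilon_m$ is precisely the two-sided ideal generated by $1-m$, namely $R(1-m)R$: on the one hand $R(1-m)R\subset \ker\epsilon_m$ since $\epsilon_m(1-m)=0$, and on the other hand any element sent to $0$ can be rewritten modulo $R(1-m)R$ using the relation $m=1$, giving the reverse inclusion.

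Next, I would invoke the standard fact that the meridian normally generates the knot group: this is visible directly from the Wirtinger presentation (every generator is conjugate to $m$), or geometrically from the fact that $\R^3$ is obtained from $\R^3\setminus K$ by Dehn filling along the meridian, so $\pi_1(\R^3)=1$ is $\pi/\langle\langle m\rangle\rangle$. Hence $\pi/\langle\langle m\rangle\rangle$ is the trivial group and $\Z[\pi/\langle\langle m\rangle\rangle]\cong \Z$, giving a short exact sequence of $\Z$-modules
\[
0\longrightarrow R(1-m)R\longrightarrow R\stackrel{\epsilon_m}{\longrightarrow}\Z\longrightarrow 0.
\]

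Finally, the sequence splits canonically by the unit inclusion $\iota\colon \Z\hookrightarrow R$, $1\mapsto 1\in R$, since $\epsilon_m\circ\iota=\mathrm{id}_\Z$. This yields the desired $\Z$-module decomposition $R\cong \Z\oplus R(1-m)R$. No step is really an obstacle: the only nontrivial input is the topological fact that $m$ normally generates $\pi_1(\R^3\setminus K)$, which is classical.
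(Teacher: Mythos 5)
Your proof is correct and follows essentially the same route as the paper's: both arguments rest on the augmentation $R \to \Z$ (which is your $\epsilon_m$, since $\pi/\langle\langle m\rangle\rangle$ is trivial) together with the fact that the meridian normally generates the knot group, the paper using the identity $\gamma m\gamma^{-1} = 1 - \gamma(1-m)\gamma^{-1}$ to see directly that $\Z + R(1-m)R = R$ where you package the same observation as the identification $\ker\epsilon_m = R(1-m)R$. The split-exact-sequence phrasing is a cosmetic difference only.
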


\begin{proof}
The homomorphism from $\pi$ to the trivial group induces a map $R =
\Z[\pi] \to \Z$, which restricts to the identity on $\Z$ and to the
zero map on $R(1-m)R$; thus $\Z \cap R(1-m)R = 0$.
On the other hand,
$\pi = \pi_1(\R^3\setminus K)$ is generated by a finite collection of
meridians, each of which is an element of $\pi$ of the form $\gamma m
\gamma^{-1}$ for some $\gamma \in \pi$, and $\gamma m \gamma^{-1} =
1-\gamma(1-m)\gamma^{-1} \in \Z + R(1-m)R$; thus $\Z + R(1-m)R = R$.
\end{proof}

We can restate the combination of Propositions~\ref{prop:twosided}
and~\ref{prop:complementary} as follows. Consider the direct sum
$\Z \oplus R_{pp}$, and give this a ring structure by defining the
generator $1$ of $\Z$ to be the multiplicative identity and setting
multiplication on the factor $R_{pp}$ to be as usual. That is:
\[
(n_1,r_1) \cdot (n_2,r_2) = (n_1n_2,n_1r_2+n_2r_1+r_1r_2).
\]
Then:

\begin{proposition}
We have a ring isomorphism
\label{prop:ring-isom}
$\phi :\thinspace \Z \oplus R_{pp}
\stackrel{\cong}{\to} R$ defined by
\[
\phi(n,[x_1]\{\alpha_1\}[x_2]) = n+x_1\alpha_1(1-m)x_2.
\]
\end{proposition}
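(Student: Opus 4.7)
The plan is to deduce Proposition~\ref{prop:ring-isom} directly from the two immediately preceding propositions, which have already done all the substantive work; what remains is bookkeeping on the ring structure.

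First, I would observe that Proposition~\ref{prop:complementary} gives a $\Z$-module direct sum decomposition $R = \Z \cdot 1 \oplus R(1-m)R$, while Proposition~\ref{prop:twosided} provides a ring isomorphism $\phi\colon R_{pp} \stackrel{\cong}{\to} R(1-m)R$. Defining $\phi(n,r) := n\cdot 1 + \phi(r)$ and unpacking what Proposition~\ref{prop:twosided} says on generators $[x_1]\{\alpha_1\}[x_2]$ recovers the formula in the statement. Bijectivity as $\Z$-modules is then immediate: on the first summand $\phi$ restricts to the canonical inclusion $\Z \hookrightarrow R$, and on the second summand it is the isomorphism of Proposition~\ref{prop:twosided}, and these are complementary by Proposition~\ref{prop:complementary}.

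Second, I would verify that $\phi$ is a ring homomorphism. Note that $R(1-m)R$ is a two-sided ideal of $R$ by construction, so multiplication in $R$ under the splitting $R = \Z \cdot 1 \oplus R(1-m)R$ takes the form
\[
(n_1 + s_1)(n_2 + s_2) = n_1 n_2 + (n_1 s_2 + n_2 s_1 + s_1 s_2),
\]
with the first term in $\Z \cdot 1$ and the parenthesized term in $R(1-m)R$. Setting $s_i = \phi(r_i)$ and applying the ring homomorphism property of $\phi$ on $R_{pp}$ (so $\phi(r_1 r_2) = \phi(r_1)\phi(r_2)$) along with $\Z$-linearity (so $\phi(n_1 r_2) = n_1 \phi(r_2)$, etc.), one sees that this matches precisely the multiplication $(n_1,r_1)\cdot(n_2,r_2) = (n_1 n_2,\, n_1 r_2 + n_2 r_1 + r_1 r_2)$ imposed on $\Z \oplus R_{pp}$. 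The unit condition $\phi(1,0) = 1$ is built into the definition.

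There is no real obstacle here; the content of the proposition is entirely concentrated in Propositions~\ref{prop:twosided} and~\ref{prop:complementary}, whose proofs rely respectively on the explicit description of $\phi$ via the string relations and on the fact that $\pi_1(\R^3 \setminus K)$ is normally generated by meridians. The only thing that requires any care is the observation that the somewhat ad hoc multiplication formula defined on $\Z \oplus R_{pp}$ is precisely the one forced by viewing $\Z \oplus R_{pp}$ as the pullback of the ideal/subring decomposition of $R$.
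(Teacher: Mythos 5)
Your proposal is correct and is essentially the paper's own argument: the paper introduces the proposition explicitly as a restatement of the combination of Propositions~\ref{prop:twosided} and~\ref{prop:complementary}, with the multiplication on $\Z \oplus R_{pp}$ defined precisely so that the ideal/complement decomposition of $R$ pulls back to it. You have simply written out the routine verification that the paper leaves implicit.
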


\begin{remark}
The ring $\Z \oplus R_{pp}$ may seem like an odd candidate to be isomorphic to $R = \Z[\pi_1(\R^3\setminus K)]$. In fact, it is the correct object to consider from at least two perspectives. One is through the cord algebra and modules: we have
$R_{pp} \cong \Cord_{pK} \otimes \Cord_{Kp}$, and we can use the skein relation for cords (Definition~\ref{def:cordalg} \eqref{it:cord3}) to rewrite the product of a $pK$ cord with a $Kp$ cord as the difference of two $pp$ cords. This gives a map
\[
\Cord_{pK} \otimes \Cord_{Kp} \to \Cord_{pp}.
\]
This map is not surjective, since it maps to the ideal generated by differences of cords, but it becomes an isomorphism if we add the $\Z$-module generated by the trivial $pp$ cord to the left hand side to obtain $\Z \oplus R_{pp}$. On the other hand, any $pp$ cord is a loop in $\R^3 \setminus K$, and this induces an isomorphism between $\Cord_{pp}$ and $\Z[\pi_1(\R^3\setminus K)]$.

The other perspective is through partially wrapped Floer cohomology; see Section~\ref{ssec:wrapped-WK}.
\end{remark}


\section{Proof of Theorem~\ref{thm:main}}
\label{sec:mainpf}

Here we present the proof of our main result. Briefly, by Proposition~\ref{prop:ring-isom},  we can recover $R = \Z[\pi_1(\R^3\setminus K)]$ from
the ring structure on $R_{pp}$, which itself is determined by the
Legendrian contact homology of $\Lambda_K \cup \Lambda_p$ along with
the product $\mu$. This allows us to recover the knot $K$ itself. The details of the proof are broken into two subsections corresponding to statements \eqref{it:main1} and \eqref{it:main2} from Theorem~\ref{thm:main}, and at the end of this section we make some remarks about the difference between the two.

\subsection{Proof of Theorem~\ref{thm:main} (\ref{it:main2})}
\label{ssec:mainpf1}

Suppose that $K_0,K_1$ are knots such that there are isomorphisms
between the KCH-triples of $K_0$ and $K_1$; these
isomorphisms are compatible with the
products $\mu :\thinspace R_{K_ip} \otimes R_{pK_i} \to R_{K_iK_i}$;
and the isomorphism $R_{K_0K_0} \stackrel{\cong}{\to} R_{K_1K_1}$
sends the meridian and longitude $m_0,l_0$ of $K_0$ to the meridian
and longitude $m_1,l_1$ of $K_1$, respectively.
Under these assumptions, we want to conclude that $K_0,K_1$ are
isotopic in $\R^3$ as oriented knots.
Since $R_{KK}$ detects the unknot (see
e.g. \cite[Corollary~1.5]{CELN}), we will assume that $K_0,K_1$ are
both knotted.

For $i=0,1$, write $R_i = \Z[\pi_1(\R^3\setminus K_i)]$ and $\hat R_i
= \Z[m_i^{\pm 1},l_i^{\pm 1}] \subset R_i$.
By Proposition~\ref{prop:triple-isom}, we can write the isomorphisms
between the KCH-triples $(R_{K_0K_0},R_{K_0p},R_{pK_0})$ and
$(R_{K_1K_1},R_{K_1p},R_{pK_1})$ as a triple of maps
\[
(\psi_{KK},\psi_{Kp},\psi_{pK}) :\thinspace
(\hat R_0+R_0(1-m_0),R_0,R_0(1-m_0)) \stackrel{\cong}{\to}
(\hat R_1+R_1(1-m_1),R_1,R_1(1-m_1)).
\]
These maps are compatible with multiplication and the product $\mu$ in
the way described in Proposition~\ref{prop:triple-isom}.

Write $R_{pp}^i = R_{pK_i} \otimes_{R_{K_iK_i}} R_{K_ip}$; then the
map $\psi_{pK} \otimes \psi_{Kp}$ is an isomorphism from $R_{pp}^0$ to
$R_{pp}^1$, and it preserves the ring structure on $R_{pp}^i$
determined by $\mu$. By Proposition~\ref{prop:ring-isom}, we can then
view
\[
\psi_{pp} := \text{id} \oplus (\psi_{pK}
\otimes \psi_{Kp}) :\thinspace \Z \oplus R_{pp}^0 \stackrel{\cong}{\to} \Z \oplus
R_{pp}^1
\]
as a ring isomorphism $\psi_{pp} :\thinspace R_0 \stackrel{\cong}{\to} R_1$.

Now $R_i = \Z[\pi_1(\R^3\setminus K_i)]$ and knot groups are
left-orderable, and so any isomorphism between $R_0$ and $R_1$ must
come from a group isomorphism between the knot groups \cite{Higman}.
More precisely, the set of units in the group ring $\Z[G]$ of a
left-orderable group $G$ are exactly the
elements $\pm g$ for $g\in G$, and so there exists an isomorphism
\[
\psi :\thinspace \pi_1(\R^3\setminus K_0) \stackrel{\cong}{\to}
\pi_1(\R^3\setminus K_1)
\]
such that $\psi_{pp}(\gamma) = \pm \psi(\gamma)$ for all $\gamma\in
\pi_1(\R^3\setminus K_0)$.

Now that we know that the knot groups of $K_0$ and $K_1$ are
isomorphic, it remains to show that the isomorphism preserves the
peripheral structure. We will show that there is some $\gamma \in
\pi_1(\R^3\setminus K_1)$ such that $\psi(m_0) = \gamma^{-1} m_1
\gamma$ and $\psi(l_0) = \gamma^{-1} l_1 \gamma$, whence the
composition of $\psi$ and conjugation by $\gamma$ gives an isomorphism
$\pi_1(\R^3\setminus K_0)
\stackrel{\cong}{\to} \pi_1(\R^3\setminus K_1)$ sending $m_0,l_0$ to
$m_1,l_1$ as desired.

For this, we use the assumption that
$\psi_{KK}(m_0) = m_1$ and $\psi_{KK}(l_0) = l_1$. The elements
$1 \in R_0 \cong R_{K_0p}$ and $1-m_0 \in R_0(1-m_0) \cong R_{pK_0}$
have images under $\psi_{Kp}$ and $\psi_{pK}$
\begin{align*}
\psi_{Kp}(1) &= x & \psi_{pK}(1-m_0) &= x'(1-m_1)
\end{align*}
for some $x,x' \in R_1$. In $R_1$, we have
\begin{align*}
1-m_1 &= \psi_{KK}(1-m_0) = \psi_{KK}\mu(1,1-m_0)\\
&=\mu(\psi_{Kp}(1),\psi_{pK}(1-m_0)) =\mu(x,x'(1-m_1))
= xx'(1-m_1).
\end{align*}
Now the group ring of a left-orderable group has no zero divisors, and
so it follows that $xx'=1\in R_1$ and hence there exists some $\gamma
\in \pi_1(\R^3\setminus K_1)$ such that $x=\pm \gamma$, $x'=\pm
\gamma^{-1}$.

Next, for $\alpha\in\Z$, view $l_0^\alpha
(1-m_0) = (1-m_0) \cdot l_0^\alpha \cdot 1$ as an element of
$R_{pK_0} \otimes_{R_{K_0K_0}} R_{K_0K_0} \otimes_{R_{K_0K_0}}
R_{K_0p} = R_{pp}^0$. Then we have
\[
\psi_{pp}(l_0^\alpha
(1-m_0)) = \psi_{pK}(1-m_0) \cdot \psi_{KK}(l_0^\alpha) \cdot
\psi_{Kp}(1) = \gamma^{-1}(1-m_1) \cdot l_1^\alpha \cdot
\gamma.
\]
Now since $\psi_{pp} = \pm \psi$ on elements of $\pi_1(\R^3\setminus
K_0)$, both of $\psi_{pp}(l_0^\alpha(1-m_0))$ and $\gamma^{-1}(1-m_1) \cdot l_1^\alpha \cdot
\gamma$ are
binomials (i.e., sums of the form $n_1\gamma_1+n_2\gamma_2$ with $n_i \in \Z$ and $\gamma_i \in \pi_1(\R^3\setminus K_1)$), and equating terms gives
\[
\{\psi(l_0^\alpha),\psi(m_0 l_0^\alpha)\}
= \{\gamma^{-1} l_1^\alpha\gamma,\gamma^{-1} m_1 l_1^\alpha
\gamma\}.
\]
Plugging in $\alpha=0$ and $\alpha=1$ in succession gives $\psi(m_0) =
\gamma^{-1} m_1 \gamma$ and $\psi(l_0) = \gamma^{-1} l_1 \gamma$, as desired.

\subsection{Proof of Theorem~\ref{thm:main} (\ref{it:main1})}
\label{ssec:mainpf2}

We now consider the case where there is an isomorphism
between the KCH-triples of $K_0$ and $K_1$ preserving the product
$\mu$, but without the assumption that longitude and
meridian classes are mapped to themselves. We will prove that there is
an isomorphism $\pi_1(\R^3\setminus K_0) \cong \pi_1(\R^3\setminus
K_1)$ sending $m_0$ to $m_1^{\pm 1}$ and $l_0$ to $l_1^{\pm 1}$. It
then follows from Waldhausen \cite{Wal} that $K_0$ is smoothly
isotopic to either
$K_1$ or the mirror of $K_1$, as unoriented knots. (Note that our
argument identifies not only the peripheral subgroups but also the
meridians in each, and so we do not need to appeal to Gordon and
Luecke \cite{GL}.)
As in the previous proof, we can assume that $K_0$ and $K_1$ are both knotted.

The setup is
as in Section~\ref{ssec:mainpf1}, except that
$\psi_{KK}(m_0) = m_1^{n_1}l_1^{n_2}$ and $\psi_{KK}(l_0) =
m_1^{n_3}l_1^{n_4}$ for some $\left( \begin{smallmatrix} n_1 & n_2 \\
    n_3 & n_4 \end{smallmatrix} \right) \in GL_2(\Z)$ not necessarily
the identity matrix.
As in the previous proof, $\psi_{pp}$
is induced by a group
isomorphism $\psi :\thinspace \pi_1(\R^3\setminus K_0) \to
\pi_1(\R^3\setminus K_1)$.
Now however when we write $\psi_{Kp}(1)=x$, $\psi_{pK}(1-m_0) =
x'(1-m_1)$ for $x,x'\in R_1$, we have
\begin{equation}
xx'(1-m_1) = 1-m_1^{n_1}l_1^{n_2}.
\label{eq:left}
\end{equation}
We can then appeal to the following algebraic result.

\begin{lemma}
Suppose $G$ is left orderable and $m,g\in G$, $z \in \Z[G]$ satisfy
$z(1-m) = 1-g$ in $\Z[G]$.
\label{lma:left}
Then $g=m^n$ for some $n\in\Z$.
\end{lemma}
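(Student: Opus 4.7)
The plan is to expand $z(1-m)=1-g$ in the group-element basis of $\Z[G]$, read off constraints on the coefficients of $z$, and then use the finite support of $z$ to force $g$ into $\langle m\rangle$. First I would dispose of the degenerate cases: if $m=e$ then $1-g=0$ and $g=m^{0}$; if $g=e$ there is nothing to prove. Otherwise, since $G$ is left orderable it is torsion free, so $\langle m\rangle$ is infinite cyclic.

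Writing $z=\sum_{h}a_{h}h$ with $a_{h}\in\Z$ finitely supported, the product rearranges as
\[
z(1-m)=\sum_{h}(a_{h}-a_{hm^{-1}})\,h.
\]
Comparing with $1-g$ term by term gives the three families
\[
a_{e}-a_{m^{-1}}=1,\qquad a_{g}-a_{gm^{-1}}=-1,\qquad a_{h}=a_{hm^{-1}}\ \text{for } h\notin\{e,g\}.
\]
In particular, along each coset $h\langle m\rangle$ that avoids $\{e,g\}$, the function $k\mapsto a_{hm^{k}}$ is constant. The main step is then a contradiction: suppose $g\notin\langle m\rangle$, so $g$ lies in a different $\langle m\rangle$-orbit than $e$. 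Along the orbit $\langle m\rangle$ of $e$ the relation $a_{m^{k}}=a_{m^{k-1}}$ then holds for every $k\neq 0$, making $a_{m^{k}}$ constant for $k\geq 0$ and constant for $k\leq -1$. Finite support of $z$ forces both constants to vanish, so $a_{e}=a_{m^{-1}}=0$, contradicting $a_{e}-a_{m^{-1}}=1$. Hence $g\in\langle m\rangle$, i.e.\ $g=m^{n}$ for some $n\in\Z$.

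There is no serious technical obstacle here — the argument is coefficient bookkeeping on cosets of $\langle m\rangle$. The only point where the hypothesis genuinely enters is torsion freeness of $G$ (a consequence of left orderability); without it, $\langle m\rangle$ could be finite and the step ``constant along an orbit plus finite support implies zero'' would fail, collapsing the contradiction.
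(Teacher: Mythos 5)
Your proof is correct, but it takes a genuinely different route from the paper's. The paper uses the left-invariant order directly: writing $z=\sum a_i g_i$ with $g_1<\cdots<g_k$ and (after arranging $m>1$) noting that $g_1$ is strictly minimal and some $g_jm$ strictly maximal in the combined support of $z$ and $zm$, it concludes these two terms survive in $z(1-m)$ and hence equal $\{1,g\}$, then traces the chain of cancellations $g_1m=g_{i_1}$, $g_{i_1}m=g_{i_2},\dots$ to land at $g_j$, forcing $g$ into $\langle m\rangle$. You instead compare coefficients along the $\langle m\rangle$-orbit of the identity: the identity $z(1-m)=\sum_h(a_h-a_{hm^{-1}})h$ shows the coefficient function is constant on each tail $\{m^k\}_{k\ge 0}$ and $\{m^k\}_{k\le -1}$ once $g\notin\langle m\rangle$, and finite support then kills both constants, contradicting $a_e-a_{m^{-1}}=1$. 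Your argument is the more elementary and more general of the two: it invokes left-orderability only through torsion-freeness, so it proves the statement for any torsion-free $G$, whereas the paper's argument leans on the order itself (which it has available anyway, since left-orderability is also used elsewhere, e.g.\ to rule out zero divisors). One small quibble with your closing remark: torsion-freeness is not actually essential either, since if $m$ had finite order $n$ the same telescoping around the finite orbit would give $a_{m^{-1}}=a_{m^{n-1}}=\cdots=a_e$ directly, again contradicting $a_e-a_{m^{-1}}=1$; so the coset bookkeeping in fact proves the lemma for arbitrary groups. This does not affect the validity of your proof as written.
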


\begin{proof}
Let $<$ be the left-invariant ordering on $G$. Without loss of
generality, we may assume $m>1$; the lemma is trivial if $m=1$, and we
can replace $m$ by $m^{-1}$ if $m<1$.
Write $z = \sum_{i=1}^k a_i g_i$ for $a_i \in \Z$ and $g_i \in G$; we
may assume that $a_i \neq 0$ for all $i$ and $g_1 < g_2 < \cdots <
g_k$. Then $g_i < g_i m$ for all $i$, and so among the subset $S =
\{g_1,\ldots,g_k,g_1m,\ldots,g_km\}$ of $G$ (which may
contain repeated elements), $g_1$ is strictly
lowest. Furthermore, let $g_jm$ be the largest among the $k$ distinct
elements $g_1m,\ldots,g_km$; then $g_jm$ is strictly largest among the
elements of $S$. It follows that the expansion of $z(1-m) =
\sum_{i=1}^k (a_i g_i - a_i g_i m)$ involves $g_1$ and $g_j m$ at
least. Since $1-g$ is a binomial, it follows that $\{g_1,g_jm\} =
\{1,g\}$, and furthermore that all terms in the expansion not
involving  $g_1$ or $g_j m$ must cancel. Thus $g_1m$ must be canceled
by $g_{i_1}$ for some $i_1$, whence $g_{i_1}m$ must be canceled by
$g_{i_2}$ for some $i_2$, and so forth. We conclude that there is a
sequence $i_0=1,i_1,\ldots,i_\ell=j$ such that $g_{i_r}=g_{i_{r-1}}m$
for all $r=1,\ldots,\ell$, and so $g_j = g_1 m^{r-1}$. The lemma follows.
\end{proof}

We now continue with the proof of Theorem~\ref{thm:main} (\ref{it:main1}).
By Equation~\eqref{eq:left} and Lemma~\ref{lma:left},
$m_1^{n_1}l_1^{n_2} = m_1^n$ for some $n$. By the Loop Theorem,
since $K_2$ is knotted, $l_1$ is not a power of $m_1$, and so
$n_2=0$. Since $\left( \begin{smallmatrix} n_1 & n_2 \\
    n_3 & n_4 \end{smallmatrix} \right)$ is invertible, it follows
that $n_1 = \pm 1$ and $n_4 = \pm 1$.

We treat the cases $n_1=1$ and $n_1=-1$ separately. If $n_1=1$, then
$\psi_{KK}(m_0) = m_1$ and $\psi_{KK}(l_0) = m_1^{n_3}l_1^{\pm 1}$.
As in Section~\ref{ssec:mainpf1}, we have $xx'(1-m_1) = 1-m_1$ and
so $x = \pm \gamma$, $x' = \pm \gamma^{-1}$ for some $\gamma \in
\pi_1(\R^3\setminus K_1)$. Now for any $\alpha\in\Z$, we compute
\[
\psi_{pp}(l_0^\alpha(1-m_0)) =
\psi_{pK}(1-m_0) \cdot \psi_{KK}(l_0^\alpha)\cdot
\psi_{Kp}(1)
= \gamma^{-1}m_1^{\alpha n_3} l_1^{\pm \alpha} (1-m_1) \gamma.
\]
Identifying terms as in Section~\ref{ssec:mainpf1} gives
\[
\{\psi(l_0)^\alpha,\psi(m_0)\psi(l_0)^\alpha\} =
\{\gamma^{-1} m_1^{\alpha n_3}l_1^{\pm \alpha} \gamma,
\gamma^{-1} m_1^{\alpha n_3+1}l_1^{\pm \alpha} \gamma\}.
\]
Plugging in $\alpha=0$ and $\alpha=1$ in succession gives
$\psi(m_0) = \gamma^{-1} m_1 \gamma$ and $\psi(l_0) = \gamma^{-1}
m_1^{n_3}l_1^{\pm 1}\gamma$. Conjugating $\psi$ by $\gamma$ gives a
group isomorphism $\pi_1(\R^3\setminus K_1) \stackrel{\cong}{\to}
\pi_1(\R^3\setminus K_2)$ sending $m_0$ to $m_1$ and $l_0$ to
$m_1^{n_3}l_1^{\pm 1}$. Since the longitude is the identity in (and
the meridian generates) the
abelianization of $\pi_1$, we must have $n_3=0$: thus $m_0$ is sent to
$m_1$ and $l_0$ to $l_1^{\pm 1}$.

If instead $n_1=-1$, we can run the same argument to conclude $x =
\pm \gamma$, $x' = \mp \gamma^{-1}m_1^{-1}$;
$\psi_{pp}(l_0^\alpha(1-m_0)) =
-\gamma^{-1}m_1^{-1}(1-m_1)m_1^{\alpha n_3}l_1^{\pm\alpha}\gamma$;
$\psi(m_0) = \gamma^{-1} m_1^{-1}\gamma$ and
$\psi(l_0) = \gamma^{-1} m_1^{n_3}l_1^{\pm 1} \gamma$;
and finally there is an isomorphism
$\pi_1(\R^3\setminus K_0) \stackrel{\cong}{\to}
\pi_1(\R^3\setminus K_1)$ sending $m_0$ to $m_1^{-1}$ and $l_0$ to
$l_1^{\pm 1}$.

This completes the proof of Theorem~\ref{thm:main} (\ref{it:main1}).

\subsection{A note on different types of Legendrian isotopy}

The difference between statements \eqref{it:main1} and \eqref{it:main2} in Theorem~\ref{thm:main} is in the strength of the assumption about the Legendrian isotopy relating two conormal tori $\Lambda_{K_0}$ and $\Lambda_{K_1}$. One might ask if the weaker assumption---an unparametrized Legendrian isotopy between $\Lambda_{K_0}$ and $\Lambda_{K_1}$---might still imply the stronger result---a smooth isotopy between $K_0$ and $K_1$ as oriented knots. This appears to be possible, but our invariants do not show this.

The issue is a symmetry of $ST^*\R^3$: the diffeomorphism of $\R^3$ given by $(x,y,z) \mapsto (x,y,-z)$ induces a coorientation-preserving contactomorphism of $ST^*\R^3$ that preserves cotangent fibers and sends the conormal torus of $K$ to the
conormal torus of the mirror $m(K)$ of $K$. It follows that there is an isomorphism
\[
\LCH_*(\Lambda_p \cup \Lambda_K) \cong \LCH_*(\Lambda_p \cup
\Lambda_{m(K)}),
\]
and indeed between the DGAs for $\Lambda_p \cup \Lambda_K$ and $\Lambda_p \cup \Lambda_{m(K)}$. Further, this isomorphism preserves the product $\mu$. (On the homology of the conormal torus, the mirroring map preserves $l$ but sends $m$ to $m^{-1}$, and so this symmetry does not contradict Theorem~\ref{thm:main} \eqref{it:main2}.)

There is a result that is somewhere in between statements \eqref{it:cor1} and \eqref{it:cor2} from
Theorem~\ref{thm:complete}: if $\Lambda_{K_0},\Lambda_{K_1}$ are
Legendrian isotopic in an orientation-preserving manner, then
$K_0,K_1$ are smoothly isotopic as unoriented knots. Indeed, from the proof of Theorem~\ref{thm:main} \eqref{it:main1} in Section~\ref{ssec:mainpf2}, the Legendrian isotopy must send $(m_0,l_0)$ to $(m_1^{\pm 1},l_1^{\pm 1})$, and the orientation-preserving condition implies that the two signs agree. It follows that there is an isomorphism between $\pi_1(\R^3\setminus K_0)$ and $\pi_1(\R^3\setminus K_1)$ that preserves the peripheral subgroup, possibly after changing the orientation of $K_1$.


\section{Lagrangian Skeleta, Wrapped Floer Cohomology, and Microlocal Sheaves}
\label{sec:wrapped}

In this section we describe a geometric setup where the enhanced knot contact homology appears as the partially wrapped Floer cohomology of certain Lagrangian disks. This Floer cohomology is closely related to the microlocal sheaf approach to these questions, see e.g. \cite{Shende}. We calculate the partially wrapped Floer cohomology via a version of the Legendrian surgery isomorphism \cite{BEE}.

Let $K\subset\R^{3}$ be a knot. We will associate a Weinstein manifold $W_K$ to $K$ with Lagrangian skeleton $\R^3\cup L_K$. Here a Lagrangian skeleton of a Weinstein domain $X$ is a Lagrangian subvariety $L\subset X$ (possibly singular but looking like the symplectization of a Legendrian $\Lambda$ at infinity) such that $X$ is a regular neighborhood of $L$.
The space $W_K$ will be topologically identical to $T^*\R^3$, but the Liouville vector field on $W_K$ is somewhat different than the one on $T^*\R^3$. We use this Liouville field to define partially wrapped Floer cohomology adapted to the Legendrian $\Lambda$.

Before we describe $W_K$, we first discuss our version of partially wrapped Floer cohomology in more generality. We then explain the analogue of the construction of $W_K$ in one dimension down ($\R^2$ rather than $\R^3$) before proceeding to $W_K$ itself and calculating partially wrapped Floer cohomology in this case.
We caution that the discussion in this section lacks full details at times; rather than giving a detailed and rigorous treatment, our goal is to use wrapped Floer cohomology to provide motivation for the constructions in the previous sections and connect our argument to the sheaf-theoretic argument from \cite{Shende}.

\subsection{Partially wrapped Floer cohomology}
\label{ssec:partially-wrapped}

\begin{figure}
\labellist
\small\hair 2pt
\pinlabel $W$ at 257 34
\pinlabel ${[0,\infty) \times V}$ at 350 188
\pinlabel $V$ at 286 116
\pinlabel ${DT^*([0,\infty) \times \Lambda)}$ at 265 284
\pinlabel $V'$ at 71 242
\pinlabel $V_c'$ at 242 178
\pinlabel ${ST^*([0,\infty)\times \Lambda)}$ at 259 260
\pinlabel ${\color{red} \Lambda}$ at 163 145
\pinlabel ${\color{red} [0,\infty) \times \Lambda}$ at 68 276
\endlabellist
\centering
\includegraphics[width=0.5\textwidth]{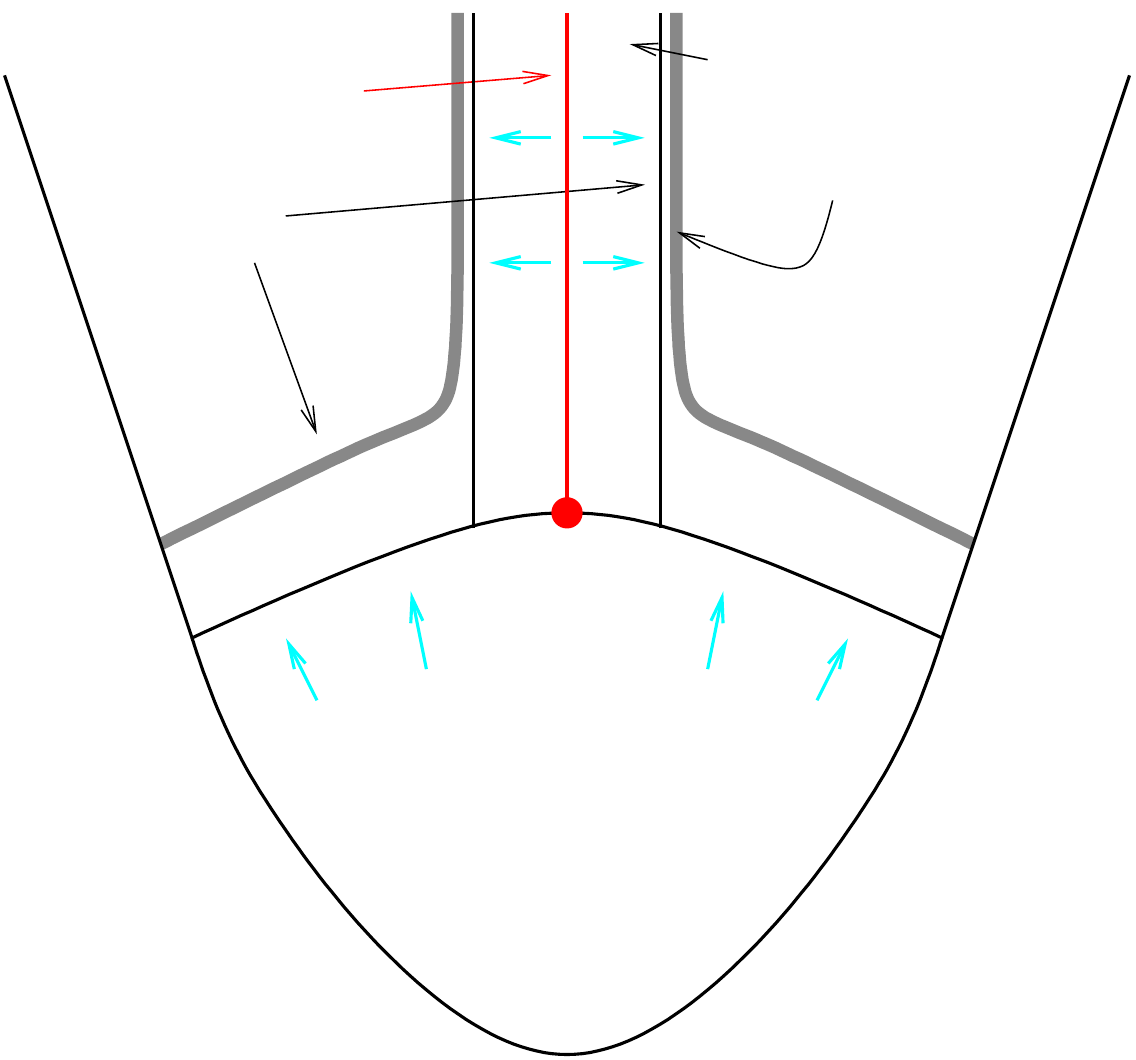}
\caption{The setup for partially wrapped Floer cohomology with wrapping stopped by $\Lambda$. The small arrows indicate the Liouville field $Z$.
}
\label{fig:wrapped1}
\end{figure}

First we give an informal description of what we call partially wrapped Floer cohomology adapted to a Legendrian; see Figure~\ref{fig:wrapped1} for an illustration.
Let $W$ be a Weinstein manifold and let $\Lambda$ be a Legendrian submanifold in the ideal contact boundary $V$ of $W$. Consider the end $\approx [0,\infty)\times V$ of $W$ and the Lagrangian cylinder  $[0,\infty)\times\Lambda\subset [0,\infty)\times V$. We will consider a Liouville vector field $Z$ on $W$ (the Liouville condition says that $\mathcal{L}_{Z}\omega=\omega$, where $\mathcal{L}$ is the Lie derivative and $\omega$ the symplectic form) adapted to $L$. The condition of being adapted to $L$ means that $Z$ agrees with the standard Liouville vector field $p\cdot\partial_{p}$ along fibers in a cotangent neighborhood of $[0,\infty)\times\Lambda$. The contact boundary $V'$ of $W$ equipped with such a Liouville vector field $Z$ is then non-compact but in a controlled way as follows. It consists of a compact piece $V'_{\mathrm{c}}$ which is a manifold with boundary and the boundary $\partial V'_{\mathrm{c}}$ equals $ST^{\ast} ([0,\infty)\times\Lambda|_{\{0\}\times\Lambda})$. The non-compact part equals
\[
ST^{\ast} ([0,\infty)\times\Lambda)\approx \partial V_{\mathrm{c}}'\times[0,\infty).
\]
The flow of the Liouville vector field $Z$ starting at $V'$ then gives an end of $W$ that looks like $[1,\infty)\times V'$. As in the definition of ordinary symplectic homology or wrapped Floer (co)homology we consider a Hamiltonian on $W$ that is approximately equal to $0$ outside the end and is a function $h(r)$ of only the first coordinate in the end $[1,\infty)\times V'$, where the first two derivatives of $h$ are non-negative and $h(r)=ar+b$ for constants $a$ and $b$ for $r>2$. We next consider Lagrangian submanifolds in $W$ that agree with cylinders over compact Legendrian submanifolds in $V'$ in the end. For such Lagrangians we define a Floer (co)homology just as wrapped Floer (co)homology is defined but using the Hamiltonians described above. We call the resulting Floer cohomology \emph{partially wrapped Floer cohomology adapted to $\Lambda$} and write $HW(L_1,L_2)$ for the partially wrapped Floer cohomology of $L_1$ and $L_2$.

\begin{remark}
One can rephrase the above construction starting from a compact Weinstein domain $\overline{W}$ with a compact Lagrangian $\overline{L}$ with Legendrian boundary $\Lambda$. Then attach a neighborhood of the zero-section in $T^{\ast}\Lambda\times[0,\infty)$ to $\partial \overline{W}$ along $\Lambda$ and use a standard interpolation between the Liouville vector fields $\overline{Z}$ pointing outwards along $\partial\overline{W}$ and standard Liouville vector field $p\cdot\partial_{p}$ along fibers in the cotangent bundle attached. This results in a non-compact Weinstein domain with non-compact contact boundary $V'$ as above. Our Weinstein manifold is then obtained by adding the positive end of the symplectization of $V'$. Note that this is very similar to Lagrangian handle attachment. We will use this in calculations below. 	
\end{remark}

\begin{remark}
In our setting the wrapping is ``stopped'' by the Legendrian $\Lambda$, in the sense that the Hamiltonian flow does not cross the cylinder over $\Lambda$. The term ``partially wrapped'' is due to Auroux \cite{Auroux1,Auroux2}, but in the setting considered there, the wrapping is stopped by a codimension-$1$ submanifold of $V$ rather than a Legendrian. The constructions there and here are of a similar spirit but we do not intend to study the exact relation here. Our notion of partially wrapped Floer cohomology aligns more closely perhaps with \cite{nadler2014fukaya} and certainly with \cite{Syl}.
\end{remark}

\begin{remark}
For a prototypical example of partial wrapping consider $\R^{2n}\approx T^{\ast}\R^{n}$ with coordinates $(q,p)\in\R^{n}\times\R^{n}$.
\label{rmk:Rn}
We equip it with the Liouville vector field $p\cdot\partial_{p}$ and Hamiltonian $H(p,q)=\frac12 p^{2}$. We use this Hamiltonian to define partially wrapped Floer cohomology adapted to the boundary sphere in the 0-section; that is, in this case $W = D^{2n}$ and $\Lambda \subset S^{2n-1}$ is the Legendrian sphere $\{p=0\}$. Unlike 
the usual Fukaya category for $D^{2n}$, which is trivial, the partially wrapped Fukaya category is nontrivial: for instance, if $F$ denotes the fiber then $HW(F,F)$ has rank 1.
\end{remark}

Below we will give a gluing operation similar to Lagrangian handle attachment/Legendrian surgery that allows us to construct Liouville manifolds with wrapping stopped by a Legendrian 
and to compute the resulting partially wrapped Floer cohomology in the spirit of the surgery formula of \cite{BEE}. Although this approach works rather generally we will restrict to the case of $W_K$ mentioned above. Here
the relevant Legendrian surgery operation that we perform can be thought of as an $S^{1}$-family of punctured disk attachments. We start by describing an individual punctured disk attachment.

\subsection{Attaching a punctured handle in dimension 2}
\label{ssec:babysurgery}
Before proceeding to $W_K$, we first consider a model case in $1$ dimension down. Our goal here is to describe partially wrapped Floer cohomology for a Lagrangian fiber disk as well as its pair-of-pants product. See Figure~\ref{fig:wrapped-R2} for a picture of the construction.

\begin{figure}
\labellist
\small\hair 2pt
\pinlabel $DT^*\R^2$ at 418 240
\pinlabel $W_0(\epsilon)$ at 130 310
\pinlabel $0$ at 252 198
\pinlabel $\xi$ at 127 198
\pinlabel $\eta$ at 270 320
\pinlabel $c^+$ at 302 224
\pinlabel $c^-$ at 211 140
\pinlabel $c_{\xi 0}$ at 190 226
\pinlabel $c_{0 \xi}$ at 180 160
\pinlabel ${\color{green} L_\xi}$ at 127 230
\pinlabel ${\color{green} \Lambda_\xi}$ at 86 236
\pinlabel ${\color{blue} \R^2}$ at 378 184
\pinlabel ${\color{red} L_0}$ at 259 143
\pinlabel ${\color{red} \Lambda_0}$ at 297 135
\pinlabel ${\color{red} D'}$ at 336 37
\pinlabel ${\color{red} D_\epsilon T^*D'}$ at 252 381
\pinlabel ${\color{magenta} \C_\eta}$ at 166 350
\endlabellist
\centering
\includegraphics[height=0.5\textwidth]{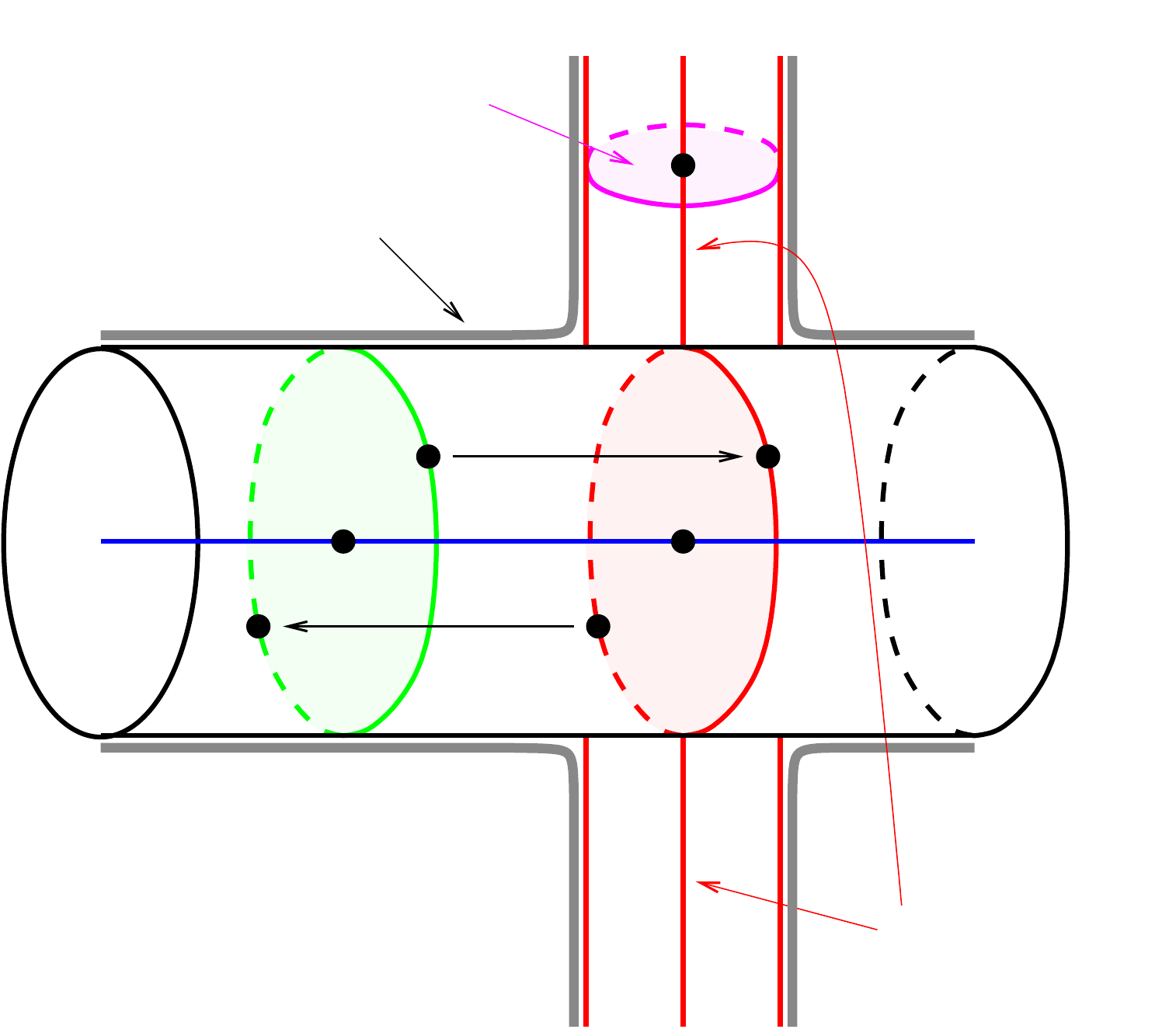}
\caption{
The construction of $W_0(\epsilon)$ from $T^*\R^2$.
}
\label{fig:wrapped-R2}
\end{figure}

Consider $T^{\ast}\R^{2}$ as above and fix the cotangent fiber $L_0$ thought of as the conormal of the point at the origin. We will construct a Weinstein manifold $W_0(\epsilon)$ with Lagrangian skeleton $\R^{2}\cup L_0$ by attaching a punctured Lagrangian handle to the Legendrian ideal boundary $\Lambda_0$ of $L_0$. The manifold $W_0(\epsilon)$ with its Liouville vector field is then the ambient space for partially wrapped Floer cohomology adapted to $\partial L_0$ and the boundary circle of the 0-section $\R^{2}$.  Let $D'$ denote a punctured disk equipped with the complete hyperbolic metric. To get a concrete model, consider the upper half plane with the hyperbolic metric:
\[
H=\{(x,y)\colon y>0\},\quad ds^{2}=\frac{dx^{2}+dy^{2}}{y}.
\]
Let $H_{y\ge 1}=H\cap\{y\ge 1\}$ and let $\Z$ act on $H_{\ge 1}$ by $n(x,y)=(x+n,y)$. Then
\[
D'=H_{\ge 1}/\Z.
\]
The metric induces a Hamiltonian $\frac12 p^{2}$ on $T^{\ast} D'$ for which the Liouville vector field $p\cdot\partial_{p}$ is gradient like. We now perform Lagrangian handle attachment of the $\epsilon$-disk cotangent bundle $D_{\epsilon}T^{\ast}D'$, gluing $\partial D'$ to $\Lambda$. This gives a Weinstein domain $W_0(\epsilon)$ of the desired type.

We next consider Reeb chords of the Legendrian boundaries of basic Lagrangian disks in $W_0(\epsilon)$. Let $\xi$ denote a point in $\R^{2}$, $\xi\ne 0$, and let $\eta$ denote a point in $D'$.
Let $L_\xi$ and $\C_\eta$ denote the Lagrangian disk fibers at $\xi$ and $\eta$. Let $\mathcal{C}_{\xi\xi}$ denote the set of Reeb chords from $\partial L_\xi$ to itself in $\partial W_0(\epsilon)$, $\mathcal{C}_{\xi\eta}$ denote the chords from $\partial L_\xi$ to $\partial \C_\eta$, and interpret $\mathcal{C}_{\eta\xi}$ and $\mathcal{C}_{\eta\eta}$ similarly. In $ST^{\ast}\R^{2}$, there is a unique Reeb chord $c_{\xi 0}$ connecting $\Lambda_\xi$ to $\Lambda_0$ (following the straight line segment from $\xi$ to $0$) and a unique Reeb chord $c_{0 \xi}$ connecting $\Lambda_0$ to $\Lambda_\xi$. Let $c^{+},c^{-}\in\Lambda_0$ denote the endpoint of $c_{\xi 0}$ and the beginning point of $c_{0 \xi}$, respectively.
\begin{lemma}
For $\epsilon>0$ sufficiently small, the Reeb chords in $\partial W_0(\epsilon)$ are as follows:
\label{lem:ReebR2}
\begin{itemize}
\item The set $\mathcal{C}_{\eta\eta}$ is in natural 1-1 correspondence with homotopy classes of loops in $D'$ connecting $\eta$ to $\eta$.
\item The set $\mathcal{C}_{\eta\xi}$ is in natural 1-1 correspondence with homotopy classes of paths in $D'$ connecting $p$ to $c^{-}$.
\item The set $\mathcal{C}_{\xi\eta}$ is in natural 1-1 correspondence with homotopy classes of paths in $D'$ connecting $c^{+}$ to $\eta$.
\item The set $\mathcal{C}_{\xi\eta}$ is in natural 1-1 correspondence with homotopy classes of loops in $D'$ connecting $c^{+}$ to $c^{-}$.
\end{itemize}	
\end{lemma}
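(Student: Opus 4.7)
The plan is to decompose each Reeb chord in $\partial W_0(\epsilon)$ into an alternating concatenation of pieces living in $\partial D_\epsilon T^*\R^2$ and $\partial D_\epsilon T^*D'$, glued along the interface $\Lambda_0 = \partial D'$, and to enumerate the possible decompositions in each of the four cases. First I would describe the Reeb dynamics in each piece separately. In $ST^*\R^2$, the Reeb flow is the unit-speed geodesic flow on flat $\R^2$, so Reeb chords from a fiber sphere $\Lambda_q$ to $\Lambda_{q'}$ are in bijection with oriented straight line segments from $q$ to $q'$. Specializing to $\{0,\xi\}$, the only chords involving $\Lambda_0$ and $\Lambda_\xi$ are $c_{\xi 0}$ (ending at $c^+\in\Lambda_0$) and $c_{0\xi}$ (starting at $c^-\in\Lambda_0$); neither of the two Legendrians has self-chords, since a flat geodesic leaving any point escapes to infinity. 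In $\partial D_\epsilon T^*D'$, the Reeb flow restricted to $ST^*D'$ is the hyperbolic geodesic flow, so Reeb chords between fibers over points of $D'$, or between such fibers and boundary points, correspond to hyperbolic geodesic segments with the prescribed endpoints. By negative curvature and completeness of the hyperbolic metric (Cartan--Hadamard applied to the universal cover $H_{y\ge 1}$), such geodesics are in bijection with homotopy classes of paths in $D'$ with the same endpoints.

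Next I would use the Lagrangian handle attachment picture to decompose a Reeb chord in $\partial W_0(\epsilon)$. Any such chord breaks, at each crossing of the interface $\Lambda_0$, as the concatenation of a piece in $ST^*\R^2$ ending or starting on $\Lambda_0$ with a piece in $ST^*D'$ starting or ending on $\partial D'=\Lambda_0$, with matching tangent direction at the crossing. But the classification of the $ST^*\R^2$ chords above forces the crossing to occur at exactly one of the two points $c^+$ or $c^-$. Counting by the number of crossings then yields the four cases: for $\mathcal{C}_{\eta\eta}$, a non-trivial excursion into $ST^*\R^2$ would require a flat geodesic loop in $\R^2$ based at $0$, which does not exist, so all chords stay in $ST^*D'$ and are geodesic loops at $\eta$, in bijection with $\pi_1(D',\eta)$; for $\mathcal{C}_{\eta\xi}$ (reading the ``$p$'' in the statement as $\eta$), there is exactly one crossing at $c^-$, the $ST^*\R^2$ part is $c_{0\xi}$, and the $D'$-part is a geodesic from $\eta$ to $c^-$, in bijection with homotopy classes of paths from $\eta$ to $c^-$; $\mathcal{C}_{\xi\eta}$ is symmetric, crossing once at $c^+$ via $c_{\xi 0}$; finally, the fourth case (which should read $\mathcal{C}_{\xi\xi}$) involves two crossings, entering at $c^+$ via $c_{\xi 0}$ and exiting at $c^-$ via $c_{0\xi}$, with the $D'$-part a geodesic from $c^+$ to $c^-$, classified by homotopy classes of such paths.

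The main obstacle is to rigorously justify the decomposition itself: that every Reeb chord of $\partial W_0(\epsilon)$ involving $\partial L_\xi$ or $\partial C_\eta$ is of the alternating form described, with transitions at $\Lambda_0$ occurring only at $c^\pm$, and that no exotic short chords appear near the gluing. This is where the hypothesis $\epsilon>0$ sufficiently small enters: it ensures the Reeb dynamics in the cotangent tubular neighborhood of $\Lambda_0$ used in the Liouville interpolation contain no small chord contributions, so that chords are controlled by the geodesic dynamics on the two base pieces. The cleanest way to make this precise is a neck-stretching argument along the contact hypersurface over $\Lambda_0$ (in the spirit of \cite{BEE}), identifying limiting configurations with concatenations of the building blocks above; alternatively, one can work with an explicit normal-form contact model in a collar of the attaching region and enumerate Reeb trajectories directly. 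Either way the enumeration matches the four claimed bijections.
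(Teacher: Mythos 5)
Your proposal is correct and follows essentially the same route as the paper: classify the chords of $ST^*\R^2$ and of $S_\epsilon T^*D'$ separately (using that hyperbolic geodesic flow gives a unique chord in each homotopy class), observe that chords of $\partial W_0(\epsilon)$ degenerate as $\epsilon\to 0$ to alternating words of these pieces meeting $\Lambda_0$ only at $c^\pm$, and invoke the fixed-point/gluing argument of \cite{BEE} to see that each such word corresponds to a unique chord for small $\epsilon$. You also correctly identify the typos in the statement (the ``$p$'' should be $\eta$, and the last bullet should concern $\mathcal{C}_{\xi\xi}$), and your explicit appeal to Cartan--Hadamard for the geodesic--homotopy-class bijection is a detail the paper leaves implicit.
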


\begin{proof}
We first note that there are no Reeb chords connecting $\Lambda_\xi$ to itself before the handle attachment, and as stated above there are unique Reeb chords between $\Lambda_{\xi}$ to $\Lambda_{0}$. Furthermore, since the Reeb flow on the boundary $S_{\epsilon}T^{\ast} D'$ of $D_{\epsilon}T^{\ast}D'$ is geodesic flow with respect to the hyperbolic metric on $D'$, there is a unique Reeb chord in $S_{\epsilon}T^{\ast} D'$ connecting $\eta$ to $c^-$ in any homotopy class of paths, and similarly there is a unique Reeb chord connecting $c^+$ to $\eta$ in any homotopy class.
A Reeb chord in $\partial W_0(\epsilon)$ converges as $\epsilon\to 0$ to an alternating word of Reeb chords inside and outside the handle. A straightforward fixed point argument as in \cite{BEE} shows that any such word corresponds to a chord after the surgery.	
\end{proof}

We can now compute the partially wrapped Floer cohomology $HW(L_{\xi},L_{\xi})$.
Fixing a path connecting $c^{+}$ to $c^{-}$, we have as in Lemma~\ref{lem:ReebR2} a 1-1 correspondence between Reeb chords in $ST^{\ast}D'$ connecting the fibers of these points and homotopy classes of loops in $D'$. We write $m$ for the generator of $\pi_{1}(D')$ and $c_{\xi 0}m^{k}c_{0\xi}$ for the Reeb chord in $\partial V_{0}$ corresponding to this word of Reeb chords.
To compute $HW(L_\xi,L_{\xi})$ we shift $L_\xi$ in the positive Reeb direction to obtain a shifted Lagrangian $L_{\xi}'$ and then consider chords from $L_{\xi}'$ to $L_{\xi}$. Generators for the model of $HW(L_{\xi}',L_{\xi})$ are then the following:
\begin{itemize}
\item an intersection point between $L_\xi$ and $L_{\xi}'$ in the middle of the disk $L_\xi$;
\item Reeb chords of the form $c_{\xi0}m^{k} c_{0\xi}$.	
\end{itemize}
As in \cite{BEE}, there is a chain map from $HW(L_{\xi}',L_{\xi})$ to this complex with the differential induced by the DGA differential before surgery, which is a chain isomorphism by an action filtration argument. Since the DGA differential is trivial we find that the wrapped Floer cohomology is generated by the generators of the model complex.

Finally, we describe the pair-of-pants product on $HW(L_{\xi},L_{\xi})$ using the above isomorphism. To this end, we must first describe that product on the model for wrapped Floer cohomology without Hamiltonian. The description is a consequence of arguments used in \cite{EHK} and \cite{EO} as follows. Let $S$ denote a horizontal strip in $\mathbb{C}$ with a horizontal slit pointing rightwards. We can view Floer holomorphic disks with two positive punctures as maps whose domain is $S$. For $\tau\in\R$, consider maps $u\colon (S,\partial S) \to (W_0(\epsilon),L_\xi)$ that solve the equation
\[
(du+\beta_{\tau}\otimes X_{H})^{0,1} =0,
\]
where $X_H$ is the Hamiltonian vector field and where $\beta$ is a non-positive 1-form on $S$ that in standard coordinates $s+it$ on $S$ interpolates in a region of width $1$ around $s=\tau$ between $dt$ to the left of the region and $0$ to the right.
We consider the solution space of dimension $0$ as $\tau$ ranges over $\R$. For $\tau=+\infty$ we find the isomorphism map from linearized contact homology to wrapped Floer cohomology followed by the standard pair-of-pants product. For $\tau=-\infty$ we find the product on linearized contact homology that counts disks with two positive and one negative puncture followed by the isomorphism. Split curves at other values of $\tau$ count rigid curves with interpolations with a curve contributing to boundary attached either above or below. It follows that these configurations do not contribute on the level of homology. We conclude that the pair-of-pants product on wrapped Floer cohomology corresponds to a count of disks with two positive punctures. (These disks may also involve punctures at Lagrangian intersection points.)

With this established we use a similar argument to transport the pair-of-pants product to the description of the wrapped Floer cohomology before the surgery: compose the product with the isomorphism map and study splittings. Here the other end of moduli space corresponds to two isomorphism disks joined by a disk with two mixed positive punctures (and several negative punctures at pure Reeb chords). We find that on words the product counts disks with two positive mixed punctures. Drawing the Lagrangian projection of $\Lambda_{\xi}$ and $\Lambda_{0}$ before the surgery, one can check that there are two such disks and they lie in distinct homotopy classes $1$ and $m$. Thus if $\cdot$ denotes the product we have that the empty word $1$ in the DGA acts as the identity and
\[
(c_{\xi0}m^{\alpha_1}c_{0\xi}) \cdot (c_{\xi 0}m^{\alpha_2}c_{0\xi}) =
c_{\xi0}m^{\alpha_1+\alpha_2} c_{0\xi} -  c_{\xi0}m^{\alpha_1+\alpha_2+1} c_{0\xi}.
\]

\begin{remark}
The above calculations show that the wrapped Floer cohomology of $HW(L_{\xi},L_{\xi})$ with its pair-of-pants product is ring isomorphic to
$\Z[m^{\pm 1}]$ via the map
\[
1\mapsto 1,\quad c_{\xi 0}m^{k} c_{0\xi} \mapsto m^{k}(1-m).
\]
Not coincidentally, $\Z[m^{\pm 1}]$ is the group ring of $\pi_1(\R^2\setminus\{0\})$, cf.\ Remark~\ref{rmk:wrapped} below.
\end{remark}

\subsection{The construction of $W_{K}$}\label{ssec:W_K}
We next turn to the construction of $W_K$. Consider $T^{\ast}\R^{3}$ and let $L_K$ denote the conormal Lagrangian as usual. In analogy with the construction of $W_0$ and the surgery calculation above, we want to construct a Weinstein domain $W_K$ so that if we want to compute partially wrapped Floer cohomology in $T^{\ast}\R^3$ adapted to $\partial \R^{3} \cup \Lambda_K$, we can write the partially wrapped Floer cohomology of fibers over the top-dimensional strata of the Lagrangian skeleton in terms of the contact homology DGA of the Legendrian attaching locus.

\begin{figure}
\labellist
\small\hair 2pt
\pinlabel $DT^*\R^3$ at 418 240
\pinlabel $W_K$ at 130 310
\pinlabel $K$ at 252 198
\pinlabel $p$ at 127 167
\pinlabel ${\color{green} L_p=\B}$ at 127 200
\pinlabel ${\color{green} \Lambda_p}$ at 86 236
\pinlabel ${\color{blue} \R^3}$ at 378 184
\pinlabel ${\color{red} L_K}$ at 259 143
\pinlabel ${\color{red} \Lambda_K}$ at 300 135
\pinlabel ${\color{red} S^1 \times D'}$ at 360 37
\pinlabel ${\color{red} D_\epsilon T^*(S^1\times D')}$ at 252 381
\pinlabel ${\color{magenta} \C}$ at 166 350
\pinlabel $DT^*\R^3$ at 598 107
\pinlabel $W_K$ at 598 48
\pinlabel ${\color{green} \B}$ at 542 161
\pinlabel ${\color{green} \Lambda_p}$ at 530 229
\pinlabel ${\color{red} L_K}$ at 598 173
\pinlabel ${\color{red} \Lambda_K}$ at 598 235
\pinlabel ${\color{red} S^1 \times D'}$ at 688 255
\pinlabel ${\color{magenta} \C}$ at 498 291
\endlabellist
\centering
\includegraphics[height=0.5\textwidth]{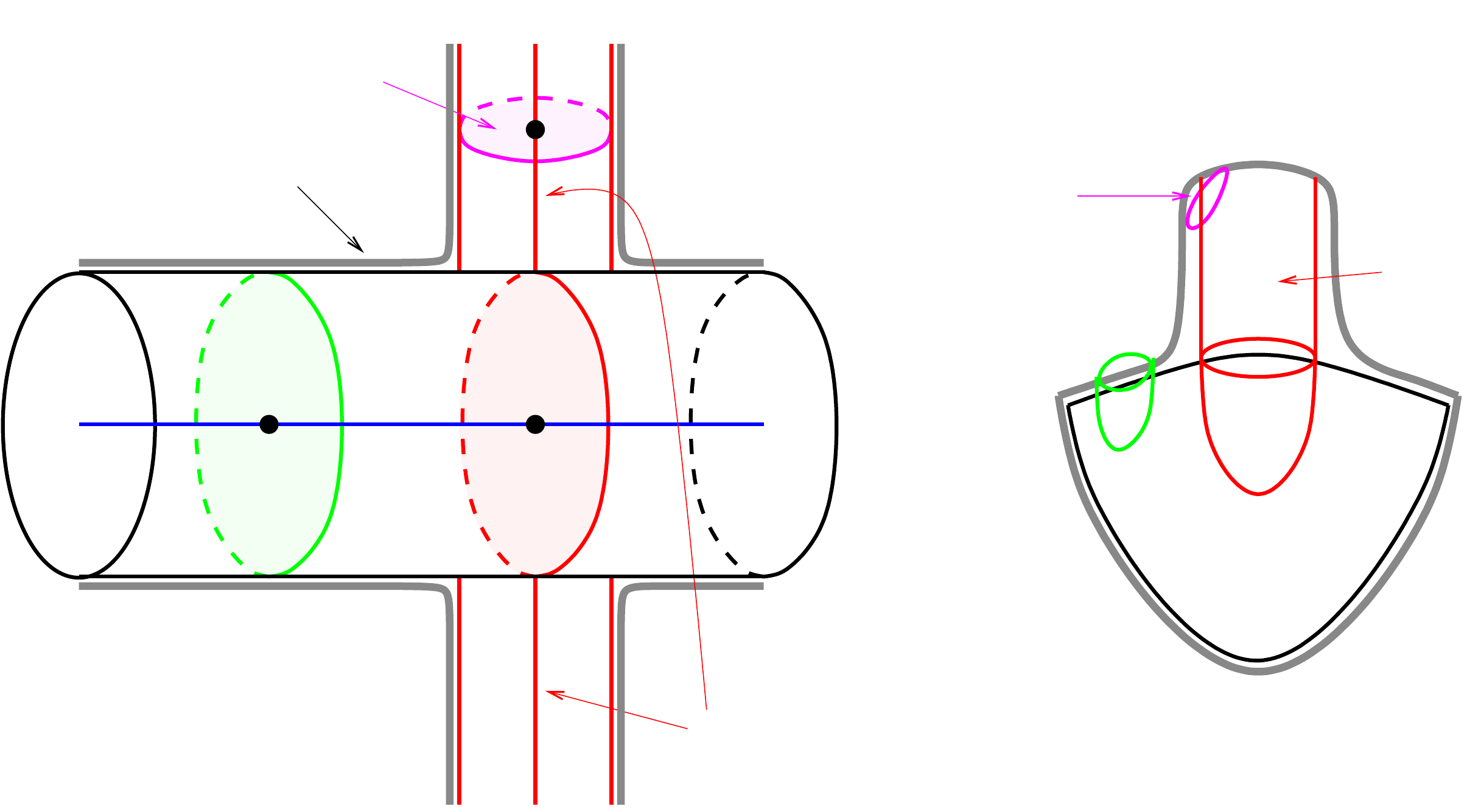}
\caption{The construction of $W_K$ from $T^*\R^3$ (left), and an alternate schematic picture (right).
}
\label{fig:wrapped-R3}
\end{figure}

We will construct $W_K$ by attaching the cotangent bundle of $[0,\infty)\times T^{2}$ to $T^{\ast}\R^{3}$ as follows; see Figure~\ref{fig:wrapped-R3}. Note first that the conormal torus of the knot $\Lambda_{K}$ can be viewed as an $S^{1}$-family of conormal circles over points in $K$; now consider $S^{1}\times D'$ with the product metric (the metric on $D'$ is the complete hyperbolic metric as in Section \ref{ssec:babysurgery}) and its cotangent bundle $T^{\ast}(S^{1}\times D')$. We perform Lagrangian attachment of the $\epsilon$-disk cotangent bundle $D_{\epsilon}T^{\ast}(S^{1}\times D')$ to the disk cotangent bundle $DT^{\ast}\R^3$ by gluing $S^{1}\times \partial D'$ to $\Lambda_K$. This is our desired Weinstein domain $W_{K}(\epsilon)$, which depends on the size $\epsilon$ of the disk bundle attached. In analogy with the above we calculate the wrapped Floer cohomology for two basic Lagrangian disks in $W_K$, the \emph{outer} disk $\B$ which is the fiber of a point in $\R^{3}$ not on $K$ and the \emph{inner} disk $\C$, the fiber over a point in $S^{1}\times D'$.

We begin by a description of the Reeb chords after surgery in terms of data before. We write $\mathcal{C}(\Lambda_1,\Lambda_2)$ for the set of Reeb chords in $\partial W_K$ starting on $\Lambda_1$ and ending on $\Lambda_2$. As in Section~\ref{ssec:LCHlink}, write $\mathcal{R}^{KK}$ for Reeb chords of $\Lambda_{K}$ in $ST^{\ast}\R^{3}$, and $\mathcal{R}^{Kp}$ and $\mathcal{R}^{pK}$ for Reeb chords in $ST^{\ast}\R^{3}$ to $\Lambda_K$ from $\Lambda_p = \partial B$, and to $\Lambda_p$ from $\Lambda_K$, respectively.
Also, write $\pi_{1}(S^{1}\times D')=\Z^2=\langle l,m\rangle$.

\begin{lemma}
For $\epsilon>0$ sufficiently small we have the following.
\label{lem:Reeb-chords}

\begin{itemize}
\item There is a natural 1-1 correspondence between the elements of $\mathcal{C}(\partial \C,\partial \C)$ and words of the form
\[
l^{\alpha_1}m^{\beta_1}\, c_1\, l^{\alpha_2}m^{\beta_2}\, c_2\, \cdots c_r\,l^{\alpha_{r+1}}m^{\beta_{r+1}},
\]
where $c_j\in\mathcal{R}^{KK}$ and either $r \geq 1$ or $r=0$ and $(\alpha_1,\beta_1) \neq (0,0)$.
\item  There is a natural 1-1 correspondence between the elements of $\mathcal{C}(\partial \C,\partial \B)$ and words of the form
\[
a l^{\alpha_1}m^{\beta_1}\, c_1\, l^{\alpha_2}m^{\beta_2}\, c_2 \cdots c_r\,l^{\alpha_{r+1}}m^{\beta_{r+1}},
\]
where $r \geq 0$, $c_j\in\mathcal{R}^{KK}$, and $a\in\mathcal{R}^{pK}$.
\item There is a natural 1-1 correspondence between the elements of $\mathcal{C}(\partial \B,\partial \C)$ and words of the form
\[
l^{\alpha_1}m^{\beta_1}\, c_1 \,l^{\alpha_2}m^{\beta_2} \,c_2\, \cdots c_r\,l^{\alpha_{r+1}}m^{\beta_{r+1}}\,b,
\]
where $r \geq 0$, $c_j\in\mathcal{R}^{KK}$, and $b\in \mathcal{R}^{Kp}$.
\item  There is a natural 1-1 correspondence between the elements of $\mathcal{C}(\partial \B,\partial \B)$ and words of the form
\[
a\, l^{\alpha_1}m^{\beta_1} \, c_1 \,l^{\alpha_2}m^{\beta_2}\, c_2 \cdots c_r\,l^{\alpha_{r+1}}m^{\beta_{r+1}}\,b,
\]
where $r \geq 0$, $c_j\in\mathcal{R}^{KK}$, $a\in\mathcal{R}_{pK}$, and $b\in \mathcal{R}_{Kp}$.
\end{itemize}	
\end{lemma}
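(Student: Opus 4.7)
The plan is to adapt the Legendrian surgery/fixed-point argument of \cite{BEE} and the two-dimensional analogue already carried out in Lemma~\ref{lem:ReebR2} to the present setting. The basic picture is that a Reeb chord of $\partial W_K(\epsilon)$ in the limit $\epsilon\to 0$ degenerates into a broken chord whose successive pieces alternate between chords ``inside'' the handle (i.e., in the cosphere bundle of $S^1\times D'$) and chords ``outside'' the handle (i.e., in $ST^\ast\R^3$ connecting $\Lambda_K$ or $\Lambda_p$ to $\Lambda_K$), and every such alternating word is realized by a unique chord after surgery.

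First I would analyze the chords inside the handle. Since the metric on $S^1\times D'$ is a product of a flat and a complete hyperbolic metric, the Reeb flow on $S_\epsilon T^\ast(S^1\times D')$ is geodesic flow for this product metric, and geodesics in each nontrivial free homotopy class of loops based at a point $\eta\in S^1\times D'$ are unique. Using $\pi_1(S^1\times D',\eta)=\Z\langle l\rangle\oplus\Z\langle m\rangle$ (with $l$ the $S^1$-factor and $m$ the generator of $\pi_1(D')$), this gives a bijection between nontrivial Reeb chords from $\partial\C$ to itself inside the handle and monomials $l^\alpha m^\beta$ with $(\alpha,\beta)\ne(0,0)$; similarly, after fixing reference paths from $\ast_K\in\Lambda_K$ to suitable boundary points of the interval $\partial D'$, Reeb chords from a boundary point of $\partial D'\times S^1$ to $\partial\C$ (or in the opposite direction) are in bijection with elements of $\Z^2$ as well.

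Next I would build the dictionary on the outside. Before surgery, the Reeb chords in $ST^\ast\R^3$ with endpoints on $\Lambda_K\cup\Lambda_p$ are precisely $\mathcal R^{KK}\cup\mathcal R^{Kp}\cup\mathcal R^{pK}$ (with no self-chords of $\Lambda_p$). A Reeb chord in $\partial W_K(\epsilon)$ connecting two chosen Lagrangians $\Lambda_1,\Lambda_2\in\{\partial\B,\partial\C\}$ therefore degenerates, as $\epsilon\to 0$, into a concatenation starting at $\Lambda_1$ and ending at $\Lambda_2$ whose external pieces are chords in $\mathcal R^{KK}$ (with $\mathcal R^{Kp}$ or $\mathcal R^{pK}$ only appearing at an endpoint adjacent to $\partial\B$, since $\Lambda_p$ has no self-chords), and whose internal pieces are the geodesic chords inside the handle identified above. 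This directly produces the four word formats listed in the statement, with composability determined by the alternation between $\Lambda_K$-endpoints of external chords and $\partial(\partial D'\times S^1)$-endpoints of internal chords. I would then run the standard contraction/fixed point argument from \cite[Section 5]{BEE}: for each such combinatorial word, gluing the pieces with sufficiently small $\epsilon$ yields a unique genuine Reeb chord in $\partial W_K(\epsilon)$, and conversely any chord of bounded action converges as $\epsilon\to 0$ to a unique such word.

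The main obstacle will be the contraction/fixed point step, since the inside of the handle is noncompact and the Reeb dynamics on $S_\epsilon T^\ast(S^1\times D')$ is not periodic; one has to control the broken chords so that their internal pieces remain in the compact part of the skeleton and use uniqueness of closed geodesics in each free homotopy class to bound the number of breakings. The standard way around this is an action filtration: for any fixed action cutoff there are only finitely many combinatorial words to consider, each of the inside geodesic pieces is isolated and transversely cut out, and Banach space implicit function theorem arguments as in \cite{BEE} then apply verbatim. Once this is set up, the lemma follows by letting the action cutoff tend to infinity.
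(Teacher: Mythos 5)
Your proposal is correct and follows essentially the same route as the paper: the paper's (very terse) proof likewise observes that as $\epsilon\to 0$ Reeb chords of $\partial W_K(\epsilon)$ degenerate into alternating words of chords inside the handle (unique geodesic chords of the product metric on $S^1\times D'$ in each homotopy class, indexed by $l^\alpha m^\beta$) and chords of $\Lambda_K\cup\Lambda_p$ outside, and then invokes the fixed-point/gluing argument of \cite{BEE} to show each such word corresponds to a unique chord for small $\epsilon$. Your added care about the noncompactness of the handle and the action filtration is a reasonable elaboration of what the paper leaves implicit in its citation of \cite{BEE}.
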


\begin{proof}
As $\epsilon\to 0$ Reeb chords in $W_K$, traversed from bottom to top, have limits as the words described, where each $c_i,a,b$ is also traversed backwards. (We follow the Reeb chords in $W_K$ backwards in order to obtain composable words; note that composable words in the spirit of Section~\ref{ssec:LCHlink} consist of a concatenation of Reeb chords whose consecutive endpoints line up \textit{if} the chords are traversed backwards.)
As in \cite{BEE} a fixed point argument shows that every word glues to a unique Reeb chord for small $\epsilon>0$.
\end{proof}

\begin{remark}
Note that in the construction of $W_K$ above we could have attached any model of $T^{\ast} S^{1}\times (D^{2}\setminus \{0\})$ with Reeb flow agreeing with that of a complete metric at infinity. The reason for the particular choice we made was to get a model suitable for direct calculations. This can be compared to the concrete handle model used in \cite{BEE} which is not the only possible choice but well suited for calculations.
\end{remark}

\subsection{Partially wrapped Floer cohomology in $W_K$}
\label{ssec:wrapped-WK}

We now turn to partially wrapped Floer cohomology in $T^*\R^3$ via $W_K$. In particular, we study $HW(L_1,L_2)$ where each of $L_1,L_2$ is one of the Lagrangians $\B,\C$ in $W_K$.
The complexes $CW(\C,\B)$ and $CW(\B,\C)$ are generated by elements of $\mathcal{C}(\partial \C,\partial \B)$ and $\mathcal{C}(\partial \B,\partial \C)$, respectively, while $CW(\C,\C)$ and $CW(\B,\B)$ are generated by elements of $\mathcal{C}(\partial \C,\partial \C)$ and $\mathcal{C}(\partial \B,\partial \B)$, along with an additional generator corresponding to an intersection point between the Lagrangian and its pushoff.

Now note that the Reeb chords in Lemma~\ref{lem:Reeb-chords} can be viewed as composable elements in the DGA $\A_{\Lambda_K \cup \Lambda_p}$. For $CW(\C,\B)$ and $CW(\B,\C)$, these are precisely the composable words with exactly $1$ mixed Reeb chord. For $CW(\C,\C)$, the Reeb chords in $\mathcal{C}(\partial \C,\partial \C)$ are all possible words involving only pure Reeb chords of $\Lambda_K$, with the exception of $l^0 m^0 = 1$; but $CW(\C,\C)$ has an extra generator corresponding to $1$. That is, we have:
\begin{align*}
CW(\C,\C) &= \A_{\Lambda_K,\Lambda_K}^{(0)} \\
CW(\C,\B) &= \A_{\Lambda_p,\Lambda_K}^{(1)} \\
CW(\B,\C) &= \A_{\Lambda_K,\Lambda_p}^{(1)}.
\end{align*}
For $CW(\B,\B)$, the Reeb chords in $\mathcal{C}(\partial \B,\partial \B)$ are the composable words beginning and ending on $\Lambda_p$ with exactly $2$ mixed Reeb chords, and $CW(\B,\B)$ is generated by these along with the extra generator. Thus the generating set of $CW(\B,\B)$ is naturally the same as the generating set of $\F^0\A_{\Lambda_p,\Lambda_p}/ \F^4\A_{\Lambda_p,\Lambda_p}$ (the extra generator plays the role of $1$), and we have:
\[
CW(\B,\B) = \Z \oplus \A_{\Lambda_p,\Lambda_p}^{(2)}.
\]

\begin{figure}
\labellist
\small\hair 2pt
\pinlabel ${\color{magenta} \C}$ at 17 137
\pinlabel ${\color{magenta} \C}$ at 83 137
\pinlabel ${\color{magenta} \C}$ at 209 137
\pinlabel ${\color{magenta} \C}$ at 267 137
\pinlabel ${\color{green} \B}$ at  141 137
\pinlabel ${\color{green} \B}$ at  337 137
\pinlabel ${\color{green} \B}$ at  393 137
\pinlabel ${\color{green} \B}$ at  463 137
\pinlabel ${\color{red} L_K}$ at 18 64
\pinlabel ${\color{red} L_K}$ at 83 64
\pinlabel ${\color{red} L_K}$ at 208 64
\pinlabel ${\color{red} L_K}$ at 269 64
\pinlabel ${\color{red} L_K}$ at 48 48
\pinlabel ${\color{red} L_K}$ at 174 48
\pinlabel ${\color{red} L_K}$ at 300 48
\pinlabel ${\color{red} L_K}$ at 427 48
\endlabellist
\centering
\includegraphics[width=0.8\textwidth]{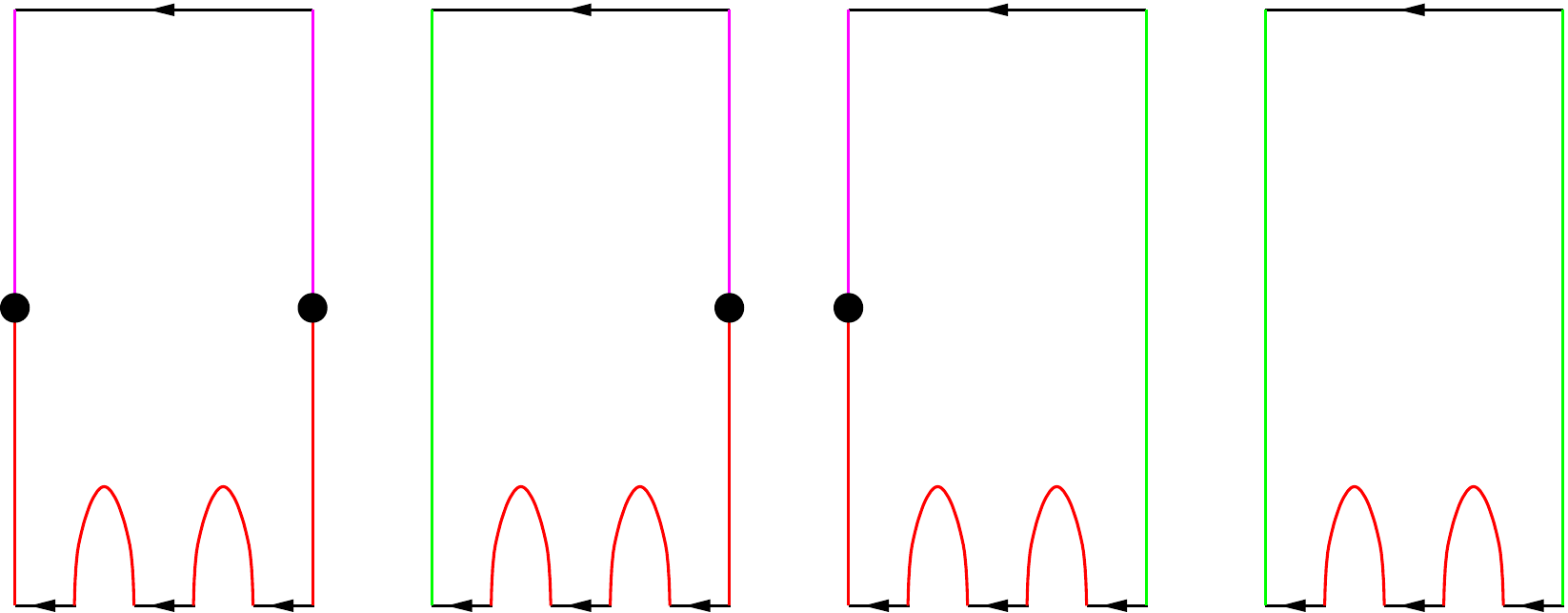}
\caption{
The holomorphic disks defining the maps $\Phi$ on, from left to right:
$CW(\C,\C)$; $CW(\C,\B)$; $CW(\B,\C)$; $CW(\B,\B)$. In each case, a Reeb chord with endpoints on $\partial \B \cup \partial \C$ is the input at the top, and the output at the bottom is a product of Reeb chords and homology classes for $\Lambda_K \cup \Lambda_p$. For the $L_K$ boundary conditions, we extend $L_K$ into $W_K$ by taking its union with $S^1\times D'$, see Figure~\ref{fig:wrapped-R3}. Arrows represent orientations of Reeb chords.}
\label{fig:wrapped-isom}
\end{figure}

We next claim that the differentials on the Floer complexes agree with the differentials from the DGA. Indeed, there are chain maps $\Phi$ from $CW$ to the corresponding filtered quotients of $\A_{\Lambda_K \cup \Lambda_p}$ that count the following holomorphic disks:
\begin{itemize}
\item For $CW(\C,\C)$, disks with one positive puncture, two Lagrangian intersection punctures at $\C\cap (S^{1}\times D')$, and negative punctures at chords in $\mathcal{R}^{KK}$.
\item For $CW(\C,\B)$, disks with one positive puncture, a first negative puncture at a chord in $\mathcal{R}^{pK}$, several negative punctures at chords in $\mathcal{R}^{KK}$, and a Lagrangian intersection puncture at $\C\cap (S^{1}\times D')$.
\item For $CW(\B,\C)$, disks with one positive puncture, a Lagrangian intersection at $\C\cap (S^{1}\times D')$, several negative punctures at chords in $\mathcal{R}^{KK}$, and a last negative puncture at a chord in $\mathcal{R}^{Kp}$.
\item For $CW(\B,\B)$, disks with one positive puncture, a first negative puncture at a chord in $\mathcal{R}^{pK}$, several negative punctures at chords in $\mathcal{R}^{KK}$, and a last negative puncture at a chord in $\mathcal{R}^{pK}$.
\end{itemize}
\noindent See Figure~\ref{fig:wrapped-isom}.

\begin{lemma}
The chain maps $\Phi$ are chain isomorphisms.
\label{lma:wrapped-isom}
The degrees of the chain maps on $CW(\B,\B)$ and $CW(\C,\C)$ are 1 and 0, respectively, and there is a choice of reference path connecting $\partial \C$ to $\partial \B$ so that the grading shifts on $CW(\C,\B)$ and $CW(\B,\C)$ are $1$ and $0$, respectively.
\end{lemma}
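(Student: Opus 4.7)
The plan is to carry out a partially wrapped analogue of the Legendrian surgery formula of \cite{BEE}, in the spirit of \cite[Conjecture 3]{EL}. By Lemma~\ref{lem:Reeb-chords}, the generators of each $CW(L_1,L_2)$ are in natural bijection with the stated generators of the corresponding filtered quotient of $\A_{\Lambda_K \cup \Lambda_p}$, where the additional intersection-point generator in $CW(\B,\B)$ and $CW(\C,\C)$ corresponds to the unit in $\Z \oplus \A^{(2)}_{\Lambda_p,\Lambda_p}$ and to the empty composable word in $\A^{(0)}_{\Lambda_K,\Lambda_K}$, respectively. Thus the content of the lemma is to check that the count-of-disks map $\Phi$ respects the grading with the claimed shifts and that it intertwines the Floer differential on $CW$ with the DGA differential.

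For the grading, I would compute the Conley--Zehnder index of a glued Reeb chord $c$ on $\partial W_K$ corresponding to a word $w$, following \cite[Section 5.3]{BEE}: the index equals the sum of Conley--Zehnder indices of the constituents of $w$, shifted by a contribution from the handle at the neck region and, for mixed words, by the choice of reference path. The handle $S^1 \times D'$ has trivial Maslov class, so the only nontrivial shift comes from the mixed chords. Concretely, each $\mathcal{R}^{pK}$-chord in the word of $c$ contributes a shift of $+1$ (matching the $+1$ shift for $c \in \mathcal{R}^{pK}$ in Proposition~\ref{prop:degrees}) once the reference path between $\partial \C$ and $\partial \B$ is aligned with the capping path chosen in Section~\ref{ssec:enhanced}. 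Counting mixed chords per word then gives shifts of $0$ on $CW(\C,\C)$ and $CW(\B,\C)$ and $1$ on $CW(\B,\B)$ and $CW(\C,\B)$.

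The chain map property follows from a standard analysis of the codimension-one boundary of the moduli spaces defining $\Phi$, which in the limit of a long neck along $\partial W_K$ splits into two kinds of two-level buildings: those where the top level is a nontrivial strip in the symplectization of $\partial W_K$ (contributing $\Phi \circ \d_{CW}$) and those where the bottom level is a Floer disk in $W_K$ with the interior handle broken off into disks in $T^*\R^3$ on $\R^3 \cup L_K \cup L_p$ and $\Lambda_K \cup \Lambda_p$ (contributing $\d \circ \Phi$). Matching these with the correct signs yields the chain-map equation.

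Finally, to upgrade $\Phi$ to a chain isomorphism I would use an action filtration. Order generators by the action of the underlying Reeb chord on $\partial W_K$; as $\epsilon \to 0$, the action of a surgery chord $c$ approaches the total action of the corresponding word $w$, so after a lower-order perturbation, $\Phi$ is upper-triangular in this filtration. The diagonal entry on $c$ is the count of a single rigid gluing disk, which can be arranged to equal $\pm 1$ by transversality and orientation choices as in \cite[Section 5]{BEE}. An induction on action then gives invertibility. The main obstacle will be justifying upper-triangularity: one must use monotonicity/area estimates at the neck to rule out holomorphic disks whose output word carries less total action than expected. This parallels the estimates in \cite[Section 7]{CELN} and \cite[Section 5]{BEE}, and in the partially wrapped setting is predicted by \cite[Conjecture 3]{EL}.
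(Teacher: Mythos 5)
Your proposal matches the paper's proof, which likewise reduces the isomorphism statement to the Legendrian surgery formula of \cite{BEE} via curves over almost-trivial strips (your diagonal gluing disks, giving an upper-triangular map with $\pm 1$ on the diagonal) together with an action/energy filtration argument. The only divergence is in the degree statement, where the paper reads the shift off the Lagrangian-corner contributions to the index formula for the disks defining $\Phi$ rather than from Conley--Zehnder additivity for the glued chords as you do; since the paper explicitly omits a full justification of the grading, your accounting via the number of $\mathcal{R}^{pK}$-chords per word is consistent with, and somewhat more explicit than, what is written.
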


\begin{proof}
This follows from constructions of curves over almost trivial strips and an energy filtration argument as in \cite{BEE}. We will omit a full justification of the degree statement since this involves a careful consideration of the grading on the wrapped Floer complex, which we have not discussed here. However, note that two Lagrangian corners of codimension 3 gives a contribution to the dimension by $-3+2=-1$ which means that the grading of the Reeb chord at the positive puncture is one above the sums of the gradings at the negative end. (Compare the grading shift by $(n-2)$ in the wrapped homology surgery isomorphism from \cite{BEE}.)
\end{proof}

Lemma~\ref{lma:wrapped-isom} allows us to relate the wrapped Floer picture to the KCH-triple. The maps $\Phi$ descend in homology to isomorphisms
\begin{align*}
HW^0(\C,\C) & \stackrel{\cong}{\to} H_0(\A_{\Lambda_K,\Lambda_K}^{(0)}) \cong R_{KK} \\
HW^0(\C,\B) & \stackrel{\cong}{\to} H_1(\A_{\Lambda_p,\Lambda_K}^{(1)}) \cong R_{pK} \\
HW^0(\B,\C) & \stackrel{\cong}{\to} H_0(\A_{\Lambda_K,\Lambda_p}^{(1)}) \cong R_{Kp}.
\end{align*}
That is, the KCH-triple is precisely given by partially wrapped Floer cohomology in lowest degree.
In addition, $\Phi$ gives an isomorphism
\[
HW^0(\B,\B) \stackrel{\cong}{\to} \Z \oplus H_1(\A_{\Lambda_p,\Lambda_p}^{(2)}) \cong \Z \oplus R_{pp},
\]
where the second isomorphism comes from Proposition~\ref{prop:Rpp}.

We examine this last isomorphism $HW^0(\B,\B) \cong \Z\oplus R_{pp}$ more carefully. The right hand side has a product given by $\mu$. The left hand side also has a product: on $CW(\B,\B)$, there is a pair-of-pants product
\[
\Pi\colon CW(\B,\B)\otimes CW(\B,\B)\to CW(\B,\B)
\]
which here has degree $0$, cf.\ Remark~\ref{r:gendim}.
We claim that this agrees in homology with $\mu$ under the isomorphism $\Phi$.

\begin{lemma}\label{l:pairofpants}
There is a degree $0$ pairing $P\colon CW(\B,\B)\otimes CW(\B,\B)\to CW(\B,\B)$ such that
\[
\Phi\circ\Pi - \mu\circ(\Phi\otimes\Phi) + P\circ(1\otimes d+d\otimes 1) - \partial\circ P = 0
\]
where $d$ is the differential on $CW(\B,\B)$. In particular, on homology $\Phi$ sends $\Pi$ to $\mu$.
\end{lemma}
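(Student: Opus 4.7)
The plan is to construct $P$ by counting holomorphic disks in a one-parameter family of geometric setups interpolating between the configuration relevant for $\Phi\circ \Pi$ and that for $\mu\circ(\Phi\otimes\Phi)$, and then to extract the identity from the boundary of the parametrized moduli space. The argument parallels the proof of Proposition~\ref{prop:Psi-mu}, with the symplectization/cobordism picture there replaced by an SFT-type neck-stretching inside $W_K$. The key point is that both $\Phi$ and $\Pi$ count disks in $W_K$, while $\mu$ counts disks in the symplectization $\R\times ST^{\ast}\R^{3}$; a neck-stretching along a contact-type hypersurface that separates these two regions provides the interpolation.

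Concretely, introduce a stretching parameter $\tau\in[0,\infty]$ along a contact hypersurface separating the symplectization end of $W_K$ from its interior. At $\tau=0$, the relevant moduli space consists of holomorphic disks in $W_K$ with two positive punctures at Reeb chords in $\mathcal{C}(\partial \B,\partial \B)$, two Lagrangian corner punctures at $\C\cap(S^{1}\times D')$, and negative punctures at chords in $\mathcal{R}^{KK}$; by the definition of $\Phi$ on $CW(\B,\B)$ composed with the pair-of-pants $\Pi$, this count yields $\Phi\circ \Pi$. At $\tau=\infty$, SFT compactness forces the disks to split into a top-level disk in $\R\times ST^{\ast}\R^{3}$ with two positive punctures at mixed chords (one in $\mathcal{R}^{Kp}$ and one in $\mathcal{R}^{pK}$) and negative punctures at pure $\Lambda_K$-chords, glued to two bottom-level disks in $W_K$, each with one positive mixed puncture, one Lagrangian corner on $\C$, and pure negative punctures. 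The top-level piece is exactly a disk counted by $\mu$, while the two bottom-level pieces are those counted by $\Phi$ on $CW(\C,\B)$ and $CW(\B,\C)$ respectively, as in Lemma~\ref{lma:wrapped-isom}. Thus the $\tau=\infty$ end contributes $\mu\circ(\Phi\otimes\Phi)$. Define $P$ to be the signed count, with coherent orientations as in \cite{E08, EESori}, of all rigid configurations that occur for generic intermediate values of $\tau\in(0,\infty)$.

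The identity then follows from the standard boundary analysis of the one-parameter family of one-dimensional parametrized moduli spaces carrying $P$. The codimension-one boundary consists of four types of strata: the endpoints at $\tau=0$ and $\tau=\infty$, contributing $\Phi\circ \Pi - \mu\circ(\Phi\otimes\Phi)$; symplectization breakings at a positive puncture that apply the wrapped differential $d$ on $CW(\B,\B)$ to one of the two inputs, contributing $P\circ(d\otimes 1+1\otimes d)$; symplectization breakings at a negative puncture that apply the differential $\partial$ on $\A^{(0)}_{\Lambda_K,\Lambda_K}$ to the output, contributing $-\partial\circ P$; and Lagrangian corner splittings at $K$ or at $\C\cap(S^{1}\times D')$, which cancel in pairs by the same mechanism that made each individual $\Phi$ a chain map in Lemma~\ref{lma:wrapped-isom}. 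The signed count of the boundary vanishes, yielding the asserted equation.

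The main obstacle is the usual package of transversality for the parametrized family together with a careful identification of the SFT limit at $\tau=\infty$: one must verify that only the expected two-level buildings appear, with no extraneous symplectization pieces, and that the coherent orientations match those used to define $\mu$ and $\Phi$. This is a neck-stretching analysis in the spirit of \cite{BEE}, complicated here by the presence of Lagrangian corners at $\C\cap(S^{1}\times D')$, which must be treated as in Remark~\ref{r:rotations} and the proof of Lemma~\ref{lma:wrapped-isom}. Once the chain homotopy identity is established at the chain level, passing to homology kills the terms involving $P$ and $d$, so $\Phi$ intertwines $\Pi$ with $\mu$ on homology as claimed.
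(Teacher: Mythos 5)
Your overall intuition---that the four terms of the identity should appear as the four types of codimension-one degeneration of a one-dimensional moduli space of disks with two positive punctures at chords of $\partial\B$---is the right one, and the last paragraph of your argument lists essentially the correct strata. But the way you set up the moduli space does not work, and it differs from what the paper does. The paper works in the \emph{fixed} surgery cobordism (positive end $\partial W_K$, negative end $ST^*\R^3$ containing $\Lambda_K\cup\Lambda_p$): $P$ is defined as the count of \emph{rigid} disks there with two positive punctures at chords of $\partial\B$ and negative punctures along a word consisting of a mixed chord in $\mathcal{R}^{pK}$, pure chords of $\Lambda_K$, and a mixed chord in $\mathcal{R}^{Kp}$, and the identity is read off from the ends of the corresponding one-dimensional moduli spaces. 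In that picture $\Phi\circ\Pi$ and $\mu\circ(\Phi\otimes\Phi)$ are both \emph{broken} configurations arising as SFT breakings of the same moduli space---the first by splitting off a two-positive-punctured level at the positive end $\partial W_K$, the second by splitting off a two-positive-punctured level at the negative end $ST^*\R^3$. Your $\tau$-family replaces this with an interpolation whose endpoints are claimed to \emph{equal} the two compositions: the assertion that the unstretched ($\tau=0$) count ``by definition yields $\Phi\circ\Pi$'' is false as stated, since a composition of operations is a two-level building, not a single moduli space; identifying the two is precisely the gluing/compactness content of the lemma and holds only up to the chain homotopy $P$. Moreover, at $\tau=0$ the asymptotics you prescribe (negative punctures at $\mathcal{R}^{KK}$ chords) do not even make sense before stretching along $ST^*\R^3$.

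There are also concrete errors in the boundary conditions. The identity takes values in $\Z\oplus\A^{(2)}_{\Lambda_p,\Lambda_p}$, so the output word must contain \emph{two} mixed chords (one in $\mathcal{R}^{pK}$ and one in $\mathcal{R}^{Kp}$) flanking the pure $\Lambda_K$-chords; it is not a word of pure chords. And the disk $\C$ plays no role in $CW(\B,\B)$: the relevant boundary conditions are on $\B$ and on $L_K$ extended by $S^1\times D'$, with no Lagrangian intersection punctures at $\C\cap(S^1\times D')$, so the ``corner splittings at $\C$'' you invoke are not actual boundary strata here. Once the moduli space is set up as in the paper---rigid and one-dimensional disks in the fixed cobordism with the correct two-mixed-chord negative asymptotics---the argument goes through exactly as you describe in your final boundary count, with no auxiliary parameter needed.
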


\begin{proof}
Consider the $1$-dimensional moduli space of disks in the cobordism with two positive punctures at chords connecting $\partial \B$ to $\partial \B$ and with negative punctures at a mixed chord $\partial \B\to\Lambda_K$ followed by an alternating word of chords and homotopy classes on $\Lambda_K$ and then a mixed chord $\Lambda_K\to \partial \B$. If $P$ counts the corresponding rigid curves in the cobordism then terms in the left hand side of the equation count ends of a 1-dimensional compact oriented moduli space. The lemma follows.
\end{proof}

Thus the isomorphism between $HW^0(\B,\B)$ and $\Z \oplus R_{pp}$ is a ring isomorphism.
By Proposition~\ref{prop:ring-isom}, we conclude that:
\[
HW^0(\B,\B) \cong \Z[\pi_1(\R^3 \setminus K)].
\]
This is as expected by the general theory, see Remark~\ref{rmk:wrapped} below.

\begin{remark}
We can use the isomorphism $\Phi$ to describe other pair-of-pants products of the form
$HW(L_2,L_3) \otimes HW(L_1,L_2) \to HW(L_1,L_3)$, as follows:
\begin{itemize}
\item Exactly as for $HW(\B,\B)$ above, the product
\[
HW^0(\B,\C)\otimes HW^0(\C,\B) \to HW_{1}(\C,\C)
\]	
corresponds to the product $\mu$ counting rigid disks with two positive mixed punctures and several negative pure punctures.
\item Arguing as in \cite[Theorem 5.8]{BEE}, we find that the products
\begin{align*}
&HW^0(\C,\C)\otimes HW^0(\C,\C) \to HW^0(\C,\C),\\
&HW^0(\C,\B)\otimes HW^0(\C,\C) \to HW^0(\C,\B),\\
&HW^0(\C,\C)\otimes HW^0(\B,\C) \to HW^0(\B,\C),\\
&HW^0(\C,\B)\otimes HW^0(\B,\C) \to HW^0(\B,\B)
\end{align*}
correspond to concatenation in the DGA's.
\end{itemize}	
\end{remark}

\begin{remark}\label{rmk:wrapped}
We have seen that $HW^0(\C,\C)$ is knot contact homology in degree $0$, while $HW^0(\B,\B)$ is the group ring of $\pi_1(\R^3\setminus K)$. Both of these are consistent with general considerations in wrapped Floer cohomology. The picture for the partially wrapped Floer cohomology of $\C$ is analogous to the calculation of the wrapped Floer cohomology of a cocore disk in Legendrian handle attachment, which via the surgery isomorphism \cite{BEE} is isomorphic to the Legendrian contact homology of the attaching sphere. In our setting, $\C$ plays the role of the cocore disk, while $\Lambda_K$ plays the role of the Legendrian attaching sphere. Compared to the situation in \cite{BEE}, we are attaching a different handle, and the Reeb flow inside the handle is more complicated but easy to control from the point of view of holomorphic disks since all Reeb chords inside the punctured handle have index $0$.

Our calculation of the partially wrapped Floer cohomology of $\B$ relates it to the DGA of $\Lambda_{K}\cup \partial \B=\Lambda_K\cup \Lambda_p$ again as in \cite{BEE}, but this time we compute the wrapped Floer cohomology of $\B$ in the manifold with the punctured solid torus handle attached in terms of data before the attachment. Via string homology of broken strings on the Lagrangian skeleton we find that in the lowest degree the partially wrapped Floer cohomology of $\B$ is given by $\Z[\pi_{1}(\R^{3}\setminus K)]$. This should be compared to the result from \cite{AS} (cf.\ \cite{Ab}) that the wrapped Floer cohomology of a cotangent fiber in $T^*Q$ is isomorphic to the homology of the based loop space, with the pair-of-pants product in wrapped Floer cohomology mapping to the Pontryagin product on chains of loops, see Section \ref{ssec:direct} below for further discussion of this relation.
\end{remark}

\subsection{Wrapped Floer cohomology and the knot complement}\label{ssec:direct}

Here we describe a direct connection between partially wrapped Floer cohomology of $\B$ and $\C$ in $W_K$ and the topology of the loop space of the knot complement, which explains the geometry underlying the isomorphisms in Proposition \ref{prop:triple-isom} and the ring isomorphism $HW^0(\B,\B) \cong \Z[\pi_1(\R^3 \setminus K)]$.

\begin{figure}
\labellist
\small\hair 2pt
\pinlabel ${\color{magenta} \C}$ at 239 335
\pinlabel ${\color{green} \B}$ at  80 226
\pinlabel ${\color{red} L_K}$ at 238 227
\pinlabel ${\color{blue} Q}$ at 143 169
\pinlabel ${\color{orange} M_K}$ at 152 208
\pinlabel $W_K$ at 162 64
\pinlabel ${\color{red} L_K}$ at 543 285
\pinlabel ${\color{blue} Q}$ at 543 241
\pinlabel ${\color{orange} M_K}$ at 543 102
\endlabellist
\centering
\includegraphics[width=0.8\textwidth]{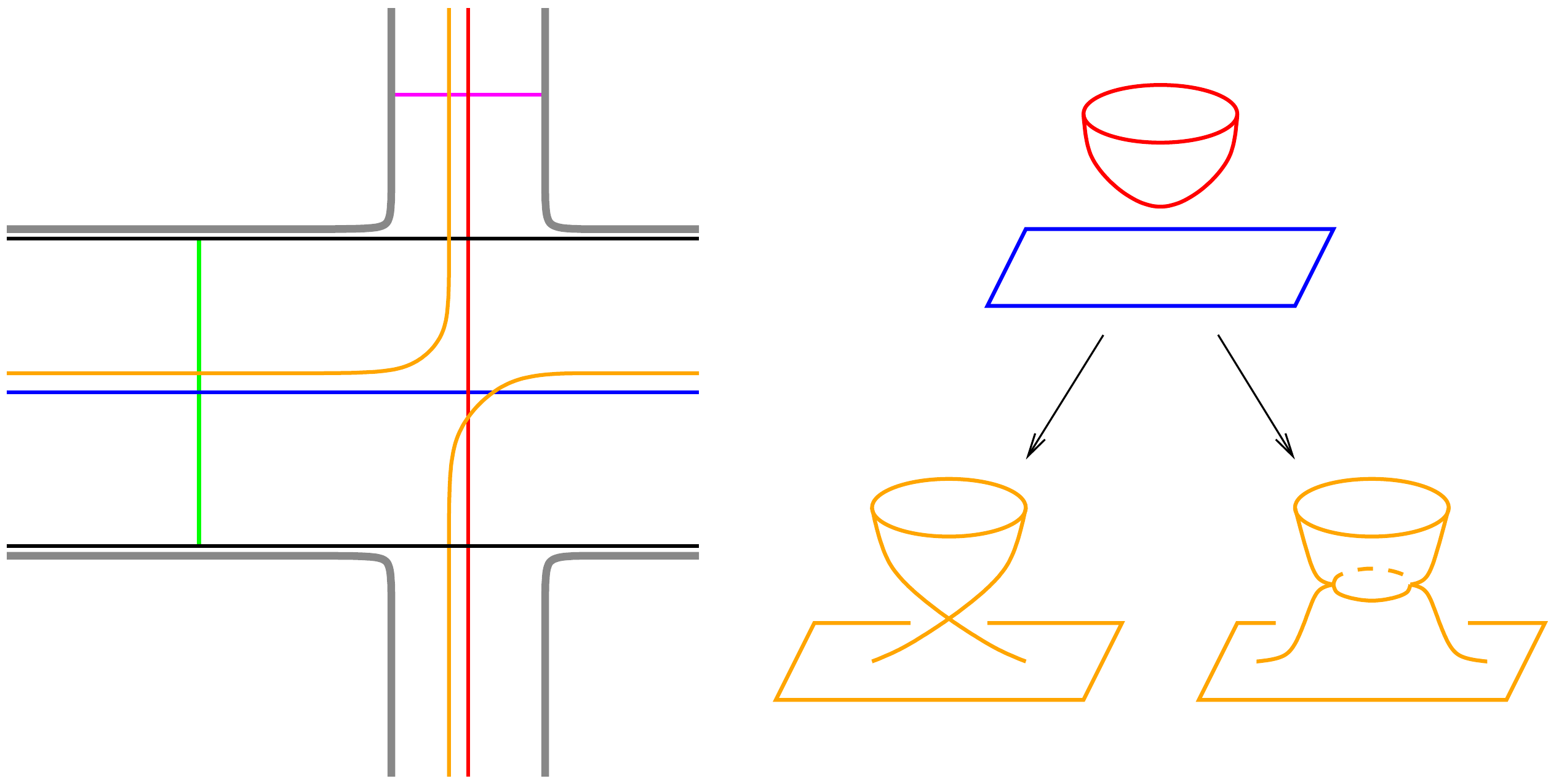}
\caption{
Constructing the Lagrangian $M_K$ from the Lagrangians $Q$ and $L_K$: on the left, depicting $M_K$ within $W_K$; on the right, the two possible Lagrange surgeries.}
\label{fig:wrapped-MK}
\end{figure}

As a first step, following \cite{AENV}, one can construct a Lagrangian submanifold $M_K\subset W_K$ that is diffeomorphic to the knot complement $M_{K}\approx \R^{3}\setminus K$, see Figure~\ref{fig:wrapped-MK}. This is the result of applying Lagrange surgery to $Q \cup L_K$ along the clean intersection $Q \cap L_K = K$.
In fact there are two distinct Lagrange surgeries: in the fiber over a point in $K$ we see two transversely meeting Lagrangian $2$-disks and we can smooth their intersection in two ways, as shown in the right diagram of Figure~\ref{fig:wrapped-MK}. (If the two disks are a local model for the double point of the Whitney sphere in $\R^{4}$, then one smoothing gives the Clifford torus and the other the Chekanov torus.)
Note that $M_K$ intersects each of $\B$ and $\C$ transversely in one point.

In Section~\ref{ssec:wrapped-WK}, we described chain isomorphisms $\Phi$ from the partially wrapped Floer homologies of $\C$ and $\B$ to the DGA $\A_{\Lambda_K \cup \Lambda_p}$. Here we outline geometric counterparts of $\Phi$, mapping into chains of paths on $M_K$ rather than to $\A_{\Lambda_K \cup \Lambda_p}$.

\begin{figure}
\labellist
\small\hair 2pt
\pinlabel $+$ at 59 127
\pinlabel ${\color{green} \B}$ at  280 120
\pinlabel ${\color{green} \B}$ at  377 120
\pinlabel ${\color{orange} M_K}$ at 323 23
\endlabellist
\centering
\includegraphics[width=0.4\textwidth]{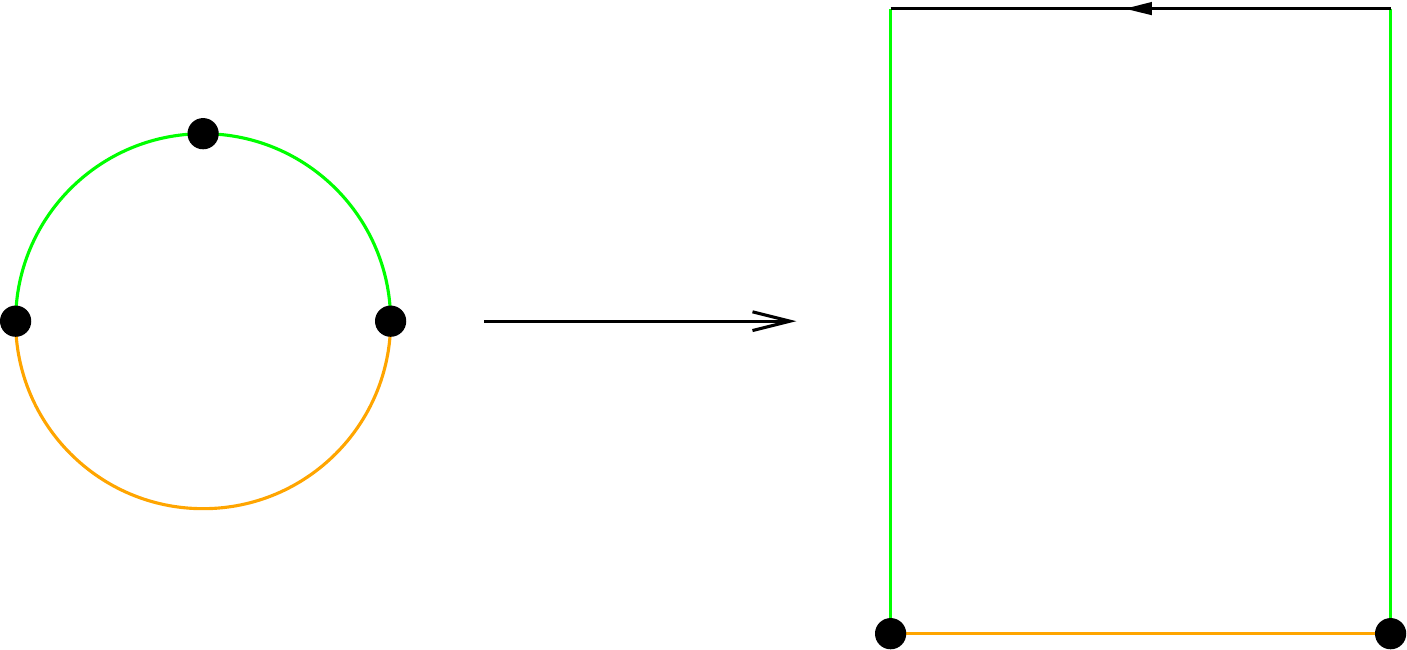}
\caption{
A holomorphic disk with boundary on $\B \cup M_K$, used to define a map from Reeb chords of $\partial \B$ to chains in the loop space of $M_K$.}
\label{fig:wrapped-isom-MK}
\end{figure}

We can construct a map from $CW(\B,\B)$ into chains of based loops in $M_K$ by associating to a Reeb chord of $\B$ the chain of based loops in $M_K$ (based at the point $\B \cap M_K$) carried by the moduli space of holomorphic disks with one positive puncture at the Reeb chord, two Lagrangian intersection punctures at $\B\cap M_K$, and boundary on $\B\cup M_K$, see Figure~\ref{fig:wrapped-isom-MK}. Similarly we have maps from $CW(\C,\C)$, $CW(\C,\B)$, and $CW(\B,\C)$ to chains of loops in $M_K$ (for the first one) or paths in $M_K$ (for the last two) by replacing the $\B$ boundary conditions in Figure~\ref{fig:wrapped-isom-MK} by $\C$ as needed. All four maps are chain maps and induce maps on homology.

Now Theorem~6.15 in \cite{AENV} relates holomorphic disks with positive puncture at a Reeb chord of $\Lambda_K$ and switching boundary conditions on $Q \cup L_K$, via a smoothing procedure, to disks with the same positive puncture but now with boundary on $M_K$. We can combine this with the surgery isomorphism relating Reeb chords of $\partial \C \cup \partial \B$ (the generators of the partially wrapped complexes) to words of Reeb chords of $\Lambda_K \cup \Lambda_p$.
The geometric maps from $CW(\C,\C)$, $CW(\C,\B)$, and $CW(\B,\C)$ to chains in loop/path spaces then descend in homology to maps from $R_{KK} = HW^0(\C,\C)$, $R_{Kp} = HW^0(\C,\B)$, and $R_{pK} = HW^0(\B,\C)$ to $\Z[\pi_1(M_K)]$. These are precisely the isomorphisms in Proposition~\ref{prop:triple-isom} from the KCH-triple to subrings of $\Z[\pi_1(\R^3\setminus K)] = \Z[\pi_1(M_K)]$; furthermore, the map on $CW(\B,\B)$ induces the isomorphism from $\Z\oplus R_{pp} \cong HW^0(\B,\B)$ to the entire group ring from Proposition~\ref{prop:ring-isom}.

Indeed, the algebraic maps constructed in the proof of Proposition~\ref{prop:triple-isom} correspond to the maps from chains of broken strings to chains of paths on $M_K$ determined by \cite[Theorem~6.5]{AENV}. More precisely, the algebraic maps correspond to the disks for which the length functional on the image chain is almost equal to the action of the Reeb chord at the positive puncture. Using the action/length filtration as in \cite{CELN} we then find that the algebraic maps are homotopic to the geometric maps.

The factors of $1-m$ in Proposition~\ref{prop:triple-isom} come from the fact that a holomorphic disk with switching boundary at $Q \cap L_K$ can be smoothed to have boundary on $M_K$ in two different ways, differing in homotopy by a meridian in $M_K$. In this context, we note that the choice of where to place the $1-m$ factors in the isomorphisms from Proposition~\ref{prop:triple-isom}, see Remark~\ref{r:choiceofsmoothing}, corresponds to the choice of Lagrange surgery (Clifford or Chekanov) along $K$.

The geometric chain maps that we have described here have direct analogues for the wrapped Floer cohomology of a cotangent fiber in a closed manifold \cite{AS,Ab}. In particular, the identification of the pair-of-pants product in wrapped Floer cohomology of the fiber in a closed manifold with the Pontryagin product on chains of loops is directly analogous to our identification of the products in $HW^0(\B,\B)$ and $\Z[\pi_{1}(\R^{3}\setminus K)]$.

\subsection{Relation to sheaves and cosheaves}
\label{ssec:sheaves}
This paper has presented a holomorphic-curve approach to the result
that the conormal torus is a complete knot invariant.  This result
was previously established by the methods of microlocal sheaf
theory in \cite{Shende}.  We now explain how these two approaches are
related.

Consider a closed manifold $M$.  Abouzaid has shown on the one hand that
the wrapped Floer cohomology of a cotangent fiber in $T^*M$ is naturally
identified with chains on the based loop space of $M$ \cite{Ab}, and on the other
hand that this cotangent fiber generates the wrapped Fukaya category \cite{abouzaid2011cotangent}.
One way to express these facts simultaneously is to say that the wrapped
Fukaya category of $T^*M$ localizes to a cosheaf of categories over $M$.

To generalize this notion to the cotangent bundle of an open manifold, and
in particular to make sense of what the sections of the above cosheaf should be
over a smaller open set in symplectic-geometric terms, requires considerations
of ambient symplectic spaces more general than standard Liouville domains because of the following.
For a manifold $M$ with non-empty boundary $\partial M$,
the standard Liouville structure on the cotangent bundle (with Liouville vector field $p\cdot\partial_{p}$ pointing radially outwards along fibers)
does not have contact type boundary.  This problem can be resolved in more or less
equivalent ways by either allowing such ``Liouville manifolds with boundary'' as ambient symplectic manifolds (essentially the approach we have taken here), or by taking the more
conventional Liouville structure on $T^*M$ (obtained by rounding the corners over $T^{\ast}_{\partial M}M$) but remembering $\partial M$ as a Legendrian submanifold in the contact boundary and construct
the wrapped Floer theory in such a way that this Legendrian serves to stop the wrapping. We point out that it does not suffice to take the conventional Liouville structure on $T^* M$ alone and forget the Legendrian: with this structure $T^{\ast}M$ is subcritical, the wrapped Fukaya category is trivial, and the Floer cohomology of a cotangent fiber vanishes and can in particular not be identified with chains on the based loop space as in the closed case, cf.\ Remark~\ref{rmk:Rn}.

The setup for cotangent bundles of manifolds with boundary applies in more general situations. Consider a Weinstein manifold $W$ with ideal contact boundary $V$ that contains a possibly singular Legendrian $\Lambda\subset V$. We write $\mathbb{L}\subset W$ for the Lagrangian skeleton, by which in this setting we mean the complement
of all points in $W$ which escape to $V \setminus \Lambda$ under the Liouville flow.
In this context, it has long been expected \cite{kontsevich2009symplectic}
that the Fukaya category defined by stopping wrapping at $\Lambda$ will localize on $\mathbb{L}$.
In the special case that $\mathbb{L}$ is locally a conormal variety
(results of Nadler \cite{Nad} suggest that this can always be achieved after perturbing the
Liouville structure), it is further expected that the resulting cosheaf
is dual to the Kashiwara--Schapira stack which governs microlocal sheaves
\cite{nadler2014fukaya}.  Results along these lines will
soon appear \cite{GPS1, GPS2}, where it will also be shown that in good
circumstances, the wrapped Fukaya category is again
generated by fibers, one in each connected component
of the smooth locus of the skeleton.

We now return to the specific case studied here. In \cite{Shende}, the conormal
torus $\Lambda_K$ of a knot $K\subset \R^{3}$ was studied via the category of sheaves
microsupported in the union of the zero section and the cone over $\Lambda_K$.
This sheaf of categories is known by \cite{kashiwara2013sheaves} to
be the global sections of a sheaf of categories
over the skeleton formed by the union of the zero section and the conormal.
These in turn are modules over the global sections of a corresponding cosheaf that is locally constant away from the singular locus.
It then follows formally that the global sections are generated by
the stalk in each connected component of the smooth locus and the stalk
at the singular locus. The stalk at the smooth locus is just the category of
perfect complexes and a calculation at a singular point of the skeleton
shows that in fact the stalks at the smooth points suffice.

By the above discussion, the sheaf category studied in \cite{Shende} is the category of modules over the wrapped Fukaya category of $T^{\ast}\R^{3}$ with wrapping stopped by $\partial \R^3 \cup \Lambda_K$,
which is generated by two cotangent fibers, one at a point on the
zero section and one at a point on the of the Lagrangian conormal of $K$. These generators are the Lagrangian disks
$\B$ and $\C$ discussed above.

The surgery formula calculates the partially wrapped Floer cohomology (along with the pair-of-pants product) for these Lagrangians $\B$ and $\C$, which generate the Fukaya category of the neighborhood of the Lagrangian skeleton, from holomorphic-curve considerations on the Legendrian attaching locus. In light of the above discussion, the constructions of the previous sections then have a natural interpretation as calculating morphisms in a category equivalent to that studied in \cite{Shende}. Note however that
this connection is not explicitly used in the proof presented here that these morphism spaces determine a complete knot invariant. Rather, in the proof of our main theorem we map the holomorphic curve theories to the string topology of the Lagrangian skeleton. Hence, in a sense, the holomorphic curve theory of the Legendrian conormal allows us to see rather concretely how the Fukaya category of $W_K$ localizes to its Lagrangian skeleton in the case of conormals of knots.

\bibliographystyle{alpha}
\bibliography{biblio}

\end{document}